\title{\bf\Large  On Perelman's $W$-entropy and Shannon entropy power for super Ricci flows on metric measure spaces\\
 }
\author{
Xiang-Dong Li\thanks{Research of X.-D. Li has been supported by National Key R\&D Program of China (No. 2020YF0712700) and NSFC No. 12171458.},  
	}
\date{}
\def\Ric{{\rm Ric}}
\def\<{\langle}
\def\>{\rangle}
\newtheorem{theorem}{Theorem}[section]
\newtheorem{lemma}[theorem]{Lemma}       
\newtheorem{corollary}[theorem]{Corollary}
\newtheorem{remark}[theorem]{Remark}
\newtheorem{definition}[theorem]{Definition}
\begin{document}

\maketitle
\makeatletter 
\renewcommand\theequation{\thesection.\arabic{equation}}
\@addtoreset{equation}{section}
\makeatother 

\vspace{-10mm}
\begin{center}
{\it Dedicated to  Jiahe for his 8th birthday}
\end{center}
\vspace{-2mm}
\begin{abstract} In this paper,  we extend Perelman's $W$-entropy formula and 
the concavity of the Shannon entropy power from smooth Ricci flow to super Ricci flows on metric measure spaces.  Moreover, we prove the Li-Yau-Hamilton-Perelman Harnack inequality on super Ricci flows. As a significant application, we prove 
the equivalence between the 
volume non-local collapsing property and the lower boundedness of the $W$-entropy on RCD$(0, N)$ spaces. 
Finally, we use the $W$-entropy to study the logarithmic Sobolev inequality with optimal constant 
on super Ricci flows on metric measure spaces. 
\end{abstract}

\textbf{Keywords}: Li-Yau-Hamilton-Perelman Harnack inequality, Log-Sobolev inequality, Perelman's $W$-entropy, Shannon entropy power, super Ricci flows, metric measure spaces, volume non-local collapsing property

\vskip 2mm

\textbf{Mathematics Subject Classification}: Primary 53C23, 53C21; Secondary 60J60, 60H30.
\tableofcontents


\section{Introduction}\label{sect1}

In his seminal paper \cite{P1}, Perelman introduced the conjugate heat equation and the $W$-entropy on the closed  Ricci flow. More precisely, let $M$ be an $n$-dimensional closed manifold with a family of Riemannian metrics $\{g(t): t\in [0, T]\}$ and potentials $f\in C^\infty(M\times [0, T], \mathbb{R})$, where $T>0$. Suppose that
 $(g(t), f(t), \tau(t), t\in [0, T])$ is a solution to the evolution
equations
\begin{eqnarray}
\partial_t g=-2Ric, \ \ \partial_t f=-\Delta f+|\nabla
f|^2-R+{n\over 2\tau}, \ \ \partial_t \tau=-1, \label{r-c}
\end{eqnarray}
Following \cite{P1}, the $W$-entropy functional associated to $(\ref{r-c})$ is defined by
\begin{eqnarray}
W(g, f, \tau)=\int_M \left[\tau(R+|\nabla
f|^2)+f-n\right]{e^{-f}\over
 (4\pi\tau)^{n/2}}dv,\label{entropy-1}
 \end{eqnarray}
where $v$ denotes the volume measure on $(M, g)$. By \cite{P1}, the following
entropy formula holds
\begin{eqnarray}
{d\over dt}W(g, f, \tau)=2 \int_M \tau\left\|Ric+\nabla^2
f-{g\over 2\tau}\right\|_{\rm HS}^2{e^{-f}\over (4\pi \tau)^{n/2}}dv.\label{ent-p}
\end{eqnarray}
Here $\|\cdot\|_{\rm HS}$ denotes the Hilbert-Schmidt norm. 
In particular, the $W$-entropy is monotone increasing
in $t$ and the monotonicity is strict except that $M$ is a
shrinking Ricci soliton
\begin{eqnarray*}
Ric+\nabla^2 f={g\over 2\tau}.
\end{eqnarray*}
As an application of the $W$-entropy formula, Perelman \cite{P1} proved the non-local collapsing theorem 
for the Ricci flow  \cite{H2}, which plays an important r\^ole for ruling out cigars, one part of the singularity 
classification for the final resolution of the Poincar\'e conjecture and Thurston's geometrization conjecture 
on three dimensional closed manifolds. See also \cite{CZ, CLN, 
Kleiner-Lott, MT}.

Inspired by Perelman's groundbreaking contributions to the study of $W$-entropy formula, many authors have extended the $W$-entropy formula to various geometric flows. In \cite{N1, N2}, Ni derived the  $W$-entropy for the heat equation on  complete Riemannian manifolds. More precisely, let $(M, g)$ be a complete Riemannian manifold, and 

\begin{equation*}
	u(x, t)=\frac{e^{-f(x, t)}}{(4\pi t)^{\frac{n}{2}}}
\end{equation*}
be a positive solution to the  heat equation
\begin{equation*}
	\partial_t u=\Delta u \label{Heat1}
\end{equation*}
with $\int_M u(x,0)dv=1$. Define the $W$-entropy by
\begin{equation}\label{Wentopy}
	W(f, t):=\int_M \left( t|\nabla f |^2+f-n\right)\frac{e^{-f(x, t)}}{(4\pi t)^{\frac{n}{2}}} dv.
\end{equation}
Then the following $W$-entropy formula holds
\begin{equation*}
	\frac{d}{d t}W(f, t)=-2t\int_M \left(\Big\|\nabla^2 f-\frac{g}{2t} \Big\|_{\operatorname{HS}}^2
	+\Ric(\nabla f,\nabla f)\right)u dv. \label{WNi}
\end{equation*}
In particular,  the $W$-entropy is non-increasing when $\Ric\geq 0$. See Li-Xu \cite{LiXu} for the
extension of Ni's $W$-entropy formula for the heat equation $\partial_t u=\Delta u$ to complete Riemannian manifolds with $Ric\geq K$, $K\in \mathbb{R}$. 

In \cite{Li2012}, the author of this paper extended Perelman and Ni's $W$-entropy formulas to the heat equation of the Witten Laplacian on complete Riemannian manifolds with bounded geometry condition. 
More precisely, let $(M, g)$ be a complete Riemannian manifold with bounded geometric condition \footnote{Here, we say that $(M, g)$ satisfies the bounded geometry condition if the Riemannian curvature tensor $\mathrm{Riem}$ and its covariant derivatives $\nabla^k \mathrm{Riem}$ are uniformly bounded on $M$ for 
$k = 1, 2, 3$.}, $\phi\in C^4(M)$ with $\nabla \phi\in C_b^k(M)$ for $k=1, 2, 3$, let 
  \begin{equation*}
	u(x, t)=\frac{e^{-f(x, t)}}{(4\pi t)^{\frac{m}{2}}}
\end{equation*}
be a positive and smooth solution to the heat equation
\begin{equation}
	\partial_t u=Lu
	 \label{Heat2}
\end{equation}
with $\int_M u(x, 0)d\mu(x)=1`$. 
Define the $W$-entropy by
\begin{equation}\label{Wentopy}
	W_m(f, t):=\int_M \left( t|\nabla f |^2+f-m\right)\frac{e^{-f(x, t)}}{(4\pi t)^{\frac{m}{2}}} d\mu.
\end{equation}
Then the  following $W$-entropy formula holds
\begin{eqnarray}
	\frac{d}{dt}W_m(f, t)&=&-2t\int_M \left(\Big\|\nabla^2 f-\frac{g}{2t} \Big\|_{\operatorname{HS}}^2
	+\Ric_{m, n}(L)(\nabla f,\nabla f)\right)u d\mu \nonumber\\
	& &\hskip1cm -{2t\over m-n}\int_M\Big|\nabla \phi\cdot \nabla f-{m-n\over 2t}\Big|^2 u d\mu, \label{WLi}
\end{eqnarray}
In particular,  ${d\over dt}W_m(u(t))\leq 0$ (i.e., the $W$-entropy is non-increasing) on $[0, \infty)$ when $\Ric_{m, n}(L)\geq 0$. Moreover, under the assumption $Ric_{m, n}(L)\geq 0$, ${d\over dt}W_m(u(t))=0$  holds at some $t=t_0>0$ if and only if $(M, g)$ is isometric to the Euclidean space $\mathbb{R}^n$, $m=n$ and $V$ is identically a constant. This can be regarded as the rigidity theorem for the $W$-entropy on complete Riemannian manifolds 
 with $\Ric_{m, n}(L)\geq 0$. 

In \cite{LiLi2015PJM}, S. Li and the author of this paper gave  an alternative proof of $(\ref{WLi})$ using Ni's $W$-entropy formula $(\ref{WNi})$ and a warped product metric on $\mathcal{M}=M\times \mathbb{R}^{m-n}$ for when $m\in \mathbb{N}$ and $m>n$.   In a series of subsequently papers with  S. Li \cite{LiLi2015PJM, LiLi2018JFA, LiLi2018SCM, LiLi2020JGA}, we extended the $W$-entropy formula to the heat equation $\partial_t u=Lu$ associated with the 
Witten Laplacian on a  complete Riemannian manifolds satisfying the CD$(K, m)$-condition, 
i.e., $Ric_{m, n}(L)\geq K$, $K\in \mathbb{R}$ and $m\in [n, \infty]$. Moreover, we further extended 
Perelman's $W$-entropy formula to the heat equation $\partial_t u=Lu$ associated with the time-dependent 
Witten Laplacian on a  family of complete Riemannian manifolds $(M, g_t, \phi_t)$ with the so-called 
$(K, m)$-super Ricci flows. Very recently, S. Li and the author \cite{LiLi2015PJM} proved the $K$-concavity of the Shannon entropy power on complete Riemannian manifolds with $\Ric_{m, n}(L)\geq K$ and  on compact 
$(K, m)$-super flows.

It is natural to ask an interesting question whether one can  extend the monotonicity of $W$-entropy to 
 more singular spaces than smooth Riemannian manifolds. 
 In \cite{KL}. Kuwada and the  author of this paper proved 
 the monotonicity of $W$-entropy on the so-called RCD$(0, N)$ spaces and provided the associated rigidity results. For its precise definition and statement, see Section 3 and Section 4 below. As far as we know, this is the first result on the $W$-entropy and related topics on RCD spaces. Motivated by very  increasing interest of the study on the geometry and analysis on RCD spaces, it is natural and interesting to ask a question whether one can extend Kuwada-Li's and S.Li-Li's results to RCD$(K, N)$ spaces. This have been done in a recent paper by the author with his PhD student Enrui Zhang \cite{Li-Zhang2025}. See also an independent work by M. Brena \cite{Brena2025}.

The purpose of this paper is to study the $W$-entropy associated with the 
heat equation on the so-called $(K, N)$ or $(K, n, N)$-super Ricci flows on metric measure spaces. 
The main results of this paper extend the $W$-entropy formula and the monotonicity of the $W$-entropy from smooth Ricci flow, smooth $(K, m)$-super Ricci flows to the so-called $(K, N)$-super Ricci flow and $(K, n, N)$-super Ricci flows on metric measure spaces. To avoid the length of the Introduction part to be too long, we will introduce the notion of $(K, N)$-super Ricci flows and $(K, n, N)$-super Ricci flows on metric measure spaces and then state our main results in Section 4 below. 
 
The structure of this paper is as follows: 
In Section 2, for the convenience of the readers, we briefly review the $W$-entropy formulas on smooth $(K, m)$-super Ricci flows. In Section 3, we briefly recall some basic notions of RCD spaces and Sturm's  $(K, N)$ super Ricci flows on mm spaces, then we introduce the notion of $(K, n, N)$-super Ricci flows on mm spaces. In Section4, we first state the $H$-entropy dissipation formulas and then state main results of this paper. 
In Section 5, we prove the main theorems  of this paper. In Section 6, we prove the concavity of the Shannon entropy power and the related logarithmic entropy formula on closed super Ricci flows on mm spaces.  
In Section 7, we prove the Li-Yau-Hamilton-Perelman Harnack inequality on super Ricci flows.
As a significant application, we prove in Section 8 the equivalence between the 
volume non-local collapsing property and the lower boundedness of the $W$-entropy on 
RCD$(0, N)$ spaces. In Section 9, we use the $W$-entropy to study the logarithmic Sobolev inequality with optimal constant 
on super Ricci flows on metric measure spaces and raise a problem for the study in the future.

 In a forthcoming paper which is stilll in preparation, we will extend the $W$-entropy formula and Bakry-Ledoux's version of the L\'evy-Gromov isoperimetric inequality to the $(K, \infty)$-super Ricci flows on metric measure spaces.

\medskip

\noindent{\bf Acknowledgement}. The authors of this paper would like to express their gratitudes to Prof. Banxian Han, Dr. Songzi Li, Dr. Yuzhao Wang and M. Enrui Zhang for helpful discussions during the preparation of this paper. The author would like also to thank Prof. K.-T. Sturm for valuable discussion on super Ricci flows on metric measure spaces many years ago.

\section{$W$-entropy formulas on smooth super Ricci flows}

\subsection{Smooth $(K, m)$-super Ricci flows}

In the setting of smooth Riemannian manifolds, the notion of $(K, m)$-super Ricci flows has been independently introduced by S. Li and the author of this paper in our preprint (arxiv:1303.6019,  submitted on 25 Mar 2013) and 2015 published article \cite{LiLi2015PJM}. See also subsequent articles \cite{LiLi2018JFA, LiLi2018SCM, LiLi2020JGA}.

More precisely, let $(M, g_t, \phi_t, t\in [0, T])$ is a time-dependent weighted, $n$-dimensional Riemannian manifold $(X, g_t)$ with weighted volume measures $d\mu_t=e^{-\phi_t}dv_t$, and let the operator  $L_t$ be the time dependent Witten Laplacian on $(M, g_t, \phi_t)$ given by
$$L_t=\Delta_t-\nabla_t \phi_t\cdot\nabla_t,$$
where $dv_t=\sqrt{{\rm det} g_t(x)}dx$ is the standard Riemannian volume measure on $(M, g_t)$, $\nabla_t$ denotes the Levi-Civita covariant derivative on $(M, g_t)$ and $\Delta_t={\rm Tr}\nabla^2_t$ is the Laplace-Beltrami operator on $(M, g_t)$. Then $(M, g_t, \phi_t)$ is a $(K, m)$-super-Ricci flow for $N\geq n$ if and only if 
$$
{1\over 2}{\partial g_t\over \partial t}+ Ric_{m, n}(L_t)\geq Kg_t$$
where

$$
Ric_{m, n}(L_t):=Ric_{g_t}+\nabla^2 \phi_t-{\nabla \phi_t\otimes \nabla \phi_t\over m-n}$$
is the $m$-dimensional Bakry-Emery Ricci curvature associated with the time dependent Witten Laplacian $L_t$ on $(M, g_t, \phi_t)$. Note that,  $(M, g_t, \phi_t)$ is a super-$(K, m)$-Ricci flow for $m= n$ if and only if $\phi_t$ is constant in $x\in X$ for any fixed $t\in [0,. T]$.

In a series of papers \cite{LiLi2015PJM, LiLi2018JFA, LiLi2018SCM, LiLi2020JGA}, S. Li and the author of this paper extended the $W$-entropy formula to the heat equation $\partial_t u=Lu$ associated with the 
Witten Laplacian on a  complete Riemannian manifold satisfying the CD$(K, m)$-condition, 
i.e., $Ric_{m, n}(L)\geq K$, where $K\in \mathbb{R}$ and $m\in [n, \infty]$. Moreover, we further extended 
Perelman's $W$-entropy formula to the heat equation $\partial_t u=Lu$ associated with the time-dependent 
Witten Laplacian on a  family of complete Riemannian manifolds $(M, g_t, \phi_t)$ with the so-called 
$(K, m)$-super Ricci flow. 
In this section, for the convenience of the readers, we briefly review  the $W$-entropy formulas for the heat equation of the time-dependent Witten Laplacian on compact or complete $(K, m)$-super Ricci flows.

\subsection{$W$-entropy for  $(0, m)$-super Ricci flow}

In \cite{LiLi2015PJM}, S. Li and the author of this paper proved the $W$-entropy formula to the heat equation associated with  the time dependent Witten Laplacian on compact manifolds equipped with a $(0, m)$-super Ricci flow, which 
can be regarded as the $m$-dimensional analogue of Perelman's $W$-entropy formula for the Ricci flow.

\begin{theorem}\label{ThB} (\cite{LiLi2015PJM})  Let $(M, g(t), \phi(t), t\in [0, T])$ be a compact manifold with family of time dependent metrics and $C^2$-potentials. Suppose that $g(t)$ and $\phi(t)$
satisfy the conjugate equation 
\begin{eqnarray}\label{conjugate}
\frac{\partial \phi_t} {\partial t}={1\over 2}{\rm Tr}\left(
\frac{\partial g}{\partial t}\right).
\end{eqnarray}
Let  $u={e^{-f}\over (4\pi t)^{m/2}}$ be a positive and smooth solution of the heat equation
\begin{eqnarray*}
\partial_t u = L_tu
\end{eqnarray*}
with initial data $u(0)$ satisfying $\int_M
u(0)d\mu(0)=1$.
Let
\begin{eqnarray*}
H_m(u, t)=-\int_M u\log u d\mu-{m\over 2}(1+\log(4\pi t)).
\end{eqnarray*}
Define
\begin{eqnarray*}
W_m(u, t)={d\over dt}(tH_m(u)).
\end{eqnarray*}
Then
\begin{eqnarray*}
W_m(u, t)=\int_M \left[t|\nabla f|^2+f-m\right]ud\mu,
\end{eqnarray*}
and
\begin{eqnarray}
& &{d\over dt}W_m(u, t)=-2t\int_M \left\|\nabla^2 f-{g\over 2t}\right\|_{\rm HS}^2ud\mu-{2t\over m-n}\int_M \left(\nabla \phi\cdot \nabla f+{m-n\over 2t}\right)^2  ud\mu\nonumber\\
& &\hskip4cm -2t\int_M \left({1\over 2}{\partial g\over \partial t}+Ric_{m, n}(L)\right)(\nabla f, \nabla f)ud\mu.\label{NW}
\end{eqnarray}
In particular, if $\{g(t), \phi(t), t\in (0, T]\}$ is a $(0, m)$-super Ricci flow and satisfies the conjugate equation $(\ref{conjugate})$, then $W_m(u, t)$ is decreasing in $t\in (0, T]$, i.e.,
\begin{eqnarray*}
{d\over dt}W_m(u, t)\leq 0, \ \ \ \forall t\in (0, T].
\end{eqnarray*}
Moreover, the left hand side in $(\ref{NW})$ identically  equals to zero  on $(0, T]$ if and only if $(M, g(t), \phi(t), t\in (0, T])$ is a $(0, m)$-Ricci flow in the sense that
\begin{eqnarray*}
{\partial g\over \partial t}&=&-2{\rm Ric}_{m,n}(L),\\
{\partial \phi\over \partial t}&=&{1\over 2} {\rm Tr}\left( {\partial g\over \partial t}\right).
\end{eqnarray*}

\end{theorem}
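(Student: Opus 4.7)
The plan is to follow the Perelman--Ni strategy, adapted to the weighted, time-dependent setting imposed by the conjugate equation $(\ref{conjugate})$. The first task is to rewrite $H_m(u,t)$ in closed form. Substituting $u=e^{-f}/(4\pi t)^{m/2}$ into the definition of $H_m$ and noting that the conjugate equation was tailored precisely so that $\partial_t(d\mu_t)\equiv 0$ pointwise (whence $\int_M u\,d\mu_t\equiv 1$ is preserved), one finds $H_m(u,t)=\int_M fu\,d\mu-\frac{m}{2}$. From $u=e^{-f}/(4\pi t)^{m/2}$ and $L_t u=u(|\nabla f|^2-L_t f)$, the pointwise evolution $\partial_t f=L_t f-|\nabla f|^2-\frac{m}{2t}$ follows, and then self-adjointness of $L_t$ in $L^2(d\mu_t)$ together with $\nabla u=-u\nabla f$ gives $\frac{d}{dt}\int_M fu\,d\mu=\int_M |\nabla f|^2 u\,d\mu-\frac{m}{2t}$. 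Computing $\frac{d}{dt}(tH_m)$ then yields the claimed formula $W_m(u,t)=\int_M [t|\nabla f|^2+f-m]\,u\,d\mu$.

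For the dissipation identity $(\ref{NW})$, I would rewrite $W_m=\int_M P\,u\,d\mu$ with the Perelman-type integrand $P=2tL_t f-t|\nabla f|^2+f-m$ (equality with the previous expression follows by one integration by parts using $\nabla u=-u\nabla f$). Because $d\mu_t$ is stationary and $L_t$ is self-adjoint in $L^2(d\mu_t)$, the derivative reduces to $\frac{d}{dt}W_m=\int_M(\partial_t P+L_t P)\,u\,d\mu$, so the whole identity comes down to an explicit computation of $\partial_t P+L_t P$ on a time-dependent weighted manifold. Three ingredients drive this computation: the Bochner formula for the Witten Laplacian, $\frac{1}{2}L_t|\nabla f|^2-\<\nabla L_t f,\nabla f\>=\|\nabla^2 f\|_{\rm HS}^2+(\Ric_t+\nabla_t^2\phi_t)(\nabla f,\nabla f)$; the pointwise identity $\Ric_t+\nabla_t^2\phi_t=\Ric_{m,n}(L_t)+\frac{1}{m-n}\,\nabla\phi_t\otimes\nabla\phi_t$; and the commutator formula $\partial_t|\nabla f|^2=-(\partial_t g)(\nabla f,\nabla f)+2\<\nabla f,\nabla\partial_t f\>$ together with the analogous identity for $\partial_t L_t$ (whose trace part is absorbed by the conjugate equation $(\ref{conjugate})$), which collectively produce the extra $\frac{1}{2}(\partial_t g)(\nabla f,\nabla f)$ contribution inside the curvature bracket.

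The final step is an algebraic completion of squares which recognises the resulting integrand as $-2t$ times the right-hand side of $(\ref{NW})$: the constant $m/(4t^2)$ splits as $n/(4t^2)$ from $\|g/(2t)\|_{\rm HS}^2$ together with $(m-n)/(4t^2)$ from the $(m-n)$-term, while the cross terms in the two completed squares cancel precisely the $L_t f/t$ and $(\nabla\phi\cdot\nabla f)/t$ contributions coming from the Bochner expansion. Monotonicity under a $(0,m)$-super Ricci flow is then immediate, since each of the three summands is nonnegative. Rigidity follows by reading off the three vanishing conditions separately: $\nabla^2 f=g/(2t)$, $\nabla\phi\cdot\nabla f=-(m-n)/(2t)$, and saturation of the super Ricci flow inequality to an equality; combined with the conjugate equation $(\ref{conjugate})$ these assemble exactly into the $(0,m)$-Ricci flow system stated in the theorem.

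The main obstacle is the careful bookkeeping of the commutators $[\partial_t,\nabla]$ and $[\partial_t,L_t]$ on a time-dependent weighted manifold, where the contribution of $\partial_t g$ to $\partial_t(\Delta_t f)$ must be tracked consistently with the contribution of $\partial_t\phi$ to $\partial_t(\nabla\phi\cdot\nabla f)$; the conjugate equation $(\ref{conjugate})$ is precisely what guarantees that the trace-type remainders cancel cleanly. An elegant alternative, valid when $m-n$ is a positive integer, is the warped-product lift $\mathcal M=M\times\mathbb R^{m-n}$ with metric $\tilde g_t=g_t\oplus e^{-2\phi_t/(m-n)}g_{{\rm Eucl}}$ used in \cite{LiLi2015PJM}: this converts the problem into a standard $(0,m)$-super Ricci flow for the Laplace--Beltrami operator on $\mathcal M$, where the dissipation identity reduces to Ni's formula $(\ref{WNi})$ applied to the lifted heat flow.
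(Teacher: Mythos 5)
Your proposal is correct, and all the individual computations I checked (the closed form $H_m=\int_M fu\,d\mu-\tfrac m2$, the evolution $\partial_t f=L_tf-|\nabla f|^2-\tfrac m{2t}$, the identity $\int_M P\,u\,d\mu=\int_M[t|\nabla f|^2+f-m]u\,d\mu$ for $P=2tL_tf-t|\nabla f|^2+f-m$, and the reduction $\tfrac{d}{dt}W_m=\int_M(\partial_tP+L_tP)u\,d\mu$) are sound. The route is, however, organized differently from the proof the paper actually carries out for this family of results. The paper first establishes the integrated entropy dissipation identities $H'=\int|\nabla\log u|^2u\,d\mu$ and $H''=-2\int[\,\cdot\,]u\,d\mu$ (Theorem \ref{Th-B1}), writes $\tfrac{d}{dt}W_m=tH''+2H'-\tfrac m{2t}$, and only then applies the Bochner formula and the splitting $L\log u={\rm Tr}\nabla^2\log u+(L-{\rm Tr}\nabla^2)\log u$ under the integral sign to complete the three squares. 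The advantage of that order of operations is that the only time-derivative of a differential operator ever needed is $\partial_t|\nabla f|^2=-(\partial_tg)(\nabla f,\nabla f)+2\langle\nabla f,\nabla f_t\rangle$; the full commutator $[\partial_t,L_t]$, with its ${\rm div}\,h$ and $\langle h,\nabla^2f\rangle$ terms, never appears. Your pointwise evolution of $P$ is instead the machinery the paper develops separately in its Harnack section (the quantities $w$, $w_m=tw+f-m$, $\nu_H=w_mH$ and the computation of $(\partial_t-L)w_m$), and it does force you through the commutator formula $[\partial_t,L]f=-2\langle h,\nabla^2f\rangle+2h(\nabla\phi,\nabla f)-\langle 2\,{\rm div}h-\nabla{\rm Tr}_gh+\nabla\partial_t\phi,\nabla f\rangle$; you should be aware that even after using the conjugate equation to cancel $-\nabla{\rm Tr}_gh+\nabla\partial_t\phi$, the residual terms do \emph{not} vanish pointwise — they only collapse to $-(\partial_tg)(\nabla f,\nabla f)$ after an integration by parts against $u\,d\mu$ relating $\int\langle{\rm div}h,\nabla f\rangle u\,d\mu$ to $\int\langle h,\nabla^2f\rangle u\,d\mu$ and $\int h(\nabla\phi,\nabla f)u\,d\mu$. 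Your write-up correctly flags this bookkeeping as the main obstacle, but slightly understates it by saying the trace part is "absorbed" by the conjugate equation. What your route buys in exchange is the pointwise Harnack quantity and the inequality $(\partial_t-L)\nu_H\le 0$ essentially for free, which the paper obtains only later and by a separate argument; the warped-product lift you mention as an alternative is likewise acknowledged in the paper as the second proof from \cite{LiLi2015PJM}, valid when $m-n\in\mathbb{N}$.
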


\subsection{$W$-entropy for $(K, m)$-super Ricci flow}

In general we have the following result which extends Theorem \ref{ThB} to $(K, m)$-super Ricci flow for general $K\in \mathbb{R}$ and $m\in [n, \infty)$.

\begin{theorem}\label{Th-E} (\cite{LiLi2015PJM, LiLi2018JFA, LiLi2018SCM}) Let $(M, g(t), \phi(t), t\in [0, T])$ be a compact manifold with family of time dependent metrics and $C^2$-potentials. Suppose that $g(t)$ and $\phi(t)$
satisfy the conjugate equation $(\ref{conjugate})$. Let $u$ be a positive and smooth solution to the heat
equation $\partial_t u=L_tu$. Define 
\begin{eqnarray}
H_{m,K}(u, t)=-\int_M u\log u d\mu-{m\over 2}(1+\log(4\pi t))-\frac m2Kt\Big(1+\frac16Kt\Big),  \label{HmK}
\end{eqnarray}
and
\begin{align}
W_{m,K}(u, t)={d\over dt}(tH_{m,K}(u)). \label{WmK}
\end{align}
Then
\begin{eqnarray*}
W_{m,K}(u, t)=\int_M\left[t|\nabla f |^2+f-m\Big(1+\frac12Kt\Big)^2\right]ud\mu,\label{WmK-0}
\end{eqnarray*}
and
\begin{align}\label{WMK}
{d\over dt}W_{m,K}(u, t)&=-2 t\int_M\Big\|\nabla^2 f-\left(\frac1{2t}+\frac{K}{2}\right)g\Big\|_{\rm HS}^2u d\mu\nonumber\\
&\hskip1cm -\frac{2t}{m-n}\int_M\left(\nabla \phi\cdot\nabla f+(m-n)\Big(\frac1{2t}+\frac K2\Big)\right)^2u d\mu\nonumber\\
&\hskip1.5cm -2 t\int_M\left({1\over 2}{\partial g\over \partial t}+{\rm Ric}_{m,n}(L)+Kg\right)(\nabla f, \nabla f) ud\mu.
\end{align}
In particular, if $(M, g(t), \phi(t), t\in (0, T])$ is a $(-K, m)$-super Ricci flow and satisfies the conjugate equation $(\ref{conjugate})$, then $W_{m,K}(u, t)$ is decreasing in $t\in (0, T]$, i.e.,
\begin{eqnarray*}
{d\over dt}W_{m,K}(u, t)\leq 0, \ \ \ \forall t\in (0, T].
\end{eqnarray*}
Moreover, the left hand side in $(\ref{WMK})$ identically  equals to zero on $(0, T]$ if and only if $(M, g(t), \phi(t), t\in (0, T])$ is a $(-K, m)$-Ricci flow in the sense that
\begin{eqnarray*}
{\partial g\over \partial t}&=&-2({\rm Ric}_{m,n}(L)+Kg),\\
{\partial \phi\over \partial t}&=&{1\over 2} {\rm Tr}\left( {\partial g\over \partial t}\right).
\end{eqnarray*}
\end{theorem}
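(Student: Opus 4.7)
The plan is to follow the strategy the authors have already used for Theorem \ref{ThB}, treating the $K$-corrections in $H_{m,K}$ and $W_{m,K}$ as carefully chosen ``completing-the-square'' terms that make the Bochner identity produce the model shrinker $\nabla^2 f = \left(\frac{1}{2t}+\frac{K}{2}\right)g$ instead of $\nabla^2 f = \frac{g}{2t}$. First I would verify the explicit expression for $W_{m,K}$. Writing $u=(4\pi t)^{-m/2}e^{-f}$ gives $-\int u\log u\,d\mu = \int uf\,d\mu+\frac{m}{2}\log(4\pi t)$, whence
\begin{equation*}
H_{m,K}(u,t) = \int_M uf\,d\mu-\frac{m}{2}-\frac{m}{2}Kt\Big(1+\tfrac{1}{6}Kt\Big).
\end{equation*}
The conjugate equation (\ref{conjugate}) keeps $\int u\,d\mu_t\equiv 1$ and, combined with $\partial_t u=L_tu$ and integration by parts, yields $\frac{d}{dt}\bigl(-\!\int u\log u\,d\mu_t\bigr)=\int|\nabla_tf|^2 u\,d\mu_t$. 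Substituting into $W_{m,K}=H_{m,K}+t\frac{d}{dt}H_{m,K}$ and expanding $1+Kt+\tfrac{1}{4}K^2t^2=\bigl(1+\tfrac{Kt}{2}\bigr)^2$ delivers the stated formula for $W_{m,K}(u,t)$.

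Next I would compute $\frac{d}{dt}W_{m,K}$ directly. The ingredients are: the evolution $\partial_t f = L_tf-|\nabla_tf|^2-\frac{m}{2t}$ derived from $\partial_t u=L_tu$; the extra contribution $-\frac{1}{2}(\partial_t g)(\nabla f,\nabla f)$ arising from $\partial_t|\nabla_tf|^2$; and the measure-evolution term, which vanishes when tested against scalars by (\ref{conjugate}). Plugging in and integrating by parts in the heat-type $\langle\nabla L_tf,\nabla f\rangle$ term produces $-\int (L_tf)^2 u\,d\mu$ plus the drift correction, while the Witten--Bochner--Weitzenb\"ock identity
\begin{equation*}
\tfrac12 L_t|\nabla f|^2 = \|\nabla^2f\|_{\rm HS}^2+\langle\nabla L_tf,\nabla f\rangle+\mathrm{Ric}_\infty(L_t)(\nabla f,\nabla f)
\end{equation*}
rewrites the remaining Hessian term. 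The key algebraic manipulations are then: completing the square
\begin{equation*}
\|\nabla^2f\|_{\rm HS}^2-\tfrac{2}{n}\big(\mathrm{tr}\,\nabla^2 f\big)\big(\tfrac{1}{2t}+\tfrac{K}{2}\big)+\cdots=\Big\|\nabla^2f-\big(\tfrac{1}{2t}+\tfrac{K}{2}\big)g\Big\|_{\rm HS}^2+\cdots,
\end{equation*}
using $\mathrm{Ric}_\infty(L)=\mathrm{Ric}_{m,n}(L)+\frac{\nabla\phi\otimes\nabla\phi}{m-n}$, and completing the square in the drift direction to turn $\frac{(\nabla\phi\cdot\nabla f)^2}{m-n}$ plus the $\bigl(\frac{1}{2t}+\frac{K}{2}\bigr)$--linear cross-term into $\frac{1}{m-n}\bigl(\nabla\phi\cdot\nabla f+(m-n)(\frac{1}{2t}+\frac{K}{2})\bigr)^2$. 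The constants from these completions are exactly $-n\bigl(\frac{1}{2t}+\frac{K}{2}\bigr)^2-(m-n)\bigl(\frac{1}{2t}+\frac{K}{2}\bigr)^2=-m\bigl(\frac{1}{2t}+\frac{K}{2}\bigr)^2$, which by design cancel against the time-derivative of $m(1+\frac{Kt}{2})^2$ coming from the bulk of $W_{m,K}$; this cancellation is the whole reason for the specific $K$-polynomial in (\ref{HmK}). Finally, the $\partial_tg$ term from $\partial_t|\nabla_tf|^2$ combines with $\mathrm{Ric}_{m,n}(L)$ and the $Kg$ contribution just produced to form the curvature expression $\frac{1}{2}\partial_tg+\mathrm{Ric}_{m,n}(L)+Kg$ in (\ref{WMK}).

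The monotonicity is then immediate: the first two integrands in (\ref{WMK}) are manifestly nonpositive, and the third is nonpositive exactly under the $(-K,m)$-super Ricci flow condition $\frac{1}{2}\partial_t g+\mathrm{Ric}_{m,n}(L)\geq -Kg$. For the rigidity statement, $\frac{d}{dt}W_{m,K}\equiv 0$ on $(0,T]$ forces all three square terms to vanish pointwise in $\mathrm{supp}(u)$; since $u>0$ on $M$ by parabolic maximum principle, vanishing of the third term in the direction of $\nabla f$ must be upgraded to tensorial vanishing by the standard trick of varying the initial datum $u(0,\cdot)$ so that $\nabla f$ can point in any direction at any fixed base-point. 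Combined with the conjugate equation, this gives the full $(-K,m)$-Ricci flow equations.

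The main obstacle is the careful bookkeeping in the second paragraph: correctly tracking the $\partial_t g_t$ and $\partial_t\phi_t$ contributions (which enter both through $\partial_t|\nabla_tf|^2$ and through the evolving measure), and verifying that the $K$-dependent polynomial in the definition of $H_{m,K}$ produces exactly the constants needed so that the three completed-square terms appear with the right shift $\frac{1}{2t}+\frac{K}{2}$. An alternative, cleaner route, when $m-n\in\mathbb{N}$, is the warped-product reduction described in \cite{LiLi2015PJM}, which lifts $(M,g_t,\phi_t)$ to $\mathcal{M}=M\times N^{m-n}$ so that the Witten Laplacian becomes the genuine Laplacian and $\mathrm{Ric}_{m,n}(L_t)$ the genuine Ricci tensor on $\mathcal{M}$; after applying the smooth $(0,m+(m-n))$-case with a shifted metric $\tilde g_t=e^{-Kt}g_t$ (absorbing $K$ into a time reparametrization), the $(K,m)$-result follows by pulling back, thereby bypassing most of the algebraic bookkeeping.
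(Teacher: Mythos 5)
Your proposal is correct and follows essentially the same route the paper takes: the paper cites this theorem from \cite{LiLi2015PJM, LiLi2018JFA, LiLi2018SCM} and reproduces exactly this argument (entropy dissipation identities for $H$, the Witten--Bochner formula, the splitting $L\log u={\rm Tr}\nabla^2\log u+(L-{\rm Tr}\nabla^2)\log u$, and the two completions of squares whose constants cancel against the derivative of the $m(1+\tfrac{1}{2}Kt)^2$ term) when proving the metric-measure generalizations in Section 5.3. Your verification of the closed form of $W_{m,K}$, the identification of the $K$-polynomial in $H_{m,K}$ as the source of the shift $\tfrac{1}{2t}+\tfrac{K}{2}$, and the remark on upgrading the directional vanishing in the rigidity case all match the paper's treatment.
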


For the $W$-entropy formula   for the time dependent
Witten Laplacian on compact Riemannian manifolds with $(K, \infty)$-super Ricci flow, see S. Li-Li \cite{LiLi2018SCM, LiLi2020JGA}. 

\section{Super Ricci flows on metric measure  spaces}\label{sect2}

\subsection{Basic facts about RCD spaces}

According to \cite{Gigili2015, Gigili2015MAMS, BG}, an RCD$(K , N )$ space is an infinitesimally Hilbertian 
metric measure space $(X, d, m)$ satisfying a lower Ricci curvature bound and an upper dimension bound (meaningful if $N<\infty$) in a synthetic sense according to \cite{LV2009, Sturm2006}. For the convenience of the readers, we briefly recall its definition and basic facts.

 Let $(X, d, \mu)$ be a metric measure space, which means that $(X, d)$ is 
a complete and separable metric space and $\mu$ is a locally finite measure.
Locally finite means that for all $x \in X$ , there is $r>0$ such that 
$\mu\left(B_r(x)\right)<\infty$ and $\mu$ is a  $\sigma$-finite Borel measure on $X$, where $B_r(x)=\{y\in X, d(x, y)<r\}$.

 Let  $P_2(X, d)$ be the $L^2$-Wasserstein space over $(X, d)$, i.e. the set of all Borel probability measures $\mu$ satisfying
$$
\int_X d\left(x_0, x\right)^2 \mu(\mathrm{d} x)<\infty,
$$
where $x_0 \in X$ is a (and hence any) fixed point in $M$. The $L^2$-Wasserstein distance between $\mu_0, \mu_1 \in P_2(X, d)$ is defined by
$$
W_2\left(\mu_0, \mu_1\right)^2:=\inf_{\pi\in \Pi} \int_{X\times X} d(x, y)^2 \mathrm{~d} \pi(x, y),
$$
where $\Pi$ is the set of coupling measures $\pi$ of $\mu_0$ and $ \mu_1$ on $X\times X$, i.e., $\Pi=\{\pi\in P(X\times X), \pi(\cdot, X)=\mu_0, \pi(X, \cdot)=\mu_1\}$, where $P(X\times X)$ is the set of probability measures on $X\times X$.

Fix a reference measure $\mu$ on $(X, d)$, let  $P_2(X, d, \mu)$ be the subspace of  all absolutely continuous measures  with respect to the measure $\mu$.  For any given measure $\nu \in P_2(X, d)$, we can define the relative entropy with respect to $\mu$ as 
$$
\operatorname{Ent}(\nu):=\int_X \rho \log \rho \mathrm{d} \mu,
$$
if $\nu=\rho \mu$ is absolutely continuous w.r.t.\;$\mu$ and $(\rho \log \rho)_{+}$ is integrable  w.r.t.\;$\mu$,  otherwise we set $\operatorname{Ent}(\nu)=+\infty$.
 The Fisher information is defined by
$$
I(\nu):= \begin{cases}\int_X \frac{|D \rho|^2}{\rho} \mathrm{d}\mu & \text { if } \nu=\rho \mu, \\ \infty & \text { otherwise. }\end{cases}
$$
 
Given $N \in$ $(0, \infty)$, Ebar, Kuwada and Sturm \cite{EKS2015} introduced the functional $U_N: P_2(X, d) \rightarrow[0, \infty]$ 

$$
U_N(\nu):=\exp \left(-\frac{1}{N} \operatorname{Ent}(\nu)\right),
$$
which is similar to the Shannon entropy power \cite{Sh}.

 We now follow Bacher  and Sturm \cite{BS2010} and Ambrosio-Gigli-Savar\'e   \cite{AGS2014Invent} to introduce 
  the definition of CD$^*(K, N)$ and RCD$^*(K, N)$ spaces below. Let $P_{\infty}(X, d, \mu)$ be the set of measures in $P_2(X, d, \mu)$ with bounded support.
 
\begin{definition}\cite{EKS2015}\label{Def}
 	For $\kappa \in \mathbb{R}$, and $\theta \geq 0 $ we define the function
 	\begin{equation*}
\mathfrak{s}_\kappa(\theta)= \begin{cases}\frac{1}{\sqrt{\kappa}} \sin (\sqrt{\kappa} \theta), & \kappa>0 ,\\ \theta, & \kappa=0 ,\\ \frac{1}{\sqrt{-\kappa}} \sinh (\sqrt{-\kappa} \theta), & \kappa<0.\end{cases}
\end{equation*}
\begin{equation*}
\mathfrak{c}_\kappa(\theta)= \begin{cases}\cos (\sqrt{\kappa} \theta), & \kappa \geq 0 ,\\ \cosh (\sqrt{-\kappa} \theta), & \kappa<0.\end{cases}
\end{equation*}
Moreover, for $t\in [0,1]$ we set
\begin{equation*}
\sigma_\kappa^{(t)}(\theta)= \begin{cases}\frac{\mathfrak{s}_\kappa(t \theta)}{\mathfrak{s}_\kappa(\theta)}, & \kappa \theta^2 \neq 0 \text { and } \kappa \theta^2<\pi^2 ,\\ t, & \kappa \theta^2=0 ,\\ +\infty, & \kappa \theta^2 \geq \pi^2.\end{cases}
\end{equation*}
 \end{definition} 
 
\begin{definition}[\cite{BS2010}]
	   We say that metric measure space $(X,d,\mu)$ satisfies the reduced curvature-dimension condition CD$^\ast(K, N)$ if and only if for each pair $\mu_0=\rho_0 \mu, \mu_1=\rho_1 \mu \in P_{\infty}(X, d, \mu)$, there exists an optimal coupling $\pi$ of $\mu_0$ and $\mu_1$ such that
\begin{equation}\label{CD}
\begin{aligned}
\int_X \rho_t^{-\frac{1}{N^{\prime}}} \mathrm{d} \mu_t \geq & \int_{X \times X}\left[\sigma_{K / N^{\prime}}^{(1-t)}\left(d\left(x_0, x_1\right)\right) \rho_0\left(x_0\right)^{-\frac{1}{N^{\prime}}}\right. \\
& \left.+\sigma_{K / N^{\prime}}^{(t)}\left(d\left(x_0, x_1\right)\right) \rho_1\left(x_1\right)^{-\frac{1}{N^{\prime}}}\right] \mathrm{d} \pi\left(x_0, x_1\right),
\end{aligned}
\end{equation}
 where  $\left(\mu_t\right)_{t \in[0,1]}$ in $P_{\infty}(X, d, \mu)$ is a geodesic connecting $\mu_0$ and $\mu_1$ and $N^{\prime} \geq N$.
If inequality \eqref{CD} holds for any geodesic $\left(\mu_t\right)_{t \in[0,1]}$ in $P_{\infty}(X, d, \mu)$,  we say that $(X, d, \mu)$ is a strong CD$^*(K, N)$ space.
\end{definition}

  To introduce the RCD spaces and  consider the canonical heat flow on $(X, d, \mu)$, we need several  notions  including the Cheeger energy functional.

\begin{definition}[minimal relaxed gradient\cite{AGS2014Invent}]
		We say that $G \in L^2(X, \mu)$ is a relaxed gradient of $f \in L^2(X, \mu)$ if there exist Borel d-Lipschitz functions $f_n \in L^2(X, \mu)$ such that: 
		
		(a) $f_n \rightarrow f$ in $L^2(X, \mu)$ and $|Df_n|$ weakly converge to  $\tilde{G}$ in $L^2(X, \mu)$; 
		
		(b) $\tilde{G} \leq G $. m-a.e. in X.  We say that G is the minimal relaxed gradient of f if its $L^2(X, \mu)$ norm is minimal among relaxed gradients. 
		
 We use $|Df |_{\ast} $ to denote  the minimal relaxed gradient.
\end{definition}
%
 Ambrosio et al \cite{AGS2014} proved that $|Df |_{\ast}=|\nabla f |_w , \mu-a.e$ where  $|\nabla f |_w $ denotes the so called minimal weak upper
gradient of $f$ (cf \cite{AGS2014Invent})

The Cheeger energy functional \cite{EKS2015} is  defined by  
\begin{equation*}
	\operatorname{Ch}(f):=\int_X |\nabla f |^2_w d\mu,
\end{equation*}
  and inner product is given by $$
\langle\nabla f, \nabla g\rangle:=\lim _{\varepsilon \searrow 0} \frac{1}{2 \varepsilon}\left(|\nabla(f+\varepsilon g)|_w^2-|\nabla f|_w^2\right).
$$
  We now have a strongly local Dirichlet form $(\mathcal{E}, D(\mathcal{E}))$ on $L^2(X, \mu)$ by setting $\mathcal{E}(f, f)=\operatorname{Ch}(f)$ and $D(\mathcal{E})=W^{1,2}(X, d, \mu)$ being a Hilbert space 
  and $L^2$-Lipschitz 
functions are dense in the usual sense.  In this case, $\mathrm{H}_t$ is a semigroup of the self-adjoined linear operator on $L^2(X, \mu)$ with the Laplacian $\Delta$ as its generator. The previous result implies that for $f, g \in W^{1,2}(X, d, \mu)$,  the Dirichlet form is defined by
$$
\mathcal{E}(f, g):=\int_X\langle\nabla f, \nabla g\rangle \mathrm{d} \mu.
$$
Moreover, for $f \in W^{1,2}$ and $g \in D(\Delta)$, the integration by parts formula holds
$$
\int_X\langle\nabla f, \nabla g\rangle \mathrm{d} \mu=-\int_X f \Delta g \mathrm{~d} \mu.
$$

Ambrosio et al \cite{AGS2014} proved that the Cheeger energy ${\rm Ch}$ is quadratic
 is equivalent to the linearity of the heat semigroup $\mathrm{H}_t$ defined by solving the 
 heat equation below:
\begin{equation*}
	\frac{\partial}{\partial t}u=\Delta u, \ \ \  u(0)=f.
\end{equation*}

For any $f, g\in D(\Delta)\cap W^{1, 2}(X, \mu)$ with $\Delta f, \Delta g\in W^{1, 2}(X, \mu)$, the iterated carr\'e du champs {\bf measure} is defined  by 
		\begin{eqnarray*}
	{\bf \Gamma_2}(f,  g):={1\over 2}\Delta \langle \nabla f, \nabla g\rangle-{1\over 2}\left(\langle\nabla f, \nabla \Delta g\rangle+\langle\nabla g, \nabla \Delta f\rangle\right)\mu.
	\end{eqnarray*}
	In particular,  we have
	\begin{eqnarray*}
	{\bf \Gamma_2}(f, f):={1\over 2}\Delta |\nabla f|^2-\langle\nabla f, \nabla \Delta f\rangle 
	\mu.
	\end{eqnarray*}

\begin{definition}[\cite{EKS2015, KL}]
	We say that a metric measure space $(X, d, \mu)$ is infinitesimally Hilbertian if the associated
	 Cheeger energy is quadratic. Moreover,
we call $(X, d, \mu)$ an RCD$^*(K, N)$ space if it satisfies the reduced Curvature-Dimension condition CD$^*(K, N)$ and satisfies infinitesimally Hilbertian condition. 	
\end{definition}

By \cite{AGS2014,EKS2015,KL},  for infinitesimally Hilbertian $(X, d, \mu)$, CD$^*(K, N)$  condition is equivalent to the following  conditions:

(i) There exists $C>0$ and $x_0 \in X$ such that
\begin{equation*}
\int_X \mathrm{e}^{-C d\left(x_0, x\right)^2} \mu(\mathrm{~d} x)<\infty.
\end{equation*}

(ii) For $f \in \mathcal{D}(\mathrm{Ch})$ with $|\nabla f|_w \leq 1 \quad \mu$-a.e.  $f$ has a $1$-Lipschitz representative.

(iii) For all $f \in \mathcal{D}(\Delta)$ with $\Delta f \in \mathcal{D}(\Delta)$ and $g \in \mathcal{D}(\Delta) \cap L^{\infty}(\mu)$ with $g \geq 0$ and $\Delta g \in L^{\infty}(\mu)$
$$
\begin{aligned}
& \frac{1}{2} \int_X|\nabla f|_w^2 \Delta g \mathrm{~d} \mu-\int_X\langle \nabla f, \nabla \Delta f\rangle g \mathrm{d}\mu 
 \geq K \int_X|\nabla f|_w^2 g \mathrm{d}\mu+\frac{1}{N} \int_X(\Delta f)^2 g \mathrm{d}\mu.
\end{aligned}
$$

We now give the following important examples of RCD$^\ast(K,N)$ space:
\medskip

\begin{itemize}

\item Let $\left(M^n, g\right)$ be a complete Riemannian manifold, $f: M \rightarrow \mathbb{R}$ a $C^2(M)$ function, $d_g$ the Riemannian distance function, and vol $g_g$ the Riemannian volume measure on $M$. Set $\mathfrak{m}:=e^{-f}$ vol $_g$. Then the metric measure space ( $M, d_g, \mathfrak{m}$ ) satisfies $\operatorname{RCD}(K, N)$ condition for $N>n$ if and only if 
$$
\operatorname{Ric}_N:=\operatorname{Ric}_g+\operatorname{Hess}_f-\frac{d f \otimes d f}{N-n} \geq K g
$$
holds. For $N=n$, the $\operatorname{RCD}(K, n)$ condition is equivalent to $d f=0$ and $\operatorname{Ric}_g \geq K$.

\item Let $\left\{\left(X_i, d_i, \mathfrak{m}_i\right)\right\}_i$ be a family of $\operatorname{RCD}^*\left(K_i, N\right)$ spaces. For $x_i \in X_i$, assume $\mathfrak{m}_i\left(B_1\left(x_i\right)\right)=1, K_i \rightarrow K$ and $\left(X_i, d_i, \mathfrak{m}_i, x_i\right) \xrightarrow{p m G}\left(X_{\infty}, d_{\infty}, \mathfrak{m}_{\infty}, x_{\infty}\right)$ as $i \rightarrow \infty$, where $\xrightarrow{p m G}$ means the pointed measured Gromov convergence (see \cite{Gigili2015} ). Then $\left(X_{\infty}, d_{\infty}, \mathfrak{m}_{\infty}\right)$ satisfies the $\operatorname{RCD}^*(K, N)$ condition. Moreover a family of $\mathrm{RCD}^*(K, N)$ spaces with the normalized measures is precompact with respect to the pmG-convergence.

\end{itemize}

By  Cavaletti-Milman \cite{CavMim} and Z. Li \cite{LZH}, the notion of RCD$^*(K, N)$ space is indeed equivalent to the one of RCD$(K, N)$ space.  So we will only say RCD$(K, N)$ space throughout this paper.

  We now explain some basic results on RCD spaces. 
  For $f, g \in \mathcal{D}(\Delta)\cap \mathcal{L}^{\infty}(\mu)$ and $\varphi \in C^1(\mathbb{R})$ with $\varphi(0)=0$, we have $\varphi(f) \in \mathcal{D}(\Delta) \cap L^{\infty}(\mu)$ and  the following chain rule \eqref{nabla} (see \cite{FOT} ) and the Leibniz rule \eqref{Delta} for the Laplacian (see \cite{Gigili2015MAMS} ) hold :
\begin{equation}\label{nabla}
\begin{aligned}
	\langle \nabla \varphi(f), \nabla g\rangle &=\varphi^{\prime}(f)\langle \nabla f, \nabla g\rangle \quad \mu \text {-a.e. }\\
	\Delta(\phi(g) )& =\phi'(g) \Delta g +\phi''(g) | \nabla g |_w\quad \mu \text {-a.e. }
\end{aligned}
\end{equation}
\begin{equation}\label{Delta}
	\Delta (f\cdot g)
= f \Delta g +g \Delta f +2 <\nabla f, \nabla g> 
\end{equation}

 Ambrosio et al. \cite{AGS2014Invent} proved that for $\mu \in \mathcal{P}_2(X), t \mapsto \operatorname{Ent}\left(P_t \mu\right)$ is absolutely continuous on $(0, \infty)$ and $\mu_t=P_t \mu$ satisfies the energy dissipation identity, i.e. $\mu_t \rightarrow \mu_0$ as $t \rightarrow 0$ and for $0<s<t$,
\begin{equation}\label{EnDI}
\operatorname{Ent}\left(\mu_s\right)=\operatorname{Ent}\left(\mu_t\right)+\frac{1}{2} \int_s^t\left|\dot{\mu}_r\right|^2 \mathrm{~d} r+\frac{1}{2} \int_s^t I\left(\mu_r\right) \mathrm{d} r  \text { a.e. } t.
\end{equation}
 The energy dissipation identity \eqref{EnDI} is equivalent to the following equality
\begin{equation*}
-\frac{\mathrm{d}}{\mathrm{~d} t} \operatorname{Ent}\left(\mu_t\right)=\left|\dot{\mu}_t\right|^2=I\left(\mu_t\right)<\infty \quad \text { a.e.}\quad t. 
\end{equation*}

\subsection{Sturm's super Ricci flows on metric measure spaces }

In this subsection, we briefly follow \cite{Sturm18, Ko-Sturm18} to recall the notion of Sturm's 
super Ricci flows on metric measure spaces.

Let $(L_t)_{t\in (0, T)}$ be a $1$-parameter family
of linear operators defined on an algebra $\mathcal{A}$ of functions on a set $X$ such that 
$L_t(\mathcal{A})\subset \mathcal{A}$ for each $t\in [0, T]$. We assume that we are given a topology on $\mathcal{A}$ 
such that limits and derivatives make sense. In terms of these data we define the square field 
operators 
$$\Gamma_t(f, g) ={1\over 2} [L_t(fg)-L_tfg-fL_tg].$$ 

We assume that $L_t$ is a diffusion operator in the sense that

\begin{itemize}

\item $\Gamma_t(u, u)\geq 0$ for all $u\in \mathcal{A}$, 

\item for every $k$-tuple of functions $u_1, ..., u_k$ in $\mathcal{A}$ 
and every $C^\infty$-function $\psi: \mathbb{R}^k\rightarrow \mathbb{R}$ 
vanishing at the origin, $\psi(u_1, \ldots, u_k)\in \mathcal{A}$  and 
$$L_t \psi (u_1, \ldots, u_k)==\sum\limits_{i=1}^k\psi_i(u_1, \ldots, u_k)L_tu_i+
\sum\limits_{1\leq i, j\leq k}\psi_{ij}(u_1, \ldots, u_k) \Gamma_t(u_i, u_j),$$
where $\psi_i:={\partial \psi\over \partial y_i}$ and $\psi_{ij}:={\partial^2 \psi\over \partial y_i\partial y_j}$.

\end{itemize}

The Hessian of $u$ at time $t$ and a point $x\in X$ is the bilinear form on $\mathcal{A}$ 
$$
H_t u(v, w)(x)={1\over 2}\left[\Gamma_t(v, \Gamma_t(u, w))
+\Gamma_t(w, \Gamma_t(u, v))-\Gamma_t(u, \Gamma_t(v, w))
\right](x)
$$
for $u, v, w\in \mathcal{A}$. 

The $\Gamma_2$-operator is defined via iteration of the 
square field operator as

$$
\Gamma_{2, t} (u, v)(x)={1\over 2}\left[L_t \Gamma_t(u, v)-
+\Gamma_t(L_t u, v)-\Gamma_t(u, L_t v)\right](x).
$$
Too simplify the notation, let $\Gamma_t(u) =\Gamma_t(u, u)$ and $\Gamma_{2, t}(u) 
=\Gamma_{2, t}(u, u)$. 

In terms of the $\Gamma_2$-operator we 
define the Ricci tensor at the space-time point $(t, x)\in [0, T) 
\times X$ by
$$R_t(x)=\inf \{\Gamma_{2, t}(u+v)(x): v\in \mathcal{A}_x^0\}$$
for $u\in \mathcal{A}$ where 

$$\mathcal{A}_x^0=\{v=\psi(v_1, \ldots, v_k): k\in \mathbb{N}, v_1, \ldots, v_k\in \mathcal{A}, \psi \ {\rm smooth\ \ with}\ \psi_i(v_1, \ldots, v_k)(x)=0\ \forall i\}
$$

We can always extend the definition of $L_t$ and $\Gamma_t$ to the algebra generated by the elements in $\mathcal{A}$ and the constant functions which leads to $L_t1 =0$ and $\Gamma_t(1 , f) =0$ for all $f$.

For the sequel we assume in addition that we are given a $2$-parameter family $(P_t^s, 0\leq s\leq t\leq T)$ of linear operators on $\mathcal{A}$ satisfying for all $s \leq r\leq t$ and all $u\in \mathcal{A}$
\begin{equation}
\begin{aligned}
& P_t^t u = u, \ \ P_t^r (P_r^s u)=P_t^s u,\\
& (P_t^s u)^2 \leq P_t^s(u^2),\\
&s\rightarrow P_t^s u \ \ {\rm and}\ \  t \rightarrow P_t^s u\ \ {\rm continuous}\\
&
\partial_s P_t^s u=-P_t^s L_su\\
&
\partial_t P_t^s u=L_t P_t^s u.
\end{aligned}
\end{equation}
Such a propagator $(P_t^s)$ for the given family of operators $(L_t)$ will exist in quite general situations under mild assumptions. We also require that for each $1$-parameter family $(u_r)_{r\in (s, t)}$ which is differentiable within $\mathcal{A}$ w.r.t. $r$
$$
\partial_r P_t^s u_r  = P_t^s (\partial_r u_r), \ \ \ 
\partial_r \Gamma_t( u_r, v)  = \Gamma_t (\partial_r u_r, v).$$

\medskip


\noindent{\bf Definition B.1 in \cite{Sturm18}}. We say that $(L_t)_{t\in [0,T)}$ is a {\bf super-Ricci flow} if
$$\partial_t \Gamma_t\leq 2R_t.$$
It is called Ricci flow if 
$$\partial_t \Gamma_t= 2R_t.$$

\noindent{\bf Lemma B.2 in \cite{Sturm18}}. $(L_t)_{t\in [0,T)}$ is a super-Ricci flow if and only if
$$\partial_t \Gamma_t\leq 2\Gamma_{2, t}.$$

\noindent{\bf Lemma B.3 in \cite{Sturm18}}.  $(L_t)_{t\in [0,T)}$ is a super-Ricci flow if and only  in addition to $(82)$ for each $x$, each $\varepsilon>0$ and each $u \in \mathcal{A}$ there exists $v\in \mathcal{A}_0^x$ such that
$$\partial_t \Gamma_t(u)(x)+\varepsilon\leq 2\Gamma_{2, t}(u+v)(x).$$

\medskip

Given any extended number $N\in [1, \infty]$ we define the $N$-Ricci tensor at $(t, x)$ by
$$R_{N, t}(x)=\inf \{\Gamma_{2, t}(u+v)(x)-{1\over N}(L_t(u+v))^2(x): v\in \mathcal{A}_x^0\}$$
(Again recall that the definition of RN here slightly differs from that in [46].)

\medskip

\noindent{\bf Definition B.7 in \cite{Sturm18}}. We say that  $(L_t)_{t\in [0,T)}$ is a {\bf super-$N$-Ricci flow} if 
$$\partial_t \Gamma_t\leq 2_{N, t}.$$
If equality holds then $(L_t)_{t\in [0,T)}$ is $N$-Ricci flow.

A Ricci flow is a $N$-Ricci flow for the particular choice $N=\infty$, i.e. a solution to $\partial_t \Gamma_t=2R_t$.

\noindent{\bf Theorem B.8 in \cite{Sturm18}}. Under appropriate regularity assumptions on $(P_{t}^{s})_{s\leq t}$, the following are equivalent\\
$(i)$ $\partial_t \Gamma_t\leq 2R_{N, t}$ ($\forall u\in \mathcal{A}, \forall t$)\\
\\
$(ii)$ $\partial_t \Gamma_t(u)\leq 2\Gamma_{2, t}(u)-{2\over N}(L_t u)^2$ ($\forall u\in \mathcal{A}, \forall t$)\\
\\
$(iii)$ $\Gamma_t(P_t^s u) +2N\int_s^t(P_t^r L_rP_r^su)^2dr\leq P_t^s(\Gamma_s(u))$   ($\forall u\in \mathcal{A}, \forall s\leq t$)

\medskip

\medskip

 In \cite{Ko-Sturm18}, Kopfer and Sturm proved the following equivalences between  the super Ricci flows and the dynamic $(K, N)$-convexity of the Boltzmann entropy $S_t; [0, T]\times \mathcal{P}(X)\rightarrow (-\infty, +\infty]$
defined by
 
$$S_t(\mu)=\int_X u \log u dm_t \ \  {\rm if}\ \ \mu=um_t$$
and $S_t(\mu_)=+\infty$ if $\mu$ is not absolutely continuous with respect to $m_t$.

\noindent{\bf Theorem 1.9 in \cite{Ko-Sturm18}}.  For each $N\in (0, \infty)$ the following are equivalent:\\
$({\bf I}_N)$ For a.e. $t\in (0, T)$and every $W_t$ -geodesic $\mu^{a}, a\in [0, 1]$ in $\mathcal{P}$ 
 with $\mu^0, \mu^1\in {\rm Dom}(S)$
 \begin{equation}
\left. \partial_a^{+}S_t(\mu^a)\right|_{a=1-}-
\left. \partial_a^{-}S_t(\mu^a)\right|_{a=0+}
\geq -{1\over 2} \partial_t^{-}
W_{t-}^2(\mu^0, \mu^1)+{1\over N}|S_t(\mu^0)-S_t(\mu^1)|^2.
 \end{equation}
 $({\bf II}_N)$ For all $0\leq s\leq t\leq T$ and $\mu, \nu
 \in \mathcal{P}$
\begin{equation}
W^2_s(\widehat{P}_{t, s}\mu, \widehat{P}_{t, s}\nu)
\leq W^2_{t}(\mu, \nu)-{2\over N}\int_s^t [S_r( \widehat{P}_{t, r}\mu)-
S_r( \widehat{P}_{t, r}\nu)]^2dr,
 \end{equation}
  where $t\mapsto \mu_t=\widehat{P}_{\tau, t}$ is the dual heat flow which is unique dynamical backward EVI$^{-}$-gradient flow for the Boltzmann entropy $S$ in the following sense: for every $\mu\in {\rm Dom}(S)$ and every $\tau<T$ the absolutely continuous curve $t\mapsto \mu_t$ satisfies 
  $${1\over 2}\partial_s^{-}\left.W_{s, t}^2(\mu_s, \sigma)\right|_{s=t-}\geq S_t(\mu_t)-S_t(\sigma)$$
  for all $\sigma\in {\rm Dom}(S)$ and all $t\leq \tau.$\\  
$({\bf III}_N)$ For all $u\in {\rm Dom}(E)$
and  $0\leq s\leq t\leq T$  
\begin{equation}
|\nabla_t (P_{t, s}u)|^2\leq P_{t, s}(|\nabla_s u|^2)-{2\over N}
\int_s^t (P_{t, r}\Delta_r P_{r, s}u)^2 dr.
 \end{equation}
$({\bf IV}_N)$ For all $0\leq s\leq t\leq T$ and 
for all $u_s, g_t\in \mathcal{F}$  with $g_0\geq 0, 
g_t\in L^\infty, u_s\in Lip(X)$, and for a.e. $r\in (s, t)$
\begin{equation}
|{\bf \Gamma}_{2, r}(u_r)(g_r)\geq {1\over 2}\int_X \dot{\Gamma}_r)
(u_r)(g_r)dm_r+{1\over N}\left(\int_X \Delta_r u_rg_r dm_r\right)^2
 \end{equation}
 (“dynamic Bochner inequality” or “dynamic Bakry-Emery condition”)
where $u_r= P_{r, s}u_s$ and $g_r= P_{t, r}^*g_t$.
 
%
%
%
 

 \medskip
 
 \noindent{\bf Theorem 1.11 in \cite{Ko-Sturm18}}. Assume the time-dependent mm-space $(X, d_t, m_t, t\in I)$, is a
super-$(K, N)$-Ricci flow in the sense that for a.e. $t\in I$ and every $W_t$-geodesic $\mu^{a}, a\in [0, 1]$ in $\mathcal{P}$ 
 with $\mu^0, \mu^1\in {\rm Dom}(S)$ 
 \begin{equation}
\left. \partial_a^{+} S_t(\mu^a)\right|_{a=1-}-
\left. \partial_a^{-} S_t(\mu^a)\right|_{a=0+}
\geq -{1\over 2}\partial_t^{-}
W_{t-}^2(\mu^0, \mu^1)+{1\over N}|S_t(\mu^0)-S_t(\mu^1)|^2+KW_t^2(\mu^0, \mu^1).
 \end{equation}
Then for each 
$C\in \mathbb{R}$ the time-dependent mm-space 
$(X, \widetilde{d}_t, \widetilde{m}_t, t\in I)$
 is a super-$N$-Ricci flow if we let
 \begin{equation}
  \widetilde{d}_t=e^{-K\tau(t)}d_{\tau(t)}, \ 
  \ \widetilde{m}_t= m_{\tau(t)}, \ \ \tau(t)=-{1\over 2K}\log(C-2Kt),
 \end{equation}
 and $ \widetilde{I}=\{\tau(t): 2Kt<C\}$.
 
\medskip

\noindent{\bf Corollary  1.12 in \cite{Ko-Sturm18}}. For each $N\in (0, \infty)$  and $K\in \mathbb{R}$ the following are equivalent:\\
$({\bf I}_{K, N})$ For a.e. $t\in (0, T)$and every $W_t$ -geodesic $\mu^{a}, a\in [0, 1]$ in $\mathcal{P}$ 
 with $\mu^0, \mu^1\in {\rm Dom}(S)$
 \begin{equation}
\left. \partial_a^{+}S_t(\mu^a)\right|_{a=1-}-
\left. \partial_a^{-}S_t(\mu^a)\right|_{a=0+}
\geq -{1\over 2} \partial_t^{-}
W_{t-}^2(\mu^0, \mu^1)+KW_t^2(\mu^0, \mu^1)++{1\over N}|S_t(\mu^0)-S_t(\mu^1)|^2.
 \end{equation}
 $({\bf II}_{K, N})$ For all $0\leq s\leq t\leq T$ and $\mu, \nu
 \in \mathcal{P}$
\begin{equation}
W^2_s(\widehat{P}_{t, s}\mu, \widehat{P}_{t, s}\nu)
\leq W^2_{t}(\mu, \nu)-{2\over N}\int_s^t e^{-2Kr}[S_r( \widehat{P}_{t, r}\mu)-
S_r( \widehat{P}_{t, r}\nu)]^2dr.
 \end{equation}
$({\bf III}_{K, N})$ For all $u\in {\rm Dom}(E)$
and  $0\leq s\leq t\leq T$  
\begin{equation}
e^{2Kt}|\nabla_t (P_{t, s}u)|^2\leq e^{2Ks}P_{t, s}(|\nabla_s u|^2)-{2\over N}
\int_s^t e^{2Kr}(P_{t, r}\Delta_r P_{r, s}u)^2 dr.
 \end{equation}
$({\bf IV}_{K, N})$ For all $0\leq s\leq t\leq T$ and 
for all $u_s, g_t\in \mathcal{F}$  with $g_0\geq 0, 
g_t\in L^\infty, u_s\in Lip(X)$, and for a.e. $r\in (s, t)$
\begin{equation}
|{\bf \Gamma}_{2, r}(u_r)(g_r)\geq {1\over 2}\int \dot{\Gamma}_r)
(u_r)(g_r)dm_r+K\int_X \Gamma_r(u_r)g_r dm_r+{1\over N}\left(\int_X \Delta_r u_rg_r dm_r\right)^2
 \end{equation}
 where $u_r= P_{r, s}u_s$ and $g_r= P_{t, r}^*g_t$.
 
\medskip

\noindent{\bf Weighted case} (see {\bf Remark B.10 in \cite{Sturm18}}). Let $m_t$ be a family of 
reference measures on $X$, and $\phi_t\in \mathcal{A}$. Let 
$$\mu_t=e^{-\phi_t}m_t$$ 
be a family of weighted measure on $X$. We call $\phi_t$ the time dependent potential functions on 
$(X, d_t, \mu_t)$. 
Define $L_t$  as an operator on $\mathcal{A}$ by
$$
\int_X \Delta_t u vdm_t=-\int_X \Gamma_t (u, v)dm_t \ \ \ \forall u, v\in \mathcal{A},$$
and define similarly $L_t$ by replacing all $m_t$ 
by $\mu_t$. Then 
$$L_t=\Delta_t-\Gamma_t (\cdot, \phi_t)$$
and thus
$$\Gamma_{2}(L_t) = \Gamma_{2}(\Delta_t)+H_t \phi_t, \ \ \  Ric(L_t) = Ric(\Delta_t) +H_t \phi_t.$$
In particular, the family $(L_t)_{t\in (0,T)}$ defined by the family $(\Gamma_t, \phi_t)_{t\in (0, T)}$ 
is a super-Ricci flow if and only if
$$
\partial_t \Gamma_t\leq \Gamma_{2}(\Delta_t)+H_t \phi_t$$
which imposes no restriction on the evolution of the weights $\phi_t$. Each family of weight functions $(\phi_t)_{t\in (0, T)}$ provides a differential inequality for square field operators.

To end this subsection, let us recall the following result due to Sturm \cite{Sturm18}.

\begin{theorem}[Theorem 0.7 in \cite{Sturm18} ] The mm space $(X, d_t, m_t, t\in I=[0, T])$ 
 induced by a time dependent weighted $n$-dimesional Riemannaina manifold $(M, g_t, \phi_t, t\in I)$
is a super-$N$-Ricci flow if and only if $N\geq n$ and for all $t\in I$

\begin{eqnarray}
{1\over 2}{\partial g_t\over \partial t}+Ric_{g_t}+{\rm Hess}_{g_t}-{\nabla \phi_t\otimes \nabla \phi_t\over N-n}\geq 0.
\end{eqnarray}
In particular for $ N=n$ this requires $\phi_t$  to be constant. That is, $m_t=C_t vol_t$ for each $t\in I$.

\end{theorem}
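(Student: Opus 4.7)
The plan is to use Sturm's pointwise characterization of super-$N$-Ricci flows in the smooth setting (Theorem~B.8 of \cite{Sturm18} applied with $\mathcal{A}=C^\infty(M)$): the family $(L_t)_{t\in I}$ is a super-$N$-Ricci flow if and only if
\begin{equation*}
\partial_t\Gamma_t(u)\;\le\;2\,\Gamma_{2,t}(u)-\frac{2}{N}(L_tu)^2\qquad\forall u\in C^\infty(M),\ \forall t\in I.
\end{equation*}
On $(X,d_t,m_t)$ with $m_t=e^{-\phi_t}\mathrm{vol}_{g_t}$, we have $\Gamma_t(u)=|\nabla_t u|_{g_t}^2$ and $L_t=\Delta_{g_t}-\nabla_t\phi_t\cdot\nabla_t$ (the Witten Laplacian). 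Differentiating $\Gamma_t(u)=g_t^{ij}\partial_i u\partial_j u$ in $t$ with $u$ fixed yields $\partial_t\Gamma_t(u)=-(\partial_t g_t)(\nabla u,\nabla u)$, while the classical Bochner identity for $L_t$ gives
\begin{equation*}
\Gamma_{2,t}(u)=\|\nabla^2_t u\|_{\mathrm{HS}}^2+\bigl(\mathrm{Ric}_{g_t}+\nabla^2_t\phi_t\bigr)(\nabla u,\nabla u).
\end{equation*}

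The algebraic heart of the argument is the identity
\begin{equation*}
\frac{(a+b)^2}{n}+\frac{b^2}{N-n}-\frac{a^2}{N}=\frac{\bigl((N-n)a+Nb\bigr)^2}{nN(N-n)}\ge 0\qquad(N>n),
\end{equation*}
applied with $a=L_tu$ and $b=\nabla_t\phi_t\cdot\nabla u$, combined with the Newton trace inequality $\|\nabla^2_tu\|_{\mathrm{HS}}^2\ge(\Delta_{g_t}u)^2/n=(L_tu+\nabla_t\phi_t\cdot\nabla u)^2/n$. Together they yield the pointwise bound
\begin{equation*}
\Gamma_{2,t}(u)-\frac{(L_tu)^2}{N}\;\ge\;\Bigl(\mathrm{Ric}_{g_t}+\nabla^2_t\phi_t-\frac{\nabla_t\phi_t\otimes\nabla_t\phi_t}{N-n}\Bigr)(\nabla u,\nabla u),
\end{equation*}
and equality holds at a point $x_0$ precisely when the $2$-jet of $u$ satisfies $\nabla^2_t u(x_0)=-\tfrac{\nabla_t\phi_t(x_0)\cdot\nabla u(x_0)}{N-n}\,g_{t_0}(x_0)$, as one checks by computing both sides.

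The ``if'' direction then follows instantly: assuming $\tfrac12\partial_t g_t+\mathrm{Ric}_N(L_t)\ge 0$ (writing $\mathrm{Ric}_N(L_t)$ for the expression in parentheses), the above bound combined with $\partial_t\Gamma_t(u)=-(\partial_t g_t)(\nabla u,\nabla u)$ gives the pointwise dynamic Bochner inequality for every $u\in C^\infty(M)$. For the ``only if'' direction I freeze $(t_0,x_0)$ and any $v\in T_{x_0}M$, use a cut-off of a local quadratic polynomial in normal coordinates to produce $u\in C^\infty(M)$ saturating the inequality at $(t_0,x_0)$, and feed this specific $u$ into Sturm's characterization; the saturation turns the dynamic Bochner inequality into $-(\partial_t g_t)(v,v)\le 2\,\mathrm{Ric}_N(L_t)(v,v)$, which is the asserted curvature-dimension-flow bound. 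The constraint $N\ge n$ and the rigidity at $N=n$ both come from the $\frac{1}{N-n}$ denominator: in the borderline $N=n$ the offsetting $\frac{b^2}{N-n}$ term disappears, and testing with a $u$ whose Hessian vanishes at $x_0$ but $\nabla u(x_0)=v\ne 0$ forces $\nabla_t\phi_t(x_0)\cdot v=0$ for every $v$, so $\phi_t$ is spatially constant and $m_t=C_t\mathrm{vol}_{g_t}$. The main obstacle I anticipate is the construction of the saturating test function as a bona fide element of the algebra $\mathcal{A}=C^\infty(M)$ used in Theorem~B.8; this is handled by a standard normal-coordinate plus smooth cut-off argument, so it is more a careful bookkeeping point than a genuine difficulty.
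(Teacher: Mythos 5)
First, a point of reference: the paper does not prove this statement at all --- it is recalled verbatim as Theorem~0.7 of Sturm's paper \cite{Sturm18}, with no argument supplied. So there is no ``paper's own proof'' to compare against; your proposal has to stand on its own. On its own terms it is essentially the standard (and presumably Sturm's) argument: reduce via Theorem~B.8 to the pointwise dynamic Bochner inequality $\partial_t\Gamma_t(u)\le 2\Gamma_{2,t}(u)-\tfrac{2}{N}(L_tu)^2$, compute $\partial_t\Gamma_t(u)=-(\partial_t g_t)(\nabla u,\nabla u)$ and the weighted Bochner identity, and then self-improve using the trace inequality together with the identity $\tfrac{(a+b)^2}{n}+\tfrac{b^2}{N-n}-\tfrac{a^2}{N}=\tfrac{((N-n)a+Nb)^2}{nN(N-n)}$ (which I checked; it is correct), with the converse obtained by saturating equality with a quadratic-in-normal-coordinates test function. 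The ``if'' direction and the generic ``only if'' direction are sound.

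The one step that would fail as written is your treatment of the borderline case $N=n$. Testing with a $u$ whose Hessian vanishes at $x_0$ and $\nabla u(x_0)=v$ only yields the inequality $\tfrac{2}{n}(\nabla\phi_t\cdot v)^2\le 2\bigl(\mathrm{Ric}+\nabla^2\phi_t\bigr)(v,v)+(\partial_t g_t)(v,v)$, which bounds $(\nabla\phi_t\cdot v)^2$ but does not force it to vanish; moreover both sides scale the same way under $u\mapsto cu$, so rescaling does not help. The correct mechanism is to vary the trace of the Hessian: take $\nabla^2_tu(x_0)=\lambda g_t$ with $\nabla u(x_0)=v$ and $b=\nabla_t\phi_t\cdot v$, so that
\begin{equation*}
2\Gamma_{2,t}(u)-\tfrac{2}{n}(L_tu)^2 = \tfrac{2b(2n\lambda-b)}{n}+2\bigl(\mathrm{Ric}+\nabla^2_t\phi_t\bigr)(v,v),
\end{equation*}
which is affine in $\lambda$ with slope $4b$; if $b\neq 0$ it tends to $-\infty$ for $\lambda$ of the appropriate sign while the left-hand side $-(\partial_t g_t)(v,v)$ stays fixed, forcing $b=0$ for all $v$ and hence $\phi_t$ constant. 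A variant of the same $\lambda\to\infty$ test (where $\Gamma_{2,t}(u)-\tfrac{1}{N}(L_tu)^2\sim n\lambda^2(1-\tfrac{n}{N})$) is also what rules out $N<n$, which your sketch asserts but does not actually derive, since the algebraic identity you rely on is only meaningful for $N>n$. With these two repairs the argument is complete.
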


\subsection{The notion of $(K, n, N)$-super Ricci flows}

To extend Perelman's $W$-entropy formula to super Ricci flows on metric measure spaces, we need to introduce some new definitions and notations.

Let $(X, d_t, m_t, t\in [0, T])$ be a family of time dependent RCD metric measure spaces.  Let $\{\phi_t, t\in [0, T]\} \subset \mathcal{A}$. Let
$$d\mu_t=e^{-\phi_t}dm_t$$ be a weighted measure on $(X, d_t, m_t)$. We call $\phi_t$ the potential function of $\mu_t$. Let 
$$L_t=\Delta_t-\Gamma_t(\phi_t, \cdot)=\Delta_t-\nabla_{{\bf g}_t}\phi_t \cdot \nabla_{{\bf g}_t}\cdot$$
be the time dependent Witten Laplacian on $(X, d_t, \mu_t)$.

The Dirichlet form with infinitesimal generator $\Delta_t$ on $(M, d_t, m_t)$  reads

$$
\mathcal{E}_{\Delta_t}(u, v)=\int_X \Delta_t u vdm_t=-\int_X \Gamma_t (u, v)dm_t \ \ \ \forall u, v\in \mathcal{A},$$
and the Dirichlet form with infinitesimal generator $L_t=\Delta_t-\Gamma_t(\phi_t, \cdot)$ on $(M, d_t, 
\mu_t)$  
reads

$$
\mathcal{E}_{L_t}(u, v)=\int_X L_t u vd\mu_t=-\int_X \Gamma_t (u, v)d\mu_t \ \ \ \forall u, v\in \mathcal{A}.$$

Following \cite{Gigili2015MAMS, BGZ, BG, Han2018A, Han2018B},  the tangent module $L^2(T(X, d_t, m_t))$ and the cotangent module $L^2(T^*(X, d_t, m_t))$ of an RCD$(K, N)$ space $(X, d_t, m_t)$ have been defined as $L^2$-normed modules. The pointwise inner product $\langle\cdot, \cdot\rangle: L^2(T^*(X, d_t, m_t))\times L^2(T^*(X, d_t, m_t))\rightarrow L^1(X, d_t, m_t) $ is defined by
$$ \langle df, dg\rangle={1\over 4}\left(|\nabla_t(f+g)|^2-|\nabla_t(f-g)|^2)\right)$$
for all $f, g\in W^{1, 2}(X, d_t, m_t)$. For any $g\in W^{1, 2}(X, d_t, m_t)$, its gradient $\nabla_t g$ is the unique 
element in $L^2(T(X, d_t, m_t))$ such that
$$\nabla_t g(df)=\langle df, dg\rangle,\ \ \ m_t-a.e. $$
for all $f\in W^{1, 2}(X, d_t, m_t)$. Therefore, $L^2(T(X, d_t, m_t))$ inherits a pointwise inner product $\langle\cdot, \cdot\rangle_{t}$ from the above inner product $\langle\cdot, \cdot\rangle_t$  on $L^2(T^*(X, d_t, m_t))$.  To keep the standard notation as used  in  Riemannian geometry, we  use  ${\bf g}_t$ to denote this inner product $\langle\cdot, \cdot\rangle_t$ on $L^2(T(X, d_t, m_t))$. 

The notion of local dimension $n$ of an RCD space $(X, d_t, m_t)$ is introduced in \cite{Han2018A, Han2018B} as follows: We say that $L^2(TM)$ is finitely generated if there is a finite family $v_1, ..., v_n$
spanning $L^2(T(X, d_t, m_t))$ on $(X, d_t, m_t)$, and locally finitely generated if there is a partition $\{E_i\}$ of $X$ such that 
$\left.L^2(TM)\right|_{E_i}$ is finitely finitely generated for every $i \in \mathbb{N}$. If $L^2(T(X, d_t, m_t))$ has a basis of cardinality $n_t$ on a Borel set $A\subset X$, we say that it has dimension $n_t$ on $A$,
or that its local dimension on $A$ is $n_t$. By See \cite{Han2018A, Han2018B, BS}, for each fixed $t$, the local dimension on $A$ is a global constant $n_t\in \mathbb{N}$. 
{\bf
From now on, we assume  the global geometric dimension of an RCD space $(X, d_t, m_t)$ is a constant $n$ which is independent of $t\in [0, T]$. }

The Hessian of a nice function $f\in \mathcal{A}$ is well-defined as in  \cite{Gigili2015MAMS, Sturm18, Han2018A, Han2018B}. It  is defined as  the unique bilinear map

$$\nabla^2_t f={\rm H}_t f: \{\nabla g: g\in {\rm Test} F(X)\}^2\mapsto L^0(X)$$
such that
$$2\nabla^2_t f(\nabla g, \nabla h)=\langle \nabla_t g, \nabla_t\langle \nabla_t f, \nabla_t h\rangle\rangle+
\langle \nabla_t h, \nabla_t\langle \nabla_t f, \nabla_t g\rangle\rangle
-\langle \nabla_t f, \nabla_t\langle \nabla g_t, \nabla_t h\rangle\rangle
$$
for any $g, h\in {\rm Test} F(X)$, where 
${\rm Test} F(X)=\{f\in D(\Delta_t)\cap L^\infty: |\nabla_t f|\in L^\infty, \Delta_t f \in W^{1, 2}(X, m_t)\}$ is the space of test functions. It is denoted by ${\rm H}_t f$ in \cite{Sturm18}  and  will be denoted by $\nabla^2_tf$ in this paper for keeping the standard notation as in Riemannian geometry. Note that
\begin{eqnarray*}
\nabla^2_t f(\nabla_tf, \nabla_t g)={1\over 2}\langle \nabla_t |\nabla_t f|^2, \nabla_t g\rangle.
\end{eqnarray*}

Similarly to \cite{Gigili2015MAMS, Sturm18, Han2018A, Han2018B}, we introduce 
\begin{equation*}
	{\bf \Gamma}_{2}(\Delta_t)(f, f):={1\over 2}{\bf \Delta}_t |\nabla_t f|^2-\langle \nabla_t f, \nabla_t \Delta_t f\rangle m_t
\end{equation*}
for all nice functions $f$ on RCD space $(X, d_t, m_t)$, where ${\bf \Delta}_t$ is the 
Laplacian in the distribution sense.  We define the measure 
valued Ricci curvature for the c Laplacian $\Delta_t$ on time dependent metric measure spaces as
\begin{equation*}
	{\bf Ric}(\Delta_t)(\nabla_t f, \nabla_t f):={\bf \Gamma}_{2}(\Delta_t)(f, f)-\|\nabla^2_t f\|_{\rm HS}^2 m_t.
\end{equation*}

Moreover, we introduce 

\begin{equation*}
	{\bf \Gamma}_{2}(L_t)(f, f):={1\over 2}L_t |\nabla_t f|^2-\langle \nabla_t f, \nabla_t L_t f\rangle \mu_t
\end{equation*}
and we have  the following distributional Bochner formula for the Witten Laplacian 
on time dependent metric measure space 
\begin{eqnarray}
{\bf \Gamma}_{2}(L_t)(f, f)=\|\nabla^2 f\|_{g_t, {\rm HS}}^2+{\bf Ric}_{\infty, n}
(L_t)(\nabla f, \nabla f).\label{DBHF}
\end{eqnarray}
Formally, we have 
\begin{eqnarray*}
{\bf Ric}_{\infty, n}(L_t)(\nabla f, \nabla f)=
{\bf Ric}(\Delta_t)(\nabla f, \nabla f)+(\nabla^2_t \phi_t)(\nabla f, \nabla f).
\end{eqnarray*}

For nice function $f$ whose Hessian  $\nabla^2_t f$ has finite Hilbert-Schmidt norm, i.e., 
$\|\nabla^2_t f\|_{\rm HS}<\infty$, the trace of $\nabla^2_t f$, denoted by ${\rm Tr}\nabla^2_t f$ in this paper 
as in Riemannian geometry, can be introduced by the same way as in Han \cite{Han2018A, Han2018B} as follows: Let $e_1, \ldots, e_n$ be a basis of the $L^2$-tangent module $L^2(TX,  d_t)$. Then
$${\rm Tr}\nabla^2_t f=\sum\limits_{1\leq i, j\leq n}\nabla^2_t f(e_i, e_j)\langle e_i, e_j\rangle.$$ 
In our notation, it reads as follows
$${\rm Tr}\nabla^2_t f=\langle\nabla^2_t f, {\bf g}_t\rangle.$$ 

We now introduce the following 

\begin{definition} The $N$-dimensional Bakry-Emery Ricci curvature {\bf measure} of the time dependent Witten Laplacian 
$$L_t=\Delta_t-\nabla_{{\bf g}_t}\phi_t\cdot\nabla_{{\bf g}_t}\cdot$$ on 
an $n$-geometric dimensional RCD space $(X, d_t, m_t)$ is defined  as follows

\begin{equation*}
	{\bf Ric_{N, n}}(L_t)(\nabla_t f, \nabla_t f):={\bf Ric}_{\infty, n}(\nabla_t f, \nabla_t f)
	-{[{\bf g}_t(\nabla_t \phi_t, \nabla_t f)]^2\over N-n}.
	\end{equation*}
Formally, we have
\begin{eqnarray*}
{\bf Ric}_{N, n}(L_t)(\nabla f, \nabla f):=
{\bf Ric}(\Delta_t)(\nabla f, \nabla f)+(\nabla^2_t \phi_t)(\nabla f, \nabla f) -{[{\bf g}_t(\nabla_t \phi_t, \nabla_t f)]^2\over N-n}.
\end{eqnarray*}
\end{definition}

We now introduce the notion of the $(K, n, N)$-super Ricci flow on time dependent metric measure spaces.

\begin{definition} We call an $n$-dimensional time dependent metric measure space $(X, d_t, {\bf g}_t, m_t, \phi_t, t\in [0, T])$ a 
$(K, n, N)$-super Ricci flow  if 
\begin{eqnarray*}
{1\over 2}{\partial {\bf g}_t\over \partial t}+{\bf Ric_{N, n}}(L_t)\geq K{\bf g}_t, \ \ \ \ \forall ~ t\in [0, T],
\end{eqnarray*}
where $K\in \mathbb{R}$ and $N\in [n, \infty]$ are two constants. In particular, we call an $n$-dimemnsional time dependent metric measure space $(X, d_t, {\bf g}_t, m_t, \phi_t, t\in [0, T])$ a 
$(K, n, N)$-Ricci flow  if 
\begin{eqnarray*}
{1\over 2}{\partial {\bf g}_t\over \partial t}+{\bf Ric_{N, n}}(L_t)=K{\bf g}_t, \ \ \ \ \forall ~ t\in [0, T].
\end{eqnarray*}
\end{definition}

Recall that when $(X, d, {\bf g}, m, \phi, \mu)$ is an $n$-dimensional RCD metric measure space satisfying the distributional Bochner formula 
\begin{eqnarray}
{\bf \Gamma}_{2}(L)(f, f)=\|\nabla^2 f\|_{{\bf g}, {\rm HS}}^2+{\bf Ric}_{\infty, n}
(L)(\nabla f, \nabla f).\label{DBHF0}
\end{eqnarray}
and with 
$${\bf Ric}_{N, n}(L)\geq K{\bf g},$$ 
where $N\geq n$ and $K\in \mathbb{R}$ are two constants, we call it an RCD$(K, n, N)$ space, which has been introduced in our previous paper with Zhang \cite{Li-Zhang2025}. 
%
Obviously, an RCD$(K, n, N)$ space is indeed a stationary $(K, n, N)$-super Ricci flow on metric measure spaces.

Similarly to Perelman \cite{P1} and S. Li-Li \cite{LiLi2015PJM, LiLi2018SCM, LiLi2018JFA, LiLi2020JGA}, we introduce 
\begin{definition}
The conjugate heat equation on a family of time dependent metric measure $(X, d_t, m_t)$ reads 
\begin{eqnarray}\label{conjugateEq}
 {d\over dt}\left(e^{-\phi_t}dm_t\right)=0.
\end{eqnarray}
Equivalently, $(\phi_t, {\bf g}_t, t\in [0, T])$ satisfies 
\begin{eqnarray}\label{conjugateEq}
\partial_t \phi_t={1\over 2}{\rm Tr}\left(\partial_t {\bf g}_t\right).
\end{eqnarray}

\end{definition}

In the case $N=n$, the notion of the $(K, n, N)$-super Ricci flow is indeed the $K$-super Ricci flow in geometric analysis
\begin{eqnarray*}
{1\over 2}{\partial {\bf g}_t\over \partial t}+Ric_{{\bf g}_t}\geq K{\bf g}_t, \ \ \ \ \forall ~ t\in [0, T], 
\end{eqnarray*}
and in the case $N=\infty$, the $(K, \infty)$-super Ricci flow equation reads
\begin{eqnarray*}
{1\over 2}{\partial  {\bf g}_t\over \partial t}+{\bf Ric}(L_t)\geq K{\bf g}_t, \ \ \ \ \forall ~ t\in [0, T]. \label{KKK}
\end{eqnarray*}
In view of this,  the modified 
Ricci flow introduced by Perelman in \cite{P1} is indeed the $(0, \infty)$-Ricci flow together with the conjugate heat equation
\begin{eqnarray*}
{\partial {\bf g}_t\over \partial t}&=&-2{\bf Ric}(L_t),\\ \ \ \ {\partial\phi_t\over \partial t}&=&{1\over 2}{\rm Tr} \left({\partial {\bf g}_t\over \partial t}\right).
\end{eqnarray*}
where $R_{{\bf g}_t}={\rm Tr} {\bf Ric}_{{\bf g}_t}$ is the scalar curvature of the Riemannian metric ${\bf g}_t$.
More precisely, the Perelman modified 
Ricci flow reads (see \cite{P1})
\begin{eqnarray*}
{\partial {\bf g}_t\over \partial t}&=&-2\left({\bf Ric}_{{\bf g}_t}+\nabla_{{\bf g}_t}^2 \phi_t\right),\\
 \ \ \ {\partial\phi_t\over \partial t}&=&-\Delta_t\phi_t- R_{{\bf g}_t}.
\end{eqnarray*}

\section{$W$-entropy formulas on super Ricci flows on mm spaces}

In this section, we first state the dissipation formulas of the Boltzmann-Shannnon $H$-entropy associated with  the heat equation $\partial_t u=L u$ on metric measure spaces with time dependent metrics and potentials. Then we state the main results of this paper. The proofs will be given in Section 5. 

\subsection{$H$-entropy formulas on time dependent metric measure spaces}

We now state the $H$-entropy dissipation formulas  on closed metric measure spaces with time dependent metrics and potentials. In the Riemannian case, these formulas were proved by S. Li and the author in \cite{LiLi2015PJM}. 

\begin{theorem}\label{Th-B1}  Let $(X, d(t), {\bf g}(t), m_t, \phi_t)$ be a family of  time dependent closed RCD spaces. Let
$$d\mu_t=e^{-\phi_t}dm_t.$$
Suppose that $d\mu_t$ is independent of $t\in [0, T]$, i.e., $({\bf g}(t), m_t, \phi_t)$ satisfies the conjugate equation 
\begin{eqnarray}\label{conjugateHE5}
\frac{\partial \phi_t} {\partial t}={1\over 2}{\rm Tr}\left(
\frac{\partial {\bf g}_t}{\partial t}\right).
\end{eqnarray}
Let $u$ be a positive solution to the heat equation  $\partial_t u=L_t u$ associated with the time dependent Witten Laplacian $L_t=\Delta_{g_t}-\nabla_{g_t} \phi_t\cdot\nabla_{g_t}$. Suppose that 
$u\in W^{1, 2}(X,\mu)\cap D(L)\cap L^\infty(X, \mu)$ with $Lu\in L^\infty(X, \mu)$. Let
$$H(u)=-\int_X u\log u d\mu$$ 
be the Boltzmann-Shannon entropy associated with the time dependent heat equation $\partial_t u=L_t u$. Then
\begin{eqnarray}
{d\over dt} H(u(t)) &=&\int_X {|\nabla u|^2\over u}d\mu,\label{1H}\\
{d^2\over d t^2} H(u(t))&=&-2\int_X \left[|\nabla^2\log u|^2+\left({1\over 2}{\partial {\bf g}\over \partial t}+{\bf Ric_{\infty, n}}(L_t)\right)(\nabla \log u, \nabla \log u)\right]u d\mu.\label{2H}
\end{eqnarray}

\end{theorem}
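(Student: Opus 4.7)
The plan is to differentiate $H(u(t))$ directly under the integral sign, exploiting the fact that the conjugate equation (\ref{conjugateHE5}) is precisely the statement that the weighted measure $d\mu = e^{-\phi_t}dm_t$ is \emph{independent of $t$}. As a consequence, the reference measure $\mu$ is stationary, $L_t$ is self-adjoint with respect to $\mu$ for every $t$, the total mass is conserved (so $\int_X L_t u\,d\mu = 0$ on the closed mm-space), and the integration-by-parts formula $-\int_X \varphi\, L_t\psi\,d\mu = \int_X \langle \nabla \varphi, \nabla \psi\rangle_{{\bf g}_t}d\mu$ is available for all admissible pairs $\varphi, \psi$. For the first derivative (\ref{1H}), I would differentiate $\partial_t(u\log u) = (1+\log u)L_tu$ and integrate by parts:
\begin{equation*}
\frac{d}{dt}H(u) = -\int_X(1+\log u)L_tu\,d\mu = \int_X \langle \nabla \log u, \nabla u\rangle_{{\bf g}_t} d\mu = \int_X \frac{|\nabla u|^2}{u}\,d\mu.
\end{equation*}

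For the second derivative (\ref{2H}), set $f = \log u$. The chain rule $L_t e^f = e^f(L_tf + |\nabla f|^2)$ for the Witten Laplacian, combined with $\partial_t u = L_tu$, yields the evolution equation $\partial_t f = L_tf + |\nabla f|^2$. Starting from $\frac{d}{dt}H(u) = \int_X u\,|\nabla f|_{{\bf g}_t}^2\,d\mu$, a further differentiation produces three contributions: one from $\partial_t u = L_t u$; the desired $\int_X u\,(\partial_t {\bf g}_t)(\nabla f, \nabla f)\,d\mu$ coming from the time-derivative of the metric; and one from $2\langle \nabla f, \nabla \partial_t f\rangle = 2\langle \nabla f, \nabla L_tf\rangle + 2\langle \nabla f, \nabla |\nabla f|^2\rangle$. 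Using $\nabla u = u\nabla f$ together with the Hessian identity $\langle \nabla|\nabla f|^2, \nabla g\rangle = 2\nabla_t^2 f(\nabla f, \nabla g)$, an integration by parts collapses the $(L_tu)|\nabla f|^2$ contribution and the $\nabla |\nabla f|^2$ contribution into a single term $2\int_X u\, \nabla_t^2 f(\nabla f, \nabla f)\,d\mu$. The remaining $2\int_X u\langle \nabla f, \nabla L_tf\rangle\,d\mu$ is then handled by multiplying the distributional Bochner formula (\ref{DBHF}) by $u$, integrating over $X$, and integrating by parts once more in the $L_t|\nabla f|^2$ term; the Hessian-cubic terms cancel exactly, leaving precisely (\ref{2H}).

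The main obstacle is regularity. In the RCD framework the Hessian $\nabla_t^2 \log u$, the distributional Bochner identity (\ref{DBHF}) applied to $\log u$, and the commutation of time-derivative with gradient along the flow, namely $\partial_t \Gamma_t(f, g) = \Gamma_t(\partial_t f, g) + \Gamma_t(f, \partial_t g) + (\partial_t {\bf g}_t)(\nabla f, \nabla g)$, each require the solution $u$ and the underlying family $(X, d_t, {\bf g}_t, m_t, \phi_t)$ to be sufficiently smooth. Under the stated hypothesis $u \in W^{1,2}(X,\mu)\cap D(L)\cap L^\infty(X,\mu)$ with $L_tu \in L^\infty(X,\mu)$ on a closed mm-space, supplemented by the customary uniform positivity $u \geq c > 0$ that makes $\log u$ admissible, one performs the computation first for test functions in the sense of Gigli--Han--Sturm, where every object is pointwise well-defined, and then passes to the limit by standard approximation arguments as in \cite{Ko-Sturm18, Sturm18}.
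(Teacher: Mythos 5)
Your proposal is correct and follows essentially the same route as the paper: differentiate under the integral using that $d\mu$ is stationary, split the second derivative into the contribution from the time-varying metric and the fixed-metric part, reduce the latter to $-2\int_X \Gamma_2(L_t)(\log u,\log u)\,u\,d\mu$ by integration by parts, and conclude with the distributional Bochner formula \eqref{DBHF}; your remarks on regularity and approximation by test functions go slightly beyond what the paper records. The only slip is notational: since $\partial_t g^{ij}=-g^{ik}g^{jl}\partial_t g_{kl}$, the metric-derivative contribution is $-\int_X u\,(\partial_t {\bf g}_t)(\nabla f,\nabla f)\,d\mu$ (with a minus sign), which is indeed the term appearing in \eqref{2H}.
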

\medskip

As an easy consequence of Theorem \ref{Th-B1}, we have

\begin{theorem}\label{Th-B2}  Let $(X, d_t, {\bf g}_t, m_t, \phi_t, t\in [0, T])$ be a family of $n$-dimensional closed metric measure space  with time dependent metrics and potentials. 
Suppose that $({\bf g}_t, \phi_t)$ is a $(K, n, \infty)$-super Ricci flow and satisfies the conjugate equation, i.e.,
\begin{eqnarray*}
{\partial {\bf g}_t\over \partial t}&\geq &-2{\bf Ric_{\infty, n}}(L_t),\\
\frac{\partial\phi_t} {\partial t}&=&{1\over 2}{\rm Tr}\left(
\frac{\partial {\bf g}_t}{\partial t}\right).
\end{eqnarray*}
Let $u$ be a positive solution to the heat equation $\partial_t u=L_tu$ for the time dependent Witten Laplacian $L_t=\Delta_t-\nabla_t\phi_t\cdot\nabla_t$.  Suppose that 
$u\in W^{1, 2}(X,\mu)\cap D(L)\cap L^\infty(X, \mu)$ with $Lu\in L^\infty(X, \mu)$.
Let
$$H(u)=-\int_X u\log u d\mu$$
be the associated Boltzmann-Shannon entropy. Then
\begin{eqnarray*}
{d\over dt} H(u(t)) \geq 0,
\end{eqnarray*}
and
\begin{eqnarray*}
{d^2\over d t^2} H(u(t))\leq 0.
\end{eqnarray*}

\end{theorem}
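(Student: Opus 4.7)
The plan is to deduce Theorem \ref{Th-B2} directly from the $H$-entropy dissipation identities (\ref{1H}) and (\ref{2H}) already established in Theorem \ref{Th-B1}, invoking the super Ricci flow hypothesis purely as a sign check on the integrand of the second-derivative identity. The argument carries essentially no computational content of its own; it is a clean sign inspection once Theorem \ref{Th-B1} is in hand.

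For the monotonicity ${d\over dt}H(u(t))\geq 0$, identity (\ref{1H}) gives
\begin{equation*}
{d\over dt}H(u(t))=\int_X {|\nabla u|^2\over u}\,d\mu,
\end{equation*}
and since $u>0$, the integrand is pointwise non-negative $\mu$-a.e., so the conclusion is immediate. No curvature hypothesis enters at this step; only the conjugate equation (\ref{conjugateHE5}) is used, precisely to guarantee that $\mu=e^{-\phi_t}m_t$ is time-independent so that differentiating under the integral produces the clean expression (\ref{1H}).

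For the concavity ${d^2\over dt^2}H(u(t))\leq 0$, identity (\ref{2H}) reads
\begin{equation*}
{d^2\over dt^2}H(u(t))=-2\int_X \left[|\nabla^2\log u|^2+\left({1\over 2}{\partial {\bf g}\over \partial t}+{\bf Ric}_{\infty,n}(L_t)\right)(\nabla \log u,\nabla\log u)\right]u\,d\mu.
\end{equation*}
The super Ricci flow hypothesis, rewritten as ${1\over 2}\partial_t{\bf g}_t+{\bf Ric}_{\infty,n}(L_t)\geq 0$, asserts exactly that this symmetric bilinear form on the tangent module is non-negative; evaluating it on $(\nabla\log u,\nabla\log u)$ yields a pointwise non-negative quantity, and combined with the trivial bound $|\nabla^2\log u|^2\geq 0$ the bracketed integrand is non-negative $\mu$-a.e. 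The prefactor $-2$ then gives ${d^2\over dt^2}H(u(t))\leq 0$.

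The genuine obstacle here is already absorbed into Theorem \ref{Th-B1}, namely the rigorous realization of $\nabla^2\log u$ through the Gigli-type Hessian calculus and the Bakry-Emery $\Gamma_2$-machinery in the time-dependent RCD framework, together with a uniform positive lower bound on $u$ (via a parabolic minimum principle applied to $\partial_t u=L_t u$ with $u(0)>0$, $u\in L^\infty$ and $Lu\in L^\infty$) to certify that $\log u$ is an admissible test object in (\ref{2H}). Granted these regularity inputs, the deduction of Theorem \ref{Th-B2} from Theorem \ref{Th-B1} is nothing more than the sign inspection carried out above.
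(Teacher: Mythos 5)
Your proposal is correct and follows essentially the same route as the paper: Theorem \ref{Th-B2} is presented there as an immediate consequence of the dissipation identities \eqref{1H} and \eqref{2H} of Theorem \ref{Th-B1}, with the first derivative non-negative because the integrand $|\nabla u|^2/u$ is pointwise non-negative and the second derivative non-positive because the super Ricci flow condition makes the bracketed integrand in \eqref{2H} non-negative. Your additional remarks on the positivity of $u$ and the admissibility of $\log u$ address regularity points the paper leaves implicit, but they do not alter the argument.
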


\medskip

\subsection{$W$-entropy formulas on super Ricci flows on mm spaces}

We now state the main results of this paper.  Our first result extends Perelman's $W$-entropy formula to 
Sturm's $(K, N)$-super Ricci flows on closed metric measure spaces.  

\begin{theorem} \label{W-Sturm}  
Let $(X, d_t,  {\bf g}(t), m_t, \phi_t, t\in [0, T])$ be  a closed $(K, N)$ super Ricci flow on mm spaces with the conjugate equation \eqref{conjugate}, 
where $N\geq 1$ and $K\in \mathbb{R}$. 
Let $u$ be a positive solution to the heat equation $\partial_t u=L u$. Suppose that $u\in W^{1, 2}(X,\mu)\cap D(L)\cap L^\infty(X, \mu)$ with $Lu\in L^\infty(X, \mu)$. Then

\begin{equation}
\begin{aligned}
\frac{d}{d t} W_{N, K}(u) = -2t\int_X \Big[{1\over 2}{\partial {\bf g}_t\over \partial t}+{\bf \Gamma_2}(\log u, \log u )+\left(\frac{1}{t}-K\right)\Delta \log u+{N\over 4}\Big(\frac{1}{t}-K \Big)^2 -K|\nabla \log u |^2\Big]ud\mu. \label{W-Gamma2A}
\end{aligned}
\end{equation}
Moreover, we have
\begin{equation}
	\frac{d}{dt}W_{N,K}(u)\leq -\frac{2t}{N}\int_X u\Big(L \log u+\frac{N}{2t}-\frac{NK}{2}\Big)^2d\mu.\label{WKN}
\end{equation}
In particular, we have
\begin{equation*}
	\frac{d}{dt}W_{N,K}(u)\leq 0.
\end{equation*}
\end{theorem}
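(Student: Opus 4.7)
The plan is to extend the argument of Theorem \ref{Th-E} to the RCD setting by combining the $H$-entropy dissipation formulas of Theorem \ref{Th-B1} with the $N$-dimensional Bakry-Emery condition. Following the defining identity $W_{N,K}(u,t) = \tfrac{d}{dt}(tH_{N,K}(u,t))$, I would first expand
\begin{equation*}
\tfrac{d}{dt}W_{N,K}(u) = 2\,\partial_t H_{N,K}(u) + t\,\partial_t^2 H_{N,K}(u).
\end{equation*}
Taking $H_{N,K}(u,t) = H(u) - \tfrac{N}{2}\bigl(1+\log(4\pi t)\bigr) + \tfrac{N}{2}Kt\bigl(1-\tfrac{1}{6}Kt\bigr)$, with the sign of the $K$-correction reversed from \eqref{HmK} to match Sturm's convention that $K$ is a \emph{lower} bound on Ricci, an elementary computation verifies that the purely $t$-dependent deterministic part contributes exactly $-\tfrac{Nt}{2}\bigl(\tfrac{1}{t}-K\bigr)^2$ to the above expression.

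Next I would substitute the dissipation identities \eqref{1H}--\eqref{2H}, namely $\partial_t H(u) = \int_X u|\nabla\log u|^2\,d\mu$ and $\partial_t^2 H(u) = -2\int_X\bigl[\Gamma_2(\log u) + \tfrac{1}{2}(\partial_t {\bf g}_t)(\nabla\log u,\nabla\log u)\bigr]u\,d\mu$. Using the integration-by-parts identity $\int_X u|\nabla\log u|^2\,d\mu = -\int_X u\,L\log u\,d\mu$ together with the elementary rewriting $\tfrac{1}{t}L\log u = (\tfrac{1}{t}-K)\,L\log u + K\,L\log u$, the remaining terms reorganize precisely into the integrand of \eqref{W-Gamma2A}, which establishes the first identity.

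To prove the inequality \eqref{WKN}, I would invoke the $(K,n,N)$-super Ricci flow condition together with the distributional Bochner formula \eqref{DBHF} and the standard Cauchy-Schwarz decomposition $\tfrac{1}{N}(Lf)^2 \le \tfrac{1}{n}(\Delta f)^2 + \tfrac{1}{N-n}(\nabla\phi\cdot\nabla f)^2$ applied to $Lf = \Delta f - \nabla\phi\cdot\nabla f$. This gives the pointwise (measure-valued) inequality
\begin{equation*}
\Gamma_2(\log u) + \tfrac{1}{2}(\partial_t {\bf g}_t)(\nabla\log u,\nabla\log u) \ge K|\nabla\log u|^2 + \tfrac{1}{N}(L\log u)^2.
\end{equation*}
Inserting this bound into \eqref{W-Gamma2A} and completing the square,
\begin{equation*}
\tfrac{1}{N}(L\log u)^2 + \bigl(\tfrac{1}{t}-K\bigr)L\log u + \tfrac{N}{4}\bigl(\tfrac{1}{t}-K\bigr)^2 = \tfrac{1}{N}\bigl(L\log u + \tfrac{N}{2t} - \tfrac{NK}{2}\bigr)^2,
\end{equation*}
immediately yields \eqref{WKN} and hence the non-positivity of $\tfrac{d}{dt}W_{N,K}(u)$.

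The main obstacle lies in justifying these formal manipulations in the RCD setting: under the standing hypotheses $u\in W^{1,2}(X,\mu)\cap D(L)\cap L^\infty(X,\mu)$ with $Lu\in L^\infty(X,\mu)$, the function $\log u$ may be unbounded and need not a priori belong to the domain of $\Delta_t$ or carry a well-defined Hessian. This will be handled by a standard regularization procedure, replacing $u$ by $u+\varepsilon$, applying the chain rule for the gradient and Laplacian on RCD spaces as developed in \cite{Gigili2015MAMS, Han2018A, Han2018B}, and finally passing to the limit $\varepsilon\searrow 0$. A parallel subtlety is that Kopfer-Sturm's dynamic Bakry-Emery condition $(\mathbf{IV}_{K,N})$ is stated in integrated form involving $\tfrac{1}{N}(\int_X \Delta_r u_r g_r\,dm_r)^2$, so one must either appeal to the equivalent pointwise $(K,n,N)$-super Ricci flow formulation introduced in Section \ref{sect2} or test the dynamic Bochner inequality directly against the density $u\,d\mu$, using the infinitesimal Hilbertianity and the constancy of the geometric dimension $n$ to bridge between the two frameworks.
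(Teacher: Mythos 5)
Your proposal is correct and follows essentially the same route as the paper: expand $\tfrac{d}{dt}W_{N,K}=2H_{N,K}'+tH_{N,K}''$, substitute the dissipation identities \eqref{1H}--\eqref{2H}, integrate by parts via $\int_X u|\nabla\log u|^2\,d\mu=-\int_X u\,L\log u\,d\mu$ to reach the identity \eqref{W-Gamma2A}, and then complete the square after inserting the dynamic Bochner inequality. Your observation that the $K$-correction in $H_{N,K}$ must carry the opposite sign from \eqref{mmHNK} for the deterministic part to produce $-\tfrac{Nt}{2}(\tfrac1t-K)^2$ is exactly right and is implicitly what the paper uses. The one place where you diverge is the justification of the Bochner inequality: your primary route goes through the $(K,n,N)$-structural condition, the Hessian-based formula \eqref{DBHF}, and the Cauchy--Schwarz splitting $\tfrac1N(Lf)^2\le\tfrac1n(\Delta f)^2+\tfrac1{N-n}(\nabla\phi\cdot\nabla f)^2$, which presupposes more structure (a well-defined Hessian and constant geometric dimension $n$) than the theorem's hypothesis of a Sturm $(K,N)$-super Ricci flow; the paper instead directly asserts the distributional inequality $\tfrac12\partial_t{\bf g}_t+{\bf\Gamma}_2(L)(\log u,\log u)\ge\tfrac1N|L\log u|^2+K|\nabla\log u|^2$ as the content of the $(K,N)$-super Ricci flow condition, which is your stated fallback of testing the dynamic Bochner inequality against $u\,d\mu$. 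Your closing remarks on the regularity of $\log u$ and on the gap between Kopfer--Sturm's integrated form of $({\bf IV}_{K,N})$ (where the $\tfrac1N$-term is the square of an integral rather than the integral of a square) and the pointwise form actually needed are legitimate subtleties that the paper's proof passes over in silence, so flagging them strengthens rather than weakens your argument.
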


The following result extends the $W$-entropy formula due to S. Li and the author of this paper \cite{LiLi2015PJM} from the  $(0, m)$-super Ricci flows on smooth Riemannian manifolds to $(0, n, N)$-super Ricci flows on metric measure spaces.  

\begin{theorem} \label{mmW1} Let $(X, d(t), {\bf g}(t), m_t, \phi_t)$ be a family of  time dependent closed RCD spaces. Let
$$d\mu_t=e^{-\phi_t}dm_t.$$
Suppose that $d\mu_t$ is independent of $t\in [0, T]$, i.e., $({\bf g}(t), m_t, \phi_t)$ satisfies the conjugate equation \eqref{conjugateHE5}. Let $u$ be a positive solution to the heat equation  $\partial_t u=L_t u$ associated with the time dependent Witten Laplacian $L_t=\Delta_{g_t}-\nabla_{g_t} \phi_t\cdot\nabla_{g_t}$. 
Suppose that $u\in W^{1, 2}(X,\mu)\cap D(L)\cap L^\infty(X, \mu)$ with $Lu\in L^\infty(X, \mu)$.  Let
$$H(u)=-\int_X u\log u d\mu$$ 
be the Boltzmann-Shannon entropy associated with the time dependent heat equation $\partial_t u=L_t u$.  Let 

\begin{eqnarray*}
H_N(u, t)=-\int_X u\log u d\mu-{N\over 2}(1+\log(4\pi t)).
\end{eqnarray*}
Define
\begin{eqnarray*}
W_N(u, t)={d\over dt}(tH_N(u)).
\end{eqnarray*}
Then
\begin{eqnarray}
W_N(u, t)=\int_X \left[t|\nabla f|^2+f-N\right]ud\mu, \label{NWmm1}
\end{eqnarray}
and
\begin{eqnarray}
& &{d\over dt}W_N(u, t)=-2t\int_X \left\|\nabla^2 \log u+{{\bf g}_t\over 2t}\right\|_{\rm HS}^2ud\mu-{2t\over N-n}\int_X \left(\nabla \phi\cdot \nabla \log u-{N-n\over 2t}\right)^2  ud\mu\nonumber\\
& &\hskip4cm -2t\int_X \left({1\over 2}{\partial {\bf g}_t\over \partial t}+{\bf Ric_{N, n}}(L_t)\right)(\nabla \log u, \nabla \log u)ud\mu.\label{NWmm2}
\end{eqnarray}
As a consequence, we have
\begin{equation}
	\frac{d}{dt}W_{N}(u, t)\leq -\frac{2t}{N}\int_X u\Big(L\log u+\frac{N}{2t}\Big)^2d\mu.\label{WK0}
\end{equation}
In particular, it holds
\begin{equation*}
	\frac{d}{dt}W_{N}(u)\leq 0.
\end{equation*}

In particular, if $\{{\bf g}_t, \phi_t, t\in (0, T]\}$ is a $(0, n, N)$-super Ricci flow and satisfies the conjugate equation $(\ref{conjugateHE5})$, then $W_N(u, t)$ is decreasing in $t\in (0, T]$, i.e.,
\begin{eqnarray*}
{d\over dt}W_N(u, t)\leq 0, \ \ \ \forall t\in (0, T].
\end{eqnarray*}
Moreover, the left hand side in $(\ref{NWmm2})$ identically  equals to zero  on $(0, T]$ if and only if $(X, g(t), \phi(t), t\in (0, T])$ is a $(0, n, N)$-Ricci flow in the sense that

\begin{equation*}
\nabla^2 \log u+{{\bf g}\over 2t}=0, \ \ \ 
{1\over 2}{\partial {\bf g}\over \partial t}+{\bf Ric_{N, n}}(L_t)=0,\ \ \ \nabla \phi\cdot \nabla \log u={N-n\over 2t},
	\end{equation*}
and
\begin{eqnarray*}{\partial \phi\over \partial t}&=&{1\over 2} {\rm Tr}\left( {\partial g\over \partial t}\right).
\end{eqnarray*}
\end{theorem}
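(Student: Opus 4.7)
The plan is to derive Theorem~\ref{mmW1} directly from the $H$-entropy dissipation formulas \eqref{1H}--\eqref{2H} of Theorem~\ref{Th-B1}. Starting from the chain-rule identity $W_N(u,t) = H_N(u,t) + t\tfrac{d}{dt}H_N(u,t)$, I would use \eqref{1H} together with $\tfrac{d}{dt}H_N = I(u) - \tfrac{N}{2t}$ to obtain $W_N(u,t) = H(u) + tI(u) - \tfrac{N}{2}(2+\log(4\pi t))$. Substituting the parametrization $\log u = -f - \tfrac{N}{2}\log(4\pi t)$, and using $\int u\,d\mu = 1$ together with $|\nabla\log u|^2 = |\nabla f|^2$, makes the $\log(4\pi t)$ contributions cancel and yields the stated identity \eqref{NWmm1} for $W_N$.

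For the dissipation identity \eqref{NWmm2}, I would differentiate once more to get $\tfrac{d}{dt}W_N = 2I(u) + t\tfrac{d^2}{dt^2}H(u) - \tfrac{N}{2t}$ and substitute \eqref{2H}. The key algebraic step is a completion-of-squares: expand pointwise
\[
\bigl\|\nabla^2\log u + \tfrac{{\bf g}_t}{2t}\bigr\|_{\rm HS}^2 = \bigl\|\nabla^2\log u\bigr\|_{\rm HS}^2 + \tfrac{1}{t}\Delta_t\log u + \tfrac{n}{4t^2},
\]
\[
\tfrac{1}{N-n}\bigl(\nabla\phi\cdot\nabla\log u - \tfrac{N-n}{2t}\bigr)^2 = \tfrac{(\nabla\phi\cdot\nabla\log u)^2}{N-n} - \tfrac{1}{t}\nabla\phi\cdot\nabla\log u + \tfrac{N-n}{4t^2},
\]
and split the curvature term via ${\bf Ric}_{N,n}(L_t) = {\bf Ric}_{\infty,n}(L_t) - \tfrac{(\nabla\phi\cdot\nabla\cdot)^2}{N-n}$. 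After summing with the appropriate signs the quadratic-in-$\nabla\phi$ pieces cancel, the $\Delta_t\log u$ and $-\nabla\phi\cdot\nabla\log u$ cross terms fuse into $L_t\log u$, the remainders $\tfrac{n}{2t}$ and $\tfrac{N-n}{2t}$ combine into $\tfrac{N}{2t}$, and the integration-by-parts identity $\int u L_t\log u\,d\mu = -I(u)$ absorbs the residual $2I(u)$, reproducing \eqref{NWmm2} exactly.

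The inequality \eqref{WK0} and the monotonicity assertion then follow from two pointwise estimates: the Hessian-trace inequality $\|\nabla^2\log u + \tfrac{{\bf g}_t}{2t}\|_{\rm HS}^2 \geq \tfrac{1}{n}(\Delta_t\log u + \tfrac{n}{2t})^2$, combined with the weighted Cauchy-Schwarz inequality $\tfrac{a^2}{n} + \tfrac{b^2}{N-n} \geq \tfrac{(a+b)^2}{N}$ applied with $a = \Delta_t\log u + \tfrac{n}{2t}$ and $b = -\nabla\phi\cdot\nabla\log u + \tfrac{N-n}{2t}$, which sum precisely to $L_t\log u + \tfrac{N}{2t}$. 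Under the $(0,n,N)$-super Ricci flow hypothesis the curvature contribution in \eqref{NWmm2} is nonpositive, so \eqref{WK0} and the monotonicity of $W_N$ follow. The rigidity characterization is read off by tracing equality through each inequality: the two Cauchy-Schwarz steps force $\nabla^2\log u = -\tfrac{{\bf g}_t}{2t}$ and $\nabla\phi\cdot\nabla\log u = \tfrac{N-n}{2t}$, while vanishing of the curvature contribution gives $\tfrac{1}{2}\partial_t{\bf g}_t + {\bf Ric}_{N,n}(L_t) = 0$, with the conjugate equation \eqref{conjugateHE5} giving the evolution of $\phi_t$.

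The principal obstacle I anticipate lies in the non-smooth regularity: one needs $\log u$ to admit a Hessian in the sense of \cite{Gigili2015MAMS, Han2018A, Han2018B}, and the pointwise trace bound $\|\nabla^2\log u\|_{\rm HS}^2 \geq \tfrac{1}{n}(\Delta_t\log u)^2$ (and hence its shifted version above) to hold $m_t$-a.e., which relies on the $n$-geometric dimension assumption. Rigorous justification of the completion-of-squares identities and of the integration by parts for $L_t\log u$ requires a density/approximation argument, e.g.\ via test functions or semigroup mollification of $\log u$, exploiting the hypotheses $u, L_tu \in L^\infty(X,\mu)$ and the distributional Bochner formula \eqref{DBHF}; this is where the analytic subtleties of the RCD setting enter most seriously, and also the place where the $n$-dimensional rather than $\infty$-dimensional version of the calculus on tangent modules is essential.
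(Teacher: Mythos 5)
Your proposal is correct and follows essentially the same route as the paper: the paper likewise starts from the entropy dissipation formulas of Theorem \ref{Th-B1} (via the intermediate identity \eqref{W-Gamma2A}), invokes the distributional Bochner formula, splits $L\log u={\rm Tr}\nabla^2\log u+(L-{\rm Tr}\nabla^2)\log u$, completes the squares in exactly the way you describe, and obtains \eqref{WK0} from the same $\frac{a^2}{n}+\frac{b^2}{N-n}\geq\frac{(a+b)^2}{N}$ estimate. The only point to tighten is that the cross term in your expansion of $\|\nabla^2\log u+\frac{{\bf g}_t}{2t}\|_{\rm HS}^2$ is $\frac{1}{t}{\rm Tr}\nabla^2\log u$ rather than $\frac{1}{t}\Delta_t\log u$, so the second square should be built from the full defect $({\rm Tr}\nabla^2-L)\log u$ (which reduces to $\nabla\phi\cdot\nabla\log u$ only when ${\rm Tr}\nabla^2=\Delta_t$), exactly as the paper's bookkeeping does.
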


In general case $K\neq 0$, the following result extends the $W$-entropy formula due to S. Li and the author of this paper \cite{LiLi2015PJM} from the  $(K, m)$-super Ricci flows on smooth Riemannian manifolds to $(K, n, N)$-super Ricci flows on metric measure spaces.  

\begin{theorem}\label{mmW2} Let $(X, d(t), {\bf g}(t), m_t, \phi_t)$ be a family of  time dependent closed RCD spaces. Let
$$d\mu_t=e^{-\phi_t}dm_t.$$
Suppose that $d\mu_t$ is independent of $t\in [0, T]$, i.e., $({\bf g}(t), m_t, \phi_t)$ satisfies the conjugate equation \eqref{conjugateHE5}. Let $u$ be a positive solution to the heat equation  $\partial_t u=L_t u$ associated with the time dependent Witten Laplacian $L_t=\Delta_{g_t}-\nabla_{g_t} \phi_t\cdot\nabla_{g_t}$. Suppose that $u\in W^{1, 2}(X,\mu)\cap D(L)\cap L^\infty(X, \mu)$ with $Lu\in L^\infty(X, \mu)$. 
Let
$$H(u)=-\int_X u\log u d\mu$$ 
be the Boltzmann-Shannon entropy associated with the time dependent heat equation $\partial_t u=L_t u$.  Let 
\begin{eqnarray}
H_{N,K}(u, t)=-\int_X u\log u d\mu-{N\over 2}(1+\log(4\pi t))-\frac N2Kt\Big(1+\frac16Kt\Big),  \label{mmHNK}
\end{eqnarray}
and define
\begin{align}
W_{N,K}(u, t)={d\over dt}(tH_{m,K}(u)). \label{mmWNK}
\end{align}
Then
\begin{eqnarray*}
W_{N,K}(u, t)=\int_X\left[t|\nabla f |^2+f-N\Big(1+\frac12Kt\Big)^2\right]ud\mu,\label{WNK}
\end{eqnarray*}
and
\begin{align}\label{WnNK}
{d\over dt}W_{N,K}(u, t)&=-2 t\int_X\Big\|\nabla^2 f-\left(\frac1{2t}+\frac{K}{2}\right){\bf g}\Big\|_{\rm HS}^2u d\mu\nonumber\\
&\hskip1cm -\frac{2t}{m-n}\int_X\left(\nabla \phi\cdot\nabla \log u-(N-n)\Big(\frac1{2t}+\frac K2\Big)\right)^2u d\mu\nonumber\\
&\hskip1.5cm -2 t\int_X\left({1\over 2}{\partial {\bf g}_t\over \partial t}+{\rm Ric}_{N,n}(L)+K{\bf g}\right)(\nabla f, \nabla f) ud\mu.
\end{align}
In particular, if $(X, g(t), \phi(t), t\in (0, T])$ is a $(-K, n, N)$-super Ricci flow on metric measure space and satisfies the conjugate equation $(\ref{conjugateHE5})$, then $W_{N,K}(u, t)$ is decreasing in $t\in (0, T]$, i.e.,
\begin{eqnarray*}
{d\over dt}W_{N,K}(u, t)\leq 0, \ \ \ \forall t\in (0, T].
\end{eqnarray*}
Moreover, the left hand side in $(\ref{WnNK})$ identically  equals to zero on $(0, T]$ if and only if $(M, g(t), \phi(t), t\in (0, T])$ is a $(-K, n, N)$-Ricci flow in the sense that

\begin{equation*}
\nabla^2 \log u+{1\over 2}\left({1\over t}-K\right){\bf g}=0, \ \ \ 
{1\over 2}{\partial {\bf g}\over \partial t}+{\bf Ric_{N, n}}(L_t)=K{\bf g}, \ \ \ 
\nabla \phi\cdot\nabla \log u={N-n\over 2}\left({1\over t}-K\right),
	\end{equation*}
and
\begin{eqnarray*}{\partial \phi\over \partial t}&=&{1\over 2} {\rm Tr}\left( {\partial g\over \partial t}\right).
\end{eqnarray*}
\end{theorem}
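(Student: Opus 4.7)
The plan is to obtain both the explicit form of $W_{N,K}(u,t)$ and its time derivative \eqref{WnNK} by combining the already-established $H$-entropy formulas of Theorem \ref{Th-B1} with an algebraic completion-of-square argument in the spirit of Perelman and of Theorem \ref{Th-E}, adapted to the RCD setting via the distributional Bochner formula \eqref{DBHF}. First I would differentiate $tH_{N,K}(u)$ in $t$: writing $u = e^{-f}/(4\pi t)^{N/2}$ and using the first-order identity $\frac{d}{dt}H(u)=\int|\nabla\log u|^2 u\,d\mu$ from \eqref{1H}, the polynomial-in-$t$ correction terms of \eqref{mmHNK} combine cleanly to produce the shifted factor $N(1+Kt/2)^2$, yielding the stated expression for $W_{N,K}(u,t)$.

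Differentiating once more gives $\frac{d}{dt}W_{N,K} = 2H'_{N,K} + tH''_{N,K}$, and into $tH''_{N,K}$ I would substitute the second-order identity \eqref{2H} of Theorem \ref{Th-B1}, namely
\begin{equation*}
\tfrac{d^2}{dt^2}H(u) = -2\int_X\Big[\|\nabla^2\log u\|_{\rm HS}^2 + \Big(\tfrac{1}{2}\partial_t{\bf g}+{\bf Ric}_{\infty,n}(L_t)\Big)(\nabla\log u,\nabla\log u)\Big]u\,d\mu.
\end{equation*}
The key algebraic step is then completion of squares with the shift $\alpha := \tfrac{1}{2t}+\tfrac{K}{2}$. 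Expanding $\|\nabla^2 f - \alpha{\bf g}\|_{\rm HS}^2 = \|\nabla^2\log u\|_{\rm HS}^2 + 2\alpha\,\Delta\log u + n\alpha^2$ and $\tfrac{1}{N-n}(\nabla\phi\cdot\nabla\log u - (N-n)\alpha)^2 = \tfrac{1}{N-n}(\nabla\phi\cdot\nabla\log u)^2 - 2\alpha\,\nabla\phi\cdot\nabla\log u + (N-n)\alpha^2$, and recalling $L\log u = \Delta\log u - \nabla\phi\cdot\nabla\log u$, the cross terms rebuild a full $2\alpha L\log u$ which, upon integration by parts against $u\,d\mu$, contributes $-2\alpha\int|\nabla\log u|^2 u\,d\mu$. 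The constant $N\alpha^2$ absorbs exactly the leftover $t$-dependent constants from the $H''_{N,K}$ computation, while the $\tfrac{1}{N-n}(\nabla\phi\cdot\nabla\log u)^2$ piece converts ${\bf Ric}_{\infty,n}(L)$ into ${\bf Ric}_{N,n}(L)+K{\bf g}$ (modulo the $K|\nabla\log u|^2$ term). Reading off the three groups of terms yields \eqref{WnNK}.

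The monotonicity is then immediate: under the $(-K,n,N)$-super Ricci flow hypothesis, each of the three summands in \eqref{WnNK} is non-negative, and the prefactor $-2t<0$ gives $\frac{d}{dt}W_{N,K}(u,t)\leq 0$. For rigidity, equality at some $t_0>0$ forces each of the three non-negative integrands to vanish identically, which yields exactly the Hessian soliton identity $\nabla^2\log u = -\alpha{\bf g}$, the matching $\nabla\phi\cdot\nabla\log u = (N-n)\alpha$, and the saturated curvature-flow equation $\tfrac{1}{2}\partial_t{\bf g} + {\bf Ric}_{N,n}(L) + K{\bf g} = 0$; together with the conjugate equation \eqref{conjugateHE5} this is the definition of a $(-K,n,N)$-Ricci flow.

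The main obstacle is not the algebra but its rigorous justification on an RCD space. The object $\nabla^2\log u$ must be read in the $L^2$-Hessian-module sense of \cite{Gigili2015MAMS,Han2018A,Han2018B}, the Bochner identity \eqref{DBHF} is only a measure identity rather than a pointwise one, the trace identity ${\rm Tr}\nabla^2(\log u) = \Delta\log u$ and the integration-by-parts for $L_t$ must all be applied to $\log u$, which is only as regular as $u$ permits. The standing hypotheses $u\in W^{1,2}(X,\mu)\cap D(L)\cap L^\infty$ with $Lu\in L^\infty$ are precisely what is needed to justify these steps via test-function approximation, but the cleanest route is to note that the required second-order identity has already been proved at this regularity in Theorem \ref{Th-B1}, so the completion of squares in Step 3 operates on an already-justified identity and needs no further regularization.
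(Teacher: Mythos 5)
Your proposal follows essentially the same route as the paper: both start from the entropy dissipation identities of Theorem \ref{Th-B1} (equivalently the intermediate identity \eqref{W-Gamma2A}), invoke the distributional Bochner formula, and complete squares with the shift $\alpha=\frac{1}{2t}+\frac{K}{2}$ after splitting off the $(L-{\rm Tr}\nabla^2)\log u$ contribution, so the argument is correct and matches the paper's proof. The only point to tighten is that in the expansion of $\left\|\nabla^2 f-\alpha{\bf g}\right\|_{\rm HS}^2$ the cross term is $2\alpha\,{\rm Tr}\nabla^2\log u$ rather than $2\alpha\,\Delta\log u$ (these need not coincide on an RCD space), so the correct decomposition is $L\log u={\rm Tr}\nabla^2\log u+(L-{\rm Tr}\nabla^2)\log u$ as in the paper, which is exactly what produces the middle term of \eqref{WnNK}.
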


\medskip

\subsection{$W$-entropy formulas on static RCD spaces}

The above results can be also regarded as natural extensions of the corresponding $W$-entropy formulas on 
RCD spaces with fixed metrics and measure. 

\begin{theorem} [Kuwada-Li \cite{KL}, Li-Zhang \cite{Li-Zhang2025}]\label{KL0}
Let $(X, d, \mu)$ be a metric measure space satisfying the 
 RCD$(0, N)$-condition. Then
\begin{equation*}
	\frac{d}{dt}W_N(u)\leq 0.
\end{equation*}

Moreover, ${d\over dt}W_{N} (u(t))=0$ holds at some $t=t_*>0$ for the  fundamental solution of the heat equation $\partial_t u=\Delta u$ if and only if $(X, d, \mu)$ is one of the following rigidity models:

(i) If $N \geq 2$,  $(X, d, \mu)$ is $(0, N-1)$-cone over an $\operatorname{RCD}(N-2, N-1)$ space and $x$ is the vertex of the cone.

(ii) If $N<2$,  $(X, d, \mu)$ is isomorphic to either $\left([0, \infty), d_{\text {Eucl }}, x^{N-1} \mathrm{~d} x\right)$ or  $\left(\mathbb{R}, d_{\mathrm{Eucl}},|x|^{N-1} \mathrm{~d} x\right)$, where $d_{\text {Eucl }}$ is the canonical Euclidean distance. 

In each of the above cases, $W_N(u(t))$ is a constant on $(0, \infty)$, the Fisher information  $I(u(t))$ is given by $I(u(t))={N\over 2t}$ for all $t\in (0, \infty)$, and there exists some $x_0\in M$ such that
$$\Delta d^2(\cdot, x_0)=2N.$$ 

\end{theorem}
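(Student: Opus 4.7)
The plan is to realize this as the static case of Theorem \ref{mmW1} and isolate the single analytic input that is genuinely new in the RCD setting, namely the integrated Bochner inequality. Unpacking the definitions gives $W_N(u,t) = tI(u) + H(u) - \frac{N}{2}\log(4\pi t) - N$ with Fisher information $I(u)=\int|\nabla\log u|^2 u\,d\mu$, and the static version of the $H$-entropy dissipation formula \eqref{2H} (applied with $\partial_t{\bf g}=0$) supplies $\dot H(u)=I(u)$ and $\dot I(u) = -2\int \Gamma_2(\log u)\,u\,d\mu$ along the heat flow. Direct differentiation combined with the integration by parts identity $\int \Delta\log u\cdot u\,d\mu = -I(u)$ then rewrites
\begin{equation*}
\frac{d}{dt} W_N(u) \;=\; -2t\!\int\!\Gamma_2(\log u)\,u\,d\mu \;+\; 2I(u) \;-\; \frac{N}{2t}.
\end{equation*}
Invoking the integrated Bochner inequality $\int \Gamma_2(f) g\,d\mu \geq \frac{1}{N}\int (\Delta f)^2 g\,d\mu$, characteristic of RCD$(0,N)$ spaces, with $f=\log u$ and $g=u$, and then completing the square, immediately produces \eqref{WK0} and hence the monotonicity $\dot W_N \leq 0$.

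For the rigidity, suppose $\dot W_N|_{t=t_*}=0$ for the fundamental solution $u(t,\cdot)=p_t(\cdot,x_0)$. Both nonnegative summands in the chain of inequalities above must then vanish simultaneously at $t_*$: the completed square forces $\Delta\log u \equiv -\frac{N}{2t_*}$ $\mu$-almost everywhere, while the integrated Bochner inequality is saturated. By Savar\'e's self-improvement of the weak Bochner inequality in its dimensional $N$-refinement, saturation upgrades to the pointwise Hessian identity $\nabla^2 \log u = -\frac{1}{2t_*}{\bf g}$ in the weak sense on an RCD space. Setting $\varphi := -4t_*\log u + C$ with an appropriate normalizing constant produces $\nabla^2\varphi = 2{\bf g}$ globally, and a standard gradient-flow argument then identifies $\varphi$ with the squared distance $d^2(\cdot,x_0)$ to the base point of the heat kernel, yielding in particular the final conclusion $\Delta d^2(\cdot,x_0)=2N$.

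The existence of $\varphi$ with $\nabla^2\varphi = 2{\bf g}$ is precisely the hypothesis of the RCD cone rigidity theorem. For $N\geq 2$ it forces $(X,d,\mu)$ to be a $(0,N-1)$-metric cone with vertex $x_0$ over some RCD$(N-2,N-1)$ cross-section. For $N<2$ the cone construction degenerates (the putative cross-section would have effective dimension $<1$), and one instead invokes the low-dimensional RCD structure theory to show directly that the only admissible models are $([0,\infty),d_{\rm Eucl},x^{N-1}dx)$ and $(\mathbb{R},d_{\rm Eucl},|x|^{N-1}dx)$; in both models a direct calculation confirms $W_N(u(t))$ is constant and $I(u(t))=N/(2t)$. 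The principal obstacle is the step from the integrated Bochner equality to the pointwise Hessian identity: on a general RCD$(0,N)$ space $\nabla^2\log u$ is only defined as a weak object, so one has to deploy the self-improvement toolbox (Savar\'e, Han, Gigli) to upgrade the integrated saturation to pointwise information before any geometric rigidity can be extracted.
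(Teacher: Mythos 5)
Your derivation of the monotonicity half is correct and is essentially the paper's own argument: the paper obtains
$\frac{d}{dt}W_N(u)\leq -\frac{2t}{N}\int_X u\bigl(\Delta\log u+\frac{N}{2t}\bigr)^2\,d\mu$
as the static, $K=0$ specialization of Theorem \ref{W-Sturm}, whose proof is exactly your computation — the two entropy dissipation identities $\dot H=I$ and $\dot I=-2\int\Gamma_2(\log u)\,u\,d\mu$, the weak Bochner inequality $\int\Gamma_2(f)g\,d\mu\geq\frac1N\int(\Delta f)^2 g\,d\mu$ with $f=\log u$, $g=u$, and completion of the square via $\int\Delta\log u\cdot u\,d\mu=-I(u)$. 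Nothing to add there.

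The rigidity half is a different matter: the paper does not prove it, it cites Kuwada--Li and Li--Zhang, so there is no in-paper proof to compare against. Your outline is consistent with how that literature proceeds (vanishing of the completed square gives $\Delta\log u=-N/(2t_*)$ on $\{u>0\}=X$; saturation of the Bochner inequality plus self-improvement gives the Hessian identity; $\varphi=-4t_*\log u+C$ then satisfies $\nabla^2\varphi=2{\bf g}$ and the De Philippis--Gigli cone theorem applies), but as written it is a roadmap, with the two genuinely hard steps deferred to black boxes. First, the passage from the integrated equality $\int\bigl[\Gamma_2(\log u)-\frac1N(\Delta\log u)^2\bigr]u\,d\mu=0$ to the pointwise identity $\nabla^2\log u=-\frac{1}{2t_*}{\bf g}$ requires the measure-valued $\Gamma_2$ calculus and the self-improved dimensional Bochner inequality, together with strict positivity of the heat kernel to remove the weight $u$; you flag this correctly but do not execute it. Second, "a standard gradient-flow argument" identifying $\varphi$ with $d^2(\cdot,x_0)$ conceals the needed first-order information: one must derive $|\nabla\varphi|^2=4\varphi$ (by differentiating $|\nabla\varphi|^2-4\varphi$, whose gradient vanishes by the Hessian identity, and normalizing the constant at the minimum point of $\varphi$) before any cone rigidity statement can be invoked, and in the RCD setting even locating and characterizing that minimum point as the heat-kernel pole takes work. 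Neither step is a wrong turn, but neither is supplied, so the rigidity portion should be regarded as a correct plan rather than a proof.
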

		 
Recently, in a joint paper with Zhang \cite{Li-Zhang2025}, we proved the following $W$-entropy formula on 
RCD$(K, n, N)$ spaces. 
		 
\begin{theorem} [Li-Zhang  \cite{Li-Zhang2025}] \label{WnNK}  
Let $(X, d, \mu)$ be  an RCD$(K, n, N)$ space, where $n\in \mathbb{N}$, $N\geq n$ and $K\in \mathbb{R}$. Let $u$ be a positive solution to the heat equation $\partial_t u=\Delta u$ satisfying reasonable growth condition as required in  \cite{Li-Zhang2025}. Then
\begin{equation*}
\begin{aligned}
\frac{d}{d t} W_{N, K}(u)
&= -2t\int_X \left\|\nabla^2 \log u+{1\over 2}\left({1\over t}-K\right){\bf g}\right\|_{\rm HS}^2u d\mu\\	
& \hskip1cm -2t\int_X \left({\bf Ric_{N, n}}(L_t)-K{\bf g}\right)(\nabla \log u, \nabla \log u)u d\mu\\		
& \hskip1.5cm -{2t\over N-n}\int_X \Big[\Gamma_t(\phi, \log u)-{N-n\over 2}\left({1\over t}-K\right)\Big]^2u d\mu. \label{WfnKN}
\end{aligned}
\end{equation*}
Moreover, we have
\begin{equation*}
	\frac{d}{dt}W_{N,K}(u)\leq -\frac{2t}{N}\int_X \Big[L_t\log u+{N\over 2}\left({1\over t}-K\right)\Big]^2u d\mu.
\end{equation*}
In particular,  ${d\over dt}W_{N,K}(u)\leq0$, and ${d\over dt}W_{N,K} (u)=0$ holds at some $t>0$ if and only if at this $t$,

\begin{equation*}
\nabla^2 \log u+{1\over 2}\left({1\over t}-K\right){\bf g}=0, \ \ \ 
{\bf Ric_{N, n}}(L_t)=K{\bf g},\ \ \ 
\nabla\phi\cdot\nabla \log u={N-n\over 2}\left({1\over t}-K\right).
	\end{equation*}

\end{theorem}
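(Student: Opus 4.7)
The plan is to deduce the equality formula and the subsequent inequality directly from the $H$-entropy dissipation pair (Theorem \ref{Th-B1}) specialized to the static case $\partial_t {\bf g}=0$, $\partial_t\phi=0$, combined with the dimensional Bochner inequality inherent to RCD$(K,n,N)$. Concretely, set $h(t):=H(u(t))=-\int_X u\log u\, d\mu$. Along the heat flow $\partial_tu=\Delta u=L u$ (the two notations coincide for the canonical operator on $(X,d,\mu)$), the same integration-by-parts and distributional Bochner computation used in Theorem \ref{Th-B1} yields
\begin{equation*}
h'(t)=\int_X|\nabla\log u|^2 u\, d\mu,\qquad
h''(t)=-2\int_X\bigl[\|\nabla^2\log u\|_{HS}^2+{\bf Ric}_{\infty,n}(L)(\nabla\log u,\nabla\log u)\bigr]u\, d\mu,
\end{equation*}
where the second identity uses the distributional Bochner formula \eqref{DBHF} for $L$ on the RCD$(K,n,N)$ space.

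From the definition \eqref{mmHNK}--\eqref{mmWNK} I would expand
\begin{equation*}
W_{N,K}(u,t)=h(t)+t h'(t)-N\bigl(1+\tfrac{Kt}{2}\bigr)^2-\tfrac{N}{2}\log(4\pi t),
\end{equation*}
differentiate once more in $t$, and substitute the formulas for $h'$ and $h''$. The decomposition ${\bf Ric}_{\infty,n}(L)={\bf Ric}_{N,n}(L)+\tfrac{(\nabla\phi\cdot\nabla\log u)^2}{N-n}{\bf g}$ splits the Ricci term into the $N$-Bakry--\'Emery tensor plus an extra $(\nabla\phi\cdot\nabla\log u)^2$ contribution. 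The key algebraic step is then to recognize that the surplus constant/linear pieces in $\tfrac1t-K$ combine with $\|\nabla^2\log u\|_{HS}^2$ and the $\nabla\phi$-term into the two manifest squares displayed in the theorem; this is verified by expanding
\begin{equation*}
\bigl\|\nabla^2\log u+\tfrac{1}{2}\bigl(\tfrac1t-K\bigr){\bf g}\bigr\|_{HS}^2+\tfrac{1}{N-n}\bigl[\nabla\phi\cdot\nabla\log u-\tfrac{N-n}{2}\bigl(\tfrac1t-K\bigr)\bigr]^2
\end{equation*}
and matching against the cross-term $(\tfrac1t-K)L\log u$ that arises after using $\int_X L\log u\, u\, d\mu=-h'(t)$. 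This completes the sought identity for $\tfrac{d}{dt}W_{N,K}$.

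To deduce the $\tfrac{1}{N}$-inequality, I would apply the two-step dimensional Cauchy--Schwarz
\begin{equation*}
\|\nabla^2\log u\|_{HS}^2\geq\tfrac{1}{n}(\Delta\log u)^2,\qquad
\tfrac{(\Delta\log u)^2}{n}+\tfrac{(\nabla\phi\cdot\nabla\log u)^2}{N-n}\geq\tfrac{(L\log u)^2}{N},
\end{equation*}
the second of which is the elementary identity $[(N-n)A+NB]^2\geq 0$ with $A=L\log u$ and $B=\nabla\phi\cdot\nabla\log u$; once completed by the analogous cross-term arithmetic, it matches the square $[L\log u+\tfrac{N}{2}(\tfrac1t-K)]^2$. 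The Ricci term is then discarded using the lower bound ${\bf Ric}_{N,n}(L)\geq K{\bf g}$. The rigidity statement is read off from the chain of equalities: saturation forces simultaneously (i) $\nabla^2\log u+\tfrac{1}{2}(\tfrac1t-K){\bf g}=0$, (ii) $\nabla\phi\cdot\nabla\log u=\tfrac{N-n}{2}(\tfrac1t-K)$, and (iii) the Ricci pairing equals $K|\nabla\log u|^2$ $u\, d\mu$-a.e.

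The main obstacle is not algebraic but analytic: justifying the second-derivative formula for $h(t)$ and the integration by parts in the RCD$(K,n,N)$ setting. One needs a priori that $\log u$ lies in the relevant Sobolev classes so that $\nabla^2\log u$ is well-defined with $u\, d\mu$-integrable Hilbert--Schmidt norm, and that the distributional Bochner formula can be tested against $u\, d\mu$. This is where the ``reasonable growth condition'' assumed in \cite{Li-Zhang2025} plays a crucial role; it is handled via the self-improvement of the Bakry--\'Emery $\Gamma_2$-calculus on RCD$(K,n,N)$ spaces, standard truncation/mollification of $\log u$ through test functions, and the regularizing effect of the heat semigroup, following the scheme already established in Kuwada--Li \cite{KL} for $K=0$ and extended in \cite{Li-Zhang2025} to arbitrary $K$.
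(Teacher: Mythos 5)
Your proposal is correct and follows essentially the same route as the paper: the entropy dissipation identities, the distributional Bochner formula, the decomposition of ${\bf Ric}_{\infty,n}(L)$ into ${\bf Ric}_{N,n}(L)$ plus the $\frac{(\nabla\phi\cdot\nabla\log u)^2}{N-n}$ term, completion of the two squares against the cross term $(\frac1t-K)L\log u$, and the dimensional Cauchy--Schwarz step $\frac{(\Delta\log u)^2}{n}+\frac{(\nabla\phi\cdot\nabla\log u)^2}{N-n}\geq\frac{(L\log u)^2}{N}$ are exactly the ingredients of the paper's argument in Section 5.3 (with the paper writing $({\rm Tr}\nabla^2-L)\log u$ where you write $\nabla\phi\cdot\nabla\log u$). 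The only blemish is the stray factor ${\bf g}$ in your displayed decomposition of ${\bf Ric}_{\infty,n}(L)$, which should be read as an identity between quadratic forms evaluated at $\nabla\log u$.
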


In the particular case $K=0$, we have the following 

\begin{theorem} [Li-Zhang \cite{Li-Zhang2025}, see also Brena \cite{Brena2025}] \label{WnNK=0}  
Let $(X, d, \mu)$ be  an RCD$(0, n, N)$ space, where $n\in \mathbb{N}$ and $N\geq n$. Let $u$ be a positive solution to the heat equation $\partial_t u=\Delta u$ satisfying reasonable growth condition as required in \cite{Li-Zhang2025}. Then
\begin{equation*}
\begin{aligned}
\frac{d}{d t} W_{N}(u)
&= -2t\int_X \Big[\left\|\nabla^2 \log u+{{\bf g}\over 2t}\right\|_{\rm HS}^2+{\bf Ric_{N, n}}(L_t)(\nabla \log u, \nabla \log u)\Big]u d\mu\\		
& \hskip2cm -{2t\over N-n}\int_X \Big[\Gamma_t(\phi_t, \log u)-{N-n\over 2t}\Big]^2u d\mu.\label{WfnKN}
\end{aligned}
\end{equation*}
In particular, on any RCD$(0, n, N)$ space, we have
\begin{equation*}\label{dW=0}
	\frac{d}{dt}W_{N}(u)\leq -\frac{2t}{N}\int_X \Big[L_t \log u+{N\over 2t}\Big]^2u d\mu \leq 0.
\end{equation*}
%
\end{theorem}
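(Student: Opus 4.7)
The plan is to derive this theorem as the specialization $K=0$ of the preceding $W$-entropy formula of Li--Zhang on RCD$(K,n,N)$ spaces, and then to extract the stated differential inequality from the resulting identity by combining a trace inequality for the Hessian with Cauchy--Schwarz.

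First, setting $K=0$ in the definitions of $H_{N,K}$ and $W_{N,K}$, the correction $-\tfrac{N}{2}Kt(1+\tfrac{1}{6}Kt)$ vanishes, so $H_{N,0}=H_N$ and $W_{N,0}(u,t)=W_N(u,t)$. Substituting $K=0$ into the identity for $\tfrac{d}{dt}W_{N,K}(u)$ in the preceding theorem collapses the Hessian shift $\tfrac{1}{2}(\tfrac{1}{t}-K){\bf g}$ to $\tfrac{\bf g}{2t}$ and the accompanying shift in the $\Gamma_t(\phi_t,\log u)$ term to $\tfrac{N-n}{2t}$, while removing the $K{\bf g}$ correction of the Ricci term. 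This produces precisely the stated identity for $\tfrac{d}{dt}W_N(u)$.

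For the announced inequality, the RCD$(0,n,N)$ hypothesis gives ${\bf Ric}_{N,n}(L_t)(\nabla \log u,\nabla \log u)\geq 0$, so the Ricci contribution is non-positive after multiplication by $-2t$. Since the geometric dimension of $(X,d,\mu)$ equals the constant $n$ we have ${\rm Tr}\,{\bf g}=n$, and the elementary pointwise matrix inequality $\|A\|_{\rm HS}^2\geq \tfrac{1}{n}({\rm Tr}\,A)^2$ applied to $A=\nabla^2 \log u+\tfrac{\bf g}{2t}$ gives
\begin{equation*}
\left\|\nabla^2 \log u+\frac{\bf g}{2t}\right\|_{\rm HS}^2 \;\geq\; \frac{1}{n}\left(\Delta_t \log u+\frac{n}{2t}\right)^2.
\end{equation*}
Setting $a=\Delta_t \log u+\tfrac{n}{2t}$ and $b=-\Gamma_t(\phi_t,\log u)+\tfrac{N-n}{2t}$, we have $a+b=L_t \log u+\tfrac{N}{2t}$ because $L_t=\Delta_t-\Gamma_t(\phi_t,\cdot)$; the Cauchy--Schwarz inequality then yields $\tfrac{a^2}{n}+\tfrac{b^2}{N-n}\geq \tfrac{(a+b)^2}{N}$. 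Assembling these bounds with the Ricci estimate gives the announced differential inequality and, in particular, the monotonicity $\tfrac{d}{dt}W_N(u)\leq 0$.

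The main obstacle is technical rather than conceptual: it lies in rigorously justifying the pointwise trace inequality $\|A\|_{\rm HS}^2\geq \tfrac{1}{n}({\rm Tr}\,A)^2$ in the RCD setting, which requires working within Gigli's tangent-module calculus and Han's local-dimension theory to ensure that ${\rm Tr}\,\nabla^2 \log u$ coincides with $\Delta \log u$ $\mu$-almost everywhere and that the Hilbert--Schmidt norm is computed against an $n$-dimensional local basis of the tangent module. Once this identification is properly set up, the remainder of the argument is purely algebraic and follows directly from the preceding theorem.
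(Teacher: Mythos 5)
Your proposal is correct and follows essentially the same route the paper takes for the companion results: the identity is exactly the $K=0$ specialization of the preceding Li--Zhang formula (the paper itself cites \cite{Li-Zhang2025} for this statement, and its own proofs of Theorems \ref{mmW1}--\ref{mmW2} use the same splitting $L\log u={\rm Tr}\nabla^2\log u+(L-{\rm Tr}\nabla^2)\log u$ followed by completion of squares), and the inequality is obtained, as you do, from ${\bf Ric}_{N,n}(L)\geq 0$ together with $\|A\|_{\rm HS}^2\geq \tfrac{1}{n}({\rm Tr}A)^2$ and the elementary bound $\tfrac{a^2}{n}+\tfrac{b^2}{N-n}\geq\tfrac{(a+b)^2}{N}$. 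Your closing caveat about identifying ${\rm Tr}\nabla^2\log u$ with $\Delta\log u$ in the tangent-module framework is the right technical point to flag, and it is handled in the paper only by appeal to the distributional Bochner formula and the constancy of the geometric dimension.
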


\section{Proofs of theorems}

\subsection{Proof of Theorem \ref{Th-B1} and Theorem \ref{Th-B2} }
The proof is similar the one in \cite{LiLi2015PJM}. On closed metric measure space, direct calculation yields
\begin{eqnarray*}
{\partial\over \partial t} H(u(t))=-\int_X \partial_t u(\log u+1)d\mu=-\int_X Lu (\log u+1)d\mu.
\end{eqnarray*}
Integrating by parts yields
\begin{eqnarray*}
{\partial\over \partial t} H(u, t)=\int_X |\nabla\log u|^2_{g(t)} ud\mu=\int_X {|\nabla u|_{g(t)}^2\over u} d\mu.
\end{eqnarray*}

Furthermore, as ${\partial\over \partial t} (d\mu)=0$, we have
\begin{eqnarray}
{\partial^2\over \partial t^2}H(u, t)&=&\int_X {\partial\over \partial t}(|\nabla\log u|^2_{g(t)} u)d\mu\nonumber\\
&=&\int_X \left[{\partial \over \partial t}g^{ij}\nabla_i\log u\nabla_j\log u\right]u d\mu+\int_X {\partial \over \partial t}\left[{|\nabla u|^2\over u}\right]_{\rm g(t)\ fixed}d\mu\nonumber\\
&=&\int_X \left[-{\partial \over \partial t}g_{ij}\nabla_i\log u\nabla_j\log u\right]u d\mu+\int_X {\partial \over \partial t}\left[{|\nabla u|^2\over u}\right]_{\rm g(t)\ fixed}d\mu\nonumber\\
&=&\int_X \left(-{\partial g\over \partial t}(\nabla\log u, \nabla u)+{\partial \over \partial t}\left[{|\nabla u|^2\over u}\right]_{\rm g(t)\ fixed}\right)d\mu,\label{aaa}
\end{eqnarray}
where $[\cdot]_{\rm g(t)\ fixed}$ means that the quantity $|\nabla u|^2$ in $[\cdot]$ is defined under a fixed metric $g(t)$, 
and in the third step we use the facts  $|\nabla \log u|^2=g^{ij}\nabla_i\log u\nabla_j\log u$ implies
$$\partial_t g^{ij}=-\partial_t g_{ij}.$$

On the other hand, on closed mm space with fixed metric $g(t)$ and with time independent measure $d\mu$, 
similarly to the proof of  the entropy dissipation formula in \cite{BE1985, Li2012, Li2016SPA, Li-Zhang2025}, we have
\begin{eqnarray}
\int_X
& &{\partial \over \partial t}\left[{|\nabla u|^2\over u}\right]_{\rm g(t)\ fixed}d\mu
=\int_X
{\partial \over \partial t}\left[|\nabla \log u|^2 u\right]_{\rm g(t)\ fixed}d\mu\nonumber\\
&=&\int_X\left[2\langle \nabla \log u, \nabla {L u\over u}\rangle  u+|\nabla \log u|^2 Lu\right]_{\rm g(t)\ fixed}d\mu\nonumber\\
&=&\int_X\left[2\langle \nabla \log u, \nabla (L \log u+|\nabla \log u|^2)\rangle  u+|\nabla \log u|^2 Lu\right]_{\rm g(t)\ fixed}d\mu\nonumber\\
&=&\int_X 2\langle \nabla \log u, \nabla L \log u\rangle ud\mu+\int_M \left[ 2\langle\nabla u, 
\nabla |\nabla \log u|^2 
\rangle + L|\nabla \log u|^2 u\right]_{\rm g(t)\ fixed}d\mu\nonumber\\
&=&\int_X 2\langle \nabla \log u, \nabla L \log u\rangle ud\mu+\int_M \left[ -2L u |\nabla \log u|^2 
+ L|\nabla \log u|^2 u\right]_{\rm g(t)\ fixed}d\mu\nonumber\\
&=&\int_X\left[2\langle \nabla \log u, \nabla L \log u\rangle-L|\nabla \log u|^2 
\right]_{\rm g(t)\ fixed}ud\mu\nonumber\\
&=&-2\int_X \Gamma_2(L_t)(\log u, \log u)ud\mu\nonumber.
\end{eqnarray}
By the distributional Bochner formula \eqref{DBHF}, we have
\begin{eqnarray}
\int_X
{\partial \over \partial t}\left[{|\nabla u|^2\over u}\right]_{\rm g(t)\ fixed}d\mu
=-2\int_X \left[\|\nabla^2\log u\|^2_{\rm HS}+{\bf Ric_{\infty, n}}(L_t)(\nabla \log u, \nabla \log u)\right]ud\mu.\label{bbb}
\end{eqnarray}
Combining $(\ref{aaa})$ and $(\ref{bbb})$, we complete the proofs of Theorem \ref{Th-B1} and Theorem \ref{Th-B2} . \hfill $\square$

\medskip

\subsection{Proof of Theorem \ref{W-Sturm} }
 \medskip

By the definition formula of $W_{K, N}$ and the entropy dissipation identities $(\ref{1H})$ and $(\ref{2H})$ in Theorem \ref{Th-B1}, we have  
\begin{equation}
\begin{aligned}
&\frac{d}{d t} W_{N, K}(u) =tH''+2H'-\frac{N}{2t}+N K\left(1-\frac{K t}{2}\right)\\
&=-2t\int_X \left[{1\over 2}{\partial {\bf g}_t\over \partial t}+{\bf \Gamma_2}(L)(\log u, \log u )\right]ud\mu
+2\int_X|\nabla\log u |^2 ud\mu-\frac{N}{2t}+N K\left(1-\frac{K t}{2}\right)\\
&= -2t\int_X \Big[{1\over 2}{\partial {\bf g}_t\over \partial t}+{\bf \Gamma_2}(L)(\log u, \log u )
+\left(\frac{1}{t}-K\right)L \log u+{N\over 4}\Big(\frac{1}{t}-K \Big)^2 -K|\nabla \log u |^2\Big]ud\mu. \label{W-Gamma2A}
\end{aligned}
\end{equation}
Under Sturm's $(K, N)$-super Ricci flow, the weak Bochner inequality holds in the sense of 
distribution
\begin{equation}
\begin{aligned}
{1\over 2}{\partial {\bf g}_t\over \partial t}+{\bf \Gamma_2}(L)(\log u, \log u )\geq {|L\log u|^2\over N}+K|\nabla 
\log u|^2. \label{weakBochner}
\end{aligned}
\end{equation}
Therefore

\begin{equation*}
\begin{aligned}
\frac{d}{d t} W_{N, K}(u) 
&\leq -2t\int_X \Big[ {|L\log u|^2\over N}+K |\nabla \log u|^2+\left(\frac{1}{t}-K\right)L \log u+{N\over 4}\Big(\frac{1}{t}-K \Big)^2 -K|\nabla \log u |^2\Big] ud\mu\\
			&= -\frac{2t}{N} \int_X\left[L \log u +{N\over 2}\left(\frac{1}{t}-K\right)\right]^2ud\mu.
\end{aligned}
\end{equation*} This finishes the proof of Theorem \ref{W-Sturm}. \hfill $\square$

\medskip

As a corollary, we have the following result which was originally proved by Kuwada and Li \cite{KL}. 

\begin{corollary} (i.e., Theorem \ref{KL0})
Let $(X, d, \mu)$ be an 
 RCD$(0, N)$ space and $u$ be a positive solution to the heat equation $\partial_t u=\Delta u$. Then
\begin{equation*}
	\frac{d}{dt}W_N(u)\leq -\frac{2t}{N}\int_X u\Big(\Delta \log u+\frac{N}{2t}\Big)^2d\mu.\label{WW}
\end{equation*}
In particular, we have
\begin{equation*}
	\frac{d}{dt}W_N(u)\leq 0.
\end{equation*}
\end{corollary}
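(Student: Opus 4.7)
The plan is to obtain this corollary as an immediate specialization of Theorem~\ref{W-Sturm} to the \emph{stationary} case with $K = 0$. The key observation is that any RCD$(0,N)$ space $(X, d, \mu)$ can be viewed as a trivially time-dependent metric measure space by setting $d_t \equiv d$, $\mathbf{g}_t \equiv \mathbf{g}$, $m_t \equiv \mu$, and $\phi_t \equiv 0$ for every $t \in [0, T]$. Under this choice $\partial_t \mathbf{g}_t \equiv 0$, and the weak dimensional Bochner inequality \eqref{weakBochner} that drives the proof of Theorem~\ref{W-Sturm}, namely
\[
\tfrac{1}{2}\partial_t \mathbf{g}_t + \mathbf{\Gamma}_2(L)(\log u, \log u) \geq \frac{|L \log u|^2}{N} + K|\nabla \log u|^2,
\]
reduces, on setting $K=0$, to exactly the defining weak Bochner inequality for RCD$(0,N)$ recalled in Section~3.

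First I would verify that all hypotheses of Theorem~\ref{W-Sturm} hold in this stationary setting. The conjugate equation \eqref{conjugate} is trivial, since both $\partial_t \phi_t$ and $\tfrac{1}{2}\mathrm{Tr}(\partial_t \mathbf{g}_t)$ vanish. Because $\phi_t \equiv 0$, the time-dependent Witten Laplacian satisfies $L_t = \Delta$, so the evolution $\partial_t u = L_t u$ coincides with the heat equation $\partial_t u = \Delta u$ appearing in the corollary. The regularity assumptions $u \in W^{1,2}(X,\mu) \cap D(L) \cap L^\infty(X, \mu)$ with $Lu \in L^\infty(X, \mu)$ are standard properties of positive solutions of the heat semigroup on RCD$(0,N)$ spaces, following from the Ambrosio--Gigli--Savar\'e theory.

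Once the hypotheses are in place, I would substitute $K = 0$ directly into inequality \eqref{WKN} of Theorem~\ref{W-Sturm} to obtain
\[
\frac{d}{dt} W_N(u) \leq -\frac{2t}{N} \int_X u \Big(L \log u + \frac{N}{2t}\Big)^2 d\mu = -\frac{2t}{N} \int_X u \Big(\Delta \log u + \frac{N}{2t}\Big)^2 d\mu,
\]
which is precisely the claimed inequality. The monotonicity $\tfrac{d}{dt} W_N(u) \leq 0$ follows immediately from non-positivity of the right-hand side.

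There is essentially no substantive obstacle: the corollary is a pure specialization, and the real analytic work has already been absorbed into the proof of Theorem~\ref{W-Sturm}. The only point worth flagging is the matching observation above, that the stationary super Ricci flow condition with $K=0$ coincides exactly with the RCD$(0,N)$ curvature-dimension condition, so no hypothesis beyond the one stated in the corollary is implicitly invoked.
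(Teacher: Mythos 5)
Your proposal is correct and follows exactly the route the paper intends: the paper derives this corollary by presenting it immediately after Theorem \ref{W-Sturm} as the stationary ($\partial_t \mathbf{g}_t\equiv 0$, $\phi_t\equiv 0$) specialization with $K=0$, under which \eqref{WKN} becomes the stated inequality with $L=\Delta$. Your additional remarks on the trivial verification of the conjugate equation and on the regularity hypotheses only make explicit what the paper leaves implicit.
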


\subsection{Proof of Theorem \ref{mmW1} and Theorem \ref{mmW2}}

Under the condition that the Riemannian Bochner formula $(\ref{RBFnN})$ holds, we can prove Theorem 
\ref{mmW1} and Theorem \ref{mmW2} by the same argument as used in Li \cite{Li2012} and S. Li-Li \cite{LiLi2015PJM} for the $W$-entropy formulas   on Riemannian manifolds with CD$(K, m)$-condition and closed $(K, m)$-super Ricci flows. See also Li-Zhang \cite{Li-Zhang2025} in which the authors proved the $W$-entropy formula on RCD$(K, n, N)$ spaces. For the completeness of the paper, we  reproduce the proof as follows.  By $(\ref{W-Gamma2A})$, we have
	\begin{equation*}
\begin{aligned}
\frac{d}{d t} W_{N, K}(u)
&= -2t\int_X \Big[{1\over 2}{\partial {\bf g}_t\over \partial t}+{\bf \Gamma_2}(L)(\log u, \log u )+\left(\frac{1}{t}-K\right)L \log u
+{N\over 4}\Big(\frac{1}{t}-K \Big)^2 -K|\nabla \log u |^2\Big]ud\mu\\
&=-2t\int_X\left[{1\over 2}{\partial {\bf g}_t\over \partial t}+\left\|\nabla^2 \log u\right\|^2_{\rm HS}+\operatorname{Ric}(L)(\nabla \log u, \nabla \log u ) \right]u d\mu\\
& \hskip2cm +2\int_X|\nabla\log u |^2 ud\mu-\frac{N}{2t}+N K\left(1-\frac{K t}{2}\right).
\end{aligned}
\end{equation*}
Splitting 
$$L \log u={\rm Tr}\nabla^2 \log u+(L-{\rm Tr} \nabla^2) \log u,$$
we have
\begin{equation*}
\begin{aligned}
\frac{d}{d t} W_{N, K}(u) &= -2t\int_X \Big[{1\over 2}{\partial {\bf g}_t\over \partial t}+{\bf \Gamma_2}(L)(\log u, \log u )
+\left(\frac{1}{t}-K\right){\rm Tr}\nabla^2 \log u+{n\over 4}\Big(\frac{1}{t}-K \Big)^2-K|\nabla \log u |^2\Big]ud\mu\\
&\hskip1cm -2t\int_X \left[\left(\frac{1}{t}-K\right)(L -{\rm Tr}\nabla^2)\log u 
+{N-n\over 4}\Big(\frac{1}{t}-K \Big)^2 \right]ud\mu.
\end{aligned}
\end{equation*}
By assumption,  the Riemannian Bochner formula $(\ref{RBFnN})$ holds in the sense of distribution. Thus
\begin{eqnarray*}
& &{\bf \Gamma_2}(L)(\log u, \log u)+\left({1\over t}-K\right){\rm Tr}\nabla^2 \log u+{n\over 4}\left({1\over t}-K\right)^2\\
& &\hskip1.5cm =\left\|\nabla^2 \log u+{1\over 2}\left({1\over t}-K\right){\bf g}\right\|^2_{\rm HS}+\operatorname{\bf Ric_{N, n}}(L)(\nabla\log u, \nabla\log u)+{|(L-{\rm Tr}\nabla^2)\log u|^2\over N-n}.\end{eqnarray*}
This yields 
\begin{equation*}
\begin{aligned}
\frac{d}{d t} W_{N, K}(u, t)&=  -2t \int_X \left[\left\|\nabla^2 \log u+{1\over 2}\left({1\over t}-K\right){\bf g}\right\|^2_{\rm HS}+\left({1\over 2}{\partial {\bf g}_t\over \partial t}+\operatorname{\bf Ric_{N, n}}(L)-K{\bf g}\right)(\nabla \log u, \nabla \log u) \right]u d \mu\\
& \hskip0.5cm -2t\int_X \left[{|(L-{\rm Tr}\nabla^2)\log u)|^2\over N-n} +\left({1\over t}-K\right)(L-{\rm Tr}\nabla^2) \log u+{N-n\over 4}\left(\frac{1}{ t}-K\right)^2 \right]ud\mu\\
&=-  2 t\int_X \left[ \left\|\nabla^2 \log u+{1\over 2}\left({1\over t}-K\right){\bf g}\right\|^2_{\rm HS}+\left({1\over 2}{\partial {\bf g}_t\over \partial t}+\operatorname{\bf Ric_{N, n}}(L)-K{\bf g}\right)(\nabla \log u, \nabla \log u)\right] u d \mu\\
& \hskip0.5cm -\frac{2 t}{N-n} \int_X\left[({\rm Tr}\nabla^2-L)\log u-\frac{N-n}{2} \left(\frac{1}{ t}-K\right)\right]^2 u d \mu.
\end{aligned}
\end{equation*}
This completes the proof of Theorem \ref{mmW1} and Theorem \ref{mmW2}. \hfill $\square$

\section{Shannon entropy power on super Ricci flows on mm spaces}

We now prove the $(-2K)$-concavity of the Shannon entropy power 
 on closed $(K, n, N)$ super Ricci flows.  In the setting of smooth closed $(K, m)$-super Ricci flows or complete Riemannian manifolds with CD$(K, m)$-condition, it has been proved in S. Li-Li \cite{LiLi2024TMJ}.  See 
 \cite{Sh, Costa, Cover} for the background of Shannon entropy power in information theory. 

\begin{theorem}\label{EDIKM}
	Let $(X, d_t, g_t, m_t, \phi_t, t\in [0, T])$ be a family of $n$-dimensional  
 closed metric measure spaces with time dependent metrics and potentials satisfying the conjugate equation \eqref{conjugate}.  Let $u$ be a solution to the heat equation $\partial_t u=L u$, and $H(u)=-\int_X u \log u d\mu$ be the Shannon entropy.  Then 

\begin{align*}
	\frac{1}{2}H''+\frac{{H'}^2}{N} &=
-\frac{1}{N} \int_X \Big[L \log u-\int_Xu  L\log u d\mu \Big]^2ud\mu-\int_X 
\left({1\over 2}{\partial g\over \partial t}+\Ric_{N, n}(L)\right)(\nabla \log u, \nabla \log u) ud\mu\\
&\hskip1cm -\int_X \left\|\nabla^2 \log u-\frac{\Delta\log u} {n}g\right\|^2_{\rm HS}ud\mu-{N-n\over Nn}
\int_X \Big[L\log u+{N\over N-n}\nabla \phi\cdot\nabla \log u\Big]^2ud\mu.
\end{align*}
As a consequence, on every closed $(K, n, N)$-super Ricci flow, the following Riccatti  entropy differential inequality holds
	\begin{equation}\label{EDI}
		H''+\frac{2}{N}H'^2+2KH'\leq 0,
	\end{equation}
	Equivalently, 
the Shannon entropy power $\mathcal{N}(u)=e^{2H(u)\over N}$ is $(-2K)$-concave on every closed 
$(K, n, N)$ super Ricci flow, i.e.,  
\begin{equation*}
{d^2\mathcal{N} \over dt^2} \leq -2K{d\mathcal{N}\over dt}.		
	\end{equation*}
	In particular,  when $K=0$, we have
	\begin{equation*}
{d^2\mathcal{N} \over dt^2} \leq 0.		
	\end{equation*}
	Equivalently, the Shannon entropy power $\mathcal{N}(u)=e^{2H(u)\over N}$ is concave on every closed 
$(0, n, N)$ super Ricci flow. 
\end{theorem}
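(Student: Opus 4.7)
The plan is to start from the second-derivative formula for the Boltzmann-Shannon entropy provided by Theorem \ref{Th-B1},
\[
\tfrac{1}{2}H''(u) = -\int_X \Bigl[\|\nabla^2\log u\|_{\rm HS}^2 + \bigl(\tfrac{1}{2}\partial_t{\bf g}_t + {\bf Ric}_{\infty,n}(L_t)\bigr)(\nabla\log u, \nabla\log u)\Bigr]u\,d\mu,
\]
and then to perform two algebraic rearrangements that upgrade the $\infty$-dimensional Bochner term to the claimed $(K,n,N)$-form. First, I would split the Hilbert-Schmidt norm of the Hessian into its trace and traceless parts,
\[
\|\nabla^2\log u\|_{\rm HS}^2 = \Bigl\|\nabla^2\log u - \tfrac{\Delta\log u}{n}{\bf g}\Bigr\|_{\rm HS}^2 + \tfrac{(\Delta\log u)^2}{n},
\]
and convert ${\bf Ric}_{\infty,n}(L_t)$ into ${\bf Ric}_{N,n}(L_t)$ via the defining relation of Section 3, namely ${\bf Ric}_{\infty,n}(L_t) = {\bf Ric}_{N,n}(L_t) + \tfrac{(\nabla\phi_t\cdot\nabla\log u)^2}{N-n}$.

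The second rearrangement is the pointwise Cauchy-Schwarz identity
\[
\frac{a^2}{n} + \frac{b^2}{N-n} = \frac{(a+b)^2}{N} + \frac{N-n}{Nn}\Bigl(a - \tfrac{n}{N-n}b\Bigr)^2,
\]
applied with $a = \Delta\log u$ and $b = -\nabla\phi_t\cdot\nabla\log u$, so that $a+b = L_t\log u$ and $a - \tfrac{n}{N-n}b = L_t\log u + \tfrac{N}{N-n}\nabla\phi_t\cdot\nabla\log u$. This collapses the two pointwise squares produced above into $\tfrac{(L_t\log u)^2}{N}$ plus the advertised $\tfrac{N-n}{Nn}[\,\cdots\,]^2$ remainder. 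Finally, I would recenter the $\tfrac{(L_t\log u)^2}{N}$ integral using the variance decomposition
\[
\int_X u(L\log u)^2 d\mu = \Bigl(\int_X u L\log u\,d\mu\Bigr)^2 + \int_X u\Bigl[L\log u - \int_X u L\log u\,d\mu\Bigr]^2 d\mu,
\]
together with the identity $H'(u) = -\int_X u\,L\log u\,d\mu$, which follows by integration by parts from $H'(u) = \int_X|\nabla\log u|^2 u\,d\mu$ in Theorem \ref{Th-B1}. Putting the pieces together yields exactly the first displayed identity of the theorem.

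For the Riccatti inequality, I invoke the $(K,n,N)$-super Ricci flow hypothesis $\tfrac{1}{2}\partial_t{\bf g}_t + {\bf Ric}_{N,n}(L_t) \geq K{\bf g}_t$ to bound the corresponding term from above by $-K\int_X|\nabla\log u|^2 u\,d\mu = -KH'$; the other three terms on the right are manifestly non-positive, and a rearrangement produces $H'' + \tfrac{2}{N}(H')^2 + 2KH' \leq 0$. For the Shannon entropy power $\mathcal{N} = e^{2H/N}$, a direct computation gives $\mathcal{N}' = \tfrac{2H'}{N}\mathcal{N}$ and $\mathcal{N}'' = \tfrac{2}{N}\bigl[H'' + \tfrac{2}{N}(H')^2\bigr]\mathcal{N}$, so the Riccatti inequality translates at once into $\mathcal{N}'' \leq -2K\mathcal{N}'$; the assertion for $K=0$ is then immediate.

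The main obstacle is not the algebra, which is essentially forced once the Cauchy-Schwarz identity above is recognized, but rather the rigorous justification, at the regularity of $\log u$ on a time-dependent RCD space, of the underlying distributional Bochner formula, the pointwise trace/traceless decomposition of $\nabla^2\log u$, and all the integration-by-parts steps. These technicalities are, however, largely absorbed into the statement of Theorem \ref{Th-B1} and the framework developed for ${\bf Ric}_{N,n}(L_t)$ in Section 3, so in practice the work reduces to the algebraic rearrangements and the integrated Cauchy-Schwarz identity described above.
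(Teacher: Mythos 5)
Your proposal is correct and follows essentially the same route as the paper: starting from the second-order entropy dissipation formula of Theorem \ref{Th-B1}, decomposing the Bochner term via the trace/traceless split of the Hessian together with the weighted Cauchy--Schwarz identity that produces the $\tfrac{(L\log u)^2}{N}$ and $\tfrac{N-n}{Nn}[\cdots]^2$ terms, recentering with the variance decomposition and $H'=-\int_X u\,L\log u\,d\mu$, and then invoking the $(K,n,N)$-super Ricci flow condition to discard the manifestly signed terms. The only cosmetic difference is that you make the intermediate algebraic identity explicit where the paper writes the combined decomposition in a single display.
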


 \begin{proof} The proof is similar to the case of smooth $(K, m)$-super Ricci flow given by S. Li-Li \cite{LiLi2024TMJ}. Indeed, by \eqref{1H} and \eqref{2H} in Theorem \ref{Th-B1}, we have
 \begin{align*}
	-\frac{1}{2}H''&=\int_X \Gamma_2(\nabla \log u,\nabla \log u )ud\mu\\
	&=\int_X\Big[\|\nabla^2 \log u \|^2_{\operatorname{HS}}+\left({1\over 2}{\partial {\bf g}\over \partial t}+
	\Ric(L)\right)(\nabla \log u,\nabla \log u)\Big] ud\mu\\
	&=\int_X\Big[\frac{(L\log u)^2}{N}+\left({1\over 2}{\partial {\bf g}\over \partial t}+\Ric_{N, n}(L)\right)(\nabla \log u,\nabla \log u)+\left\|\nabla^2 \log u-\frac{\Delta\log u}{n}{\bf g}\right\|^2_{\operatorname{HS}}\Big]ud\mu\\
	& \hskip2cm +{N-n\over Nn}\int_X \Big[L\log u+{N\over N-n}\nabla \phi\cdot\nabla \log u\Big]^2ud\mu\\
	&=\frac{1}{N}\Big(\int_X|\nabla \log u |^2 ud\mu\Big)^2 +\frac{1}{N} \int_X \Big[L \log u-\int_Mu  L\log u d\mu \Big]^2ud\mu\\
	&\hskip1cm +\int_X\Big[\left({1\over 2}{\partial {\bf g}\over \partial t}+\Ric_{N, n}(L)\right)(\nabla \log u,\nabla \log u)+\left\|\nabla^2 \log u-\frac{\Delta\log u}{n}{\bf g}\right\|^2_{\operatorname{HS}}\Big]ud\mu\\
	& \hskip2cm +{N-n\over Nn}\int_X\Big[L\log u+{N\over N-n}\nabla \phi\cdot\nabla \log u\Big]^2ud\mu.
\end{align*}
This yields
\begin{align*}
	\frac{1}{2}H''+\frac{{H'}^2}{N} &=
-\frac{1}{N} \int_X \Big[L \log u-\int_Xu  L\log u d\mu \Big]^2ud\mu-\int_X 
\left({1\over 2}{\partial {\bf g}\over \partial t}+\Ric_{N, n}(L)\right)(\nabla \log u, \nabla \log u) ud\mu\\
&\hskip1cm -\int_X \left\|\nabla^2 \log u-\frac{\Delta\log u} {n}{\bf g}\right\|^2_{\rm HS}ud\mu-{N-n\over Nn}
\int_X \Big[L\log u+{N\over N-n}\nabla \phi\cdot\nabla \log u\Big]^2ud\mu.
\end{align*}
Thus, on every closed $(K, n, N)$ super Ricci flow, we have
\begin{align*}
	\frac{1}{2}H''+\frac{{H'}^2}{N} &\leq -K\int_X|\nabla \log u|^2 ud\mu -\int_X \Big[\left\|\nabla^2 \log u-\frac{\Delta\log u}{n}{\bf g}\right\|^2_{\operatorname{HS}}\Big]ud\mu\\
	& \ \ \ -\frac{1}{N} \int_X\Big[L \log u-\int_X L\log u ud\mu \Big]^2ud\mu.
\end{align*}
The Ricatti EDI reads 
\begin{align*}
	\frac{1}{2}H''+\frac{{H'}^2}{N} +KH'&\leq  -\int_X \Big[\left\|\nabla^2 \log u-\frac{\Delta\log u}{n}{\bf g}\right\|^2_{\operatorname{HS}}\Big]ud\mu-\frac{1}{N} \int_X\Big[L \log u-\int_X L\log u ud\mu \Big]^2ud\mu.
\end{align*}
\quad\quad
In particular, we have
\begin{align*}
	\frac{1}{2}H''+\frac{H'^2}{N}+KH'\leq -{1\over N}\int_X \left|\Delta \log u-\int_X \Delta\log u ud
	\mu\right|^2 ud\mu,\label{BNK6}
\end{align*}
which implies the Riccatti EDI $(\ref{EDI})$ in Theorem \ref{EDIKM}. 
\end{proof}

We can also derive an upper bound for the Fisher information on closed $(K, n, N)$ super Ricci flows.
 
\begin{theorem}\label{Ibound}On every closed $(K, n, N)$ super Ricci flow, we have
\begin{equation}
	I(u(t))=\frac{d}{dt}H(u(t))\leq \frac{NK}{e^{2Kt}-1}. \label{IKbound}
\end{equation}
In particular, on every closed $(0, n, N)$ super Ricci flow, we have
\begin{equation}
	I(u(t))=\frac{d}{dt}H(u(t))\leq \frac{N}{2t}. \label{IKbound0}
\end{equation} 
\end{theorem}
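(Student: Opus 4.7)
The plan is to obtain Theorem \ref{Ibound} as a direct consequence of the Riccati entropy differential inequality \eqref{EDI} established in Theorem \ref{EDIKM}. Writing $I(t) := H'(u(t)) = \int_X |\nabla_t \log u|^2\, u\, d\mu$ for the Fisher information along the heat flow, inequality \eqref{EDI} reads
\[
I'(t) + \frac{2}{N}\,I(t)^2 + 2K I(t) \le 0.
\]
Since $I(t) \ge 0$, the desired bound is an elementary ODE comparison against the function $J(t) := NK/(e^{2Kt}-1)$ (resp.\ $J(t) := N/(2t)$ when $K=0$), which one checks directly satisfies the corresponding Riccati equation with equality and blows up as $t \to 0^+$.

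The cleanest route is to linearize by the substitution $\phi(t) := 1/I(t)$, valid at times where $I(t) > 0$. The inequality transforms into the linear one
\[
\phi'(t) - 2K\phi(t) \ge \frac{2}{N}.
\]
Multiplying by the integrating factor $e^{-2Kt}$ and integrating on $(s, t) \subset (0, T)$ gives
\[
e^{-2Kt}\phi(t) \ge e^{-2Ks}\phi(s) + \frac{e^{-2Ks} - e^{-2Kt}}{NK}.
\]
Sending $s \to 0^+$ and using only $\phi(s) \ge 0$ (since $I \ge 0$), one obtains $\phi(t) \ge (e^{2Kt}-1)/(NK)$, which is exactly the stated bound \eqref{IKbound}. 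The case $K = 0$ is analogous: $\phi'(t) \ge 2/N$ directly yields $\phi(t) \ge 2t/N$ and hence \eqref{IKbound0}; it can also be recovered by sending $K \to 0$ in \eqref{IKbound}.

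The main technical issue is the boundary behavior at $t = 0^+$. The argument above sidesteps any precise initial value by relying only on $\liminf_{s \to 0^+}\phi(s) \ge 0$, which is automatic. One must still know that $I(t) > 0$ on the interval under consideration in order to form $\phi = 1/I$; this is standard, since if $I(t_0) = 0$ at some $t_0 \in (0,T)$, the Riccati inequality forces $I'(t_0) \le 0$, and combined with $I \ge 0$ this implies $I \equiv 0$ on $[t_0, T]$ (the density is then constant on $X$ at time $t_0$), in which case both bounds hold trivially. Thus the whole proof reduces to the elementary linear integration outlined above.
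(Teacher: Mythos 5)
Your proposal is correct and follows essentially the same route as the paper: the paper's proof of Theorem \ref{Ibound} consists of invoking the Riccati entropy differential inequality \eqref{EDI} from Theorem \ref{EDIKM} and referring to Li--Li for the (omitted) ODE comparison, which is exactly the computation you carry out via $\phi=1/I$ and the integrating factor $e^{-2Kt}$. Your treatment of the degenerate case is sound (indeed $e^{2Kt}I(t)$ is non-increasing, so $I(t_0)=0$ forces $I\equiv 0$ on $[t_0,T]$), and you correctly supply the boundary analysis at $t\to 0^+$ that the paper leaves implicit.
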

\begin{proof} Based on the Riccatti Entropy Differential Inequality $(\ref{EDI})$,  the proof of Theorem \ref{Ibound} has been essentially given by S. Li-Li \cite{LiLi2024TMJ}. To save the length of the paper, we omit the detail.
\end{proof}

%
%

Closely related to the above Riccatti entropy differential inequality \eqref{EDI}, 
we have the following result which extends the logarithmic  entropy formula (see Ye \cite{Ye} and Wu \cite{Wu}) to $(K, n, N)$ super Ricci flows on mm spaces. 

\begin{theorem}\label{log}
	Let $(X, d, \mu)$ be a closed
$(K, n, N)$ super Ricci flow on mm spaces and $u$ be a positive solution to the heat equation $\partial_t u=L u$. Assume that $a$ is a constant such that $\frac{1}{4}\int_X \frac{|\nabla u |_w^2}{u}d\mu +a>0$. 
Define the logarithmic entropy $\mathcal{Y}_a(u,t)$ as follows
\begin{equation*}
	\mathcal{Y}_a(u,t):=-\int_X u \log u d\mu +\frac{N}{2}\log\Big(\frac{1}{4}\int_X \frac{|\nabla u |_w^2}{u} +a\Big)+(NK-4a)t.
\end{equation*}
Then, we have
	\begin{equation*}
		\frac{d\mathcal{Y}_a}{dt}\leq - \frac{1}{4\omega}\int_X \Big[L\log u+4\omega\Big]^2 u d\mu+{aNK\over w},
	\end{equation*}
	where $\omega=\frac{1}{4}\int_X \frac{|\nabla u |_w^2}{u}d\mu +a$.
	In particular, when $K=0$, it holds
	\begin{equation*}
		\frac{d\mathcal{Y}_a}{dt}\leq - \frac{1}{4\omega}\int_X \Big[L\log u+4\omega\Big]^2 u d\mu\leq 0.
	\end{equation*}
\end{theorem}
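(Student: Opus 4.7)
The plan is to derive the inequality directly from the sharper Riccatti-type entropy differential inequality established during the proof of Theorem \ref{EDIKM}. Let me first list the ingredients. Set $H(t)=-\int_X u\log u\,d\mu$ and recall that, on a closed mm space with time-independent measure,
\begin{equation*}
H'(t)=\int_X \frac{|\nabla u|_w^2}{u}\,d\mu,\qquad \int_X L\log u\cdot u\,d\mu=-H'(t),
\end{equation*}
the second identity following by integration by parts against $u$ and the chain rule $\nabla\log u=\nabla u/u$. In particular $\omega=H'/4+a$ satisfies $4\omega=H'+4a$ and $\omega'=H''/4$. From the proof of Theorem \ref{EDIKM} one reads off the \emph{sharp} form of the Riccatti EDI on closed $(K,n,N)$-super Ricci flows, namely
\begin{equation*}
\tfrac{1}{2}H''+\tfrac{1}{N}(H')^2+KH'\leq -\tfrac{1}{N}\int_X\bigl[L\log u-\overline{L\log u}\bigr]^2 u\,d\mu=-\tfrac{1}{N}\int_X\bigl[L\log u+H'\bigr]^2 u\,d\mu,
\end{equation*}
which is what powers the improvement over the bare inequality $H''+\tfrac{2}{N}(H')^2+2KH'\leq 0$.

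Next I would differentiate $\mathcal{Y}_a$ directly to obtain
\begin{equation*}
\frac{d\mathcal{Y}_a}{dt}=H'+\frac{N}{2}\cdot\frac{\omega'}{\omega}+NK-4a=H'+\frac{NH''}{8\omega}+NK-4a.
\end{equation*}
Substituting the sharp Riccatti EDI for $H''$ and multiplying by $N/(8\omega)$ yields
\begin{equation*}
\frac{NH''}{8\omega}\leq \frac{1}{\omega}\left[-\tfrac{1}{4}(H')^2-\tfrac{NK}{4}H'-\tfrac{1}{4}\int_X\bigl[L\log u+H'\bigr]^2 u\,d\mu\right].
\end{equation*}
Then I would convert $L\log u+H'$ into $L\log u+4\omega$ by writing $L\log u+H'=(L\log u+4\omega)-4a$ and using $\int_X(L\log u+H')u\,d\mu=0$, which gives the key identity
\begin{equation*}
\int_X\bigl[L\log u+H'\bigr]^2 u\,d\mu=\int_X\bigl[L\log u+4\omega\bigr]^2 u\,d\mu-16a^2.
\end{equation*}

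Putting everything together and collecting the terms not involving the square integral, the remainder is
\begin{equation*}
H'+NK-4a+\frac{-(H')^2/4-NKH'/4+4a^2}{\omega},
\end{equation*}
and a short algebraic calculation (clearing the denominator and using $\omega=H'/4+a$) shows that the numerator collapses to $aNK$, so this remainder equals $aNK/\omega$. This produces exactly
\begin{equation*}
\frac{d\mathcal{Y}_a}{dt}\leq \frac{aNK}{\omega}-\frac{1}{4\omega}\int_X\bigl[L\log u+4\omega\bigr]^2 u\,d\mu,
\end{equation*}
and setting $K=0$ kills the first term, yielding monotonicity of $\mathcal{Y}_a$. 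The only subtlety I expect is the correct extraction of the sharp (variance-type) form of the Riccatti EDI from Theorem \ref{EDIKM}; once this is in hand the rest is a purely algebraic completion of squares which is routine but must be performed carefully to match the normalization $\omega=H'/4+a$ used in the definition of $\mathcal{Y}_a$.
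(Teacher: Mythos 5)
Your proposal is correct and follows essentially the same route as the paper's first proof: both rest on the variance-strengthened Riccati entropy differential inequality from Theorem \ref{HRiccati} (equivalently the sharp form in the proof of Theorem \ref{EDIKM}), the shift identity $\int_X[L\log u+H']^2u\,d\mu=\int_X[L\log u+4\omega]^2u\,d\mu-16a^2$, and the algebraic collapse of the remaining terms to $aNK/\omega$ using $4\omega=H'+4a$. The only cosmetic difference is that the paper carries the full equality \eqref{LLHK} with its extra nonpositive terms and discards them at the end via the super Ricci flow condition, whereas you invoke the inequality form up front; the computations are otherwise identical and your algebra checks out.
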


We give two proofs of Theorem \ref{log}.  The first one follows the same argument as used in the proof of Theorem 5.1 in \cite{LiLi2024TMJ}, where S. Li and the author proved the Shannon entropy power formula on complete Riemannian manifolds with CD$(K, N)$ condition. The second one uses the same argument as used in Ye \cite{Ye} and Wu \cite{Wu}.  To save the length of the paper, we omit the second one. See Li-Zhang \cite{Li-Zhang2025} for the second proof on RCD$(K, n, N)$ spaces.  

	We first prove the following result on RCD$(K, n, N)$ space, which was first proved by S.Li and the first named author \cite{LiLi2024TMJ} on complete Riemannian manifolds with bounded geometry condition.

\begin{theorem}\label{HRiccati} On every  metric measure space with time dependent metrics and potentials satisfying the conjugate equation \eqref{conjugate}, we have

\begin{equation}\label{LLHK}
\begin{split}
&H''+{2\over N}H'^2+2KH'\\
&=
-2\int_X \left[\left\|\nabla^2\log u-{{\rm Tr}\nabla^2\log u\over n}{\bf g}\right\|_{\rm HS}^2+\left({1\over 2}{\partial {\bf g}\over \partial t}+{\bf Ric_{N, n}}(L)-K{\bf g}\right)(\nabla \log u, \nabla \log u)\right] ud\mu\\
& \hskip0.5cm  -{2(N-n)\over Nn}\int_X \left[L \log u+{N\over N-n}({\rm Tr}\nabla^2-L)\log  u\right]^2ud\mu -{2\over N}\int_X \left[L \log u-\int_X L \log u ud\mu\right]^2 ud\mu.
	\end{split}
\end{equation}
In particular, on any $(K, n, N)$ super Ricci flow we have

\begin{equation*}
\begin{split}
H''+{2\over N}H'^2+2KH'\leq -{2\over N}\int_X \left[L \log u-\int_X L \log u ud\mu\right]^2 ud\mu.
	\end{split}
\end{equation*}

\end{theorem}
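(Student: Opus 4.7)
My plan is to start from the second-order entropy identity \eqref{2H} of Theorem \ref{Th-B1} and reshape its integrand by a sequence of algebraic decompositions until the quantities $\tfrac{2}{N}H'^2$ and $2KH'$ appear naturally on the right-hand side of \eqref{LLHK}. First I would apply the distributional Bochner formula \eqref{DBHF} to $f=\log u$, so that the integrand of $-\tfrac{1}{2}H''$ reads
$$\|\nabla^2\log u\|_{\rm HS}^2+\Big(\tfrac{1}{2}\partial_t{\bf g}+{\bf Ric}_{\infty,n}(L)\Big)(\nabla\log u,\nabla\log u).$$
I would then split the Hilbert--Schmidt norm into its ${\bf g}$-traceless and trace parts,
$$\|\nabla^2\log u\|_{\rm HS}^2=\Big\|\nabla^2\log u-\tfrac{{\rm Tr}\nabla^2\log u}{n}{\bf g}\Big\|_{\rm HS}^2+\tfrac{({\rm Tr}\nabla^2\log u)^2}{n},$$
and, using the identity $\nabla\phi\cdot\nabla\log u=({\rm Tr}\nabla^2-L)\log u$ together with the definition of ${\bf Ric}_{N,n}(L)$, I would decompose
$${\bf Ric}_{\infty,n}(L)(\nabla\log u,\nabla\log u)={\bf Ric}_{N,n}(L)(\nabla\log u,\nabla\log u)+\tfrac{\big(({\rm Tr}\nabla^2-L)\log u\big)^2}{N-n}.$$

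The core algebraic step is the elementary completing-the-square identity
$$\frac{(a+b)^2}{n}+\frac{b^2}{N-n}=\frac{a^2}{N}+\frac{N-n}{Nn}\Big(a+\tfrac{N}{N-n}b\Big)^2,$$
which I would apply with $a=L\log u$ and $b=({\rm Tr}\nabla^2-L)\log u$, since $a+b={\rm Tr}\nabla^2\log u$. This converts the sum of the trace-square $\tfrac{({\rm Tr}\nabla^2\log u)^2}{n}$ and the $\phi$-correction $\tfrac{b^2}{N-n}$ into precisely $\tfrac{(L\log u)^2}{N}$ plus the $\tfrac{N-n}{Nn}$-weighted defect square appearing in \eqref{LLHK}. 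To pass from the pointwise $(L\log u)^2$ to $H'^2$, I would use self-adjointness of $L$ together with conservation of $\int u\,d\mu=1$ to write $H'=-\int_X L\log u\,u\,d\mu$, and then apply the variance decomposition
$$\int_X(L\log u)^2 u\,d\mu=\Big(\int_X L\log u\,u\,d\mu\Big)^2+\int_X\Big(L\log u-\int_X L\log u\,u\,d\mu\Big)^2u\,d\mu,$$
which produces the $\tfrac{2}{N}H'^2$ contribution together with the variance remainder. Finally, adding $2KH'=2K\int_X{\bf g}(\nabla\log u,\nabla\log u)u\,d\mu$ to both sides absorbs into the curvature bracket and turns ${\bf Ric}_{N,n}(L)$ into ${\bf Ric}_{N,n}(L)-K{\bf g}$, yielding \eqref{LLHK}. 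The announced inequality on a $(K,n,N)$ super Ricci flow is then immediate since the trace-free Hessian term and the super-Ricci-flow defect term on the right-hand side of \eqref{LLHK} are both manifestly nonnegative.

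The principal obstacle is regularity rather than algebra. On a genuine RCD space the Hessian $\nabla^2\log u$, its pointwise $n$-dimensional trace ${\rm Tr}\nabla^2\log u$ (which differs from $L\log u$ as soon as $N>n$ or $\phi$ is nonconstant), and the distributional Bochner identity must all coexist inside a common class of test objects. I would handle this by following the approximation scheme developed in Li--Zhang \cite{Li-Zhang2025}: first work inside the algebra of test functions on which $\Delta_t$, $\nabla^2_t$ and ${\bf \Gamma}_{2,t}$ are simultaneously defined; approximate $\log u$ via truncations of the form $\log(u+\varepsilon)$, using the $L^\infty$ bounds on $u$ and $Lu$ assumed in Theorem \ref{Th-B1}; and pass to the limit $\varepsilon\to 0$ in the integrated version of the identity above.
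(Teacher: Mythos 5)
Your proposal is correct and follows essentially the same route as the paper's proof: starting from the second-order entropy dissipation formula \eqref{2H}, splitting the Hessian into its trace-free and trace parts, decomposing ${\bf Ric}_{\infty,n}(L)$ into ${\bf Ric}_{N,n}(L)$ plus the $(N-n)^{-1}$-correction, and applying the same completing-the-square identity (your version with $a=L\log u$, $b=({\rm Tr}\nabla^2-L)\log u$ is algebraically identical to the paper's $\varepsilon=(N-n)/n$ form) followed by the variance decomposition of $\int(L\log u)^2u\,d\mu$ against $H'=-\int L\log u\,u\,d\mu$. Your closing remarks on regularity and approximation go slightly beyond what the paper records, which simply defers such issues to the cited references.
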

\begin{proof}

The proof is as the same as in \cite{LiLi2024TMJ}.See also Li-Zhang \cite{Li-Zhang2025}. By the second order entropy dissipation formula \eqref{2H}, we have

\begin{equation}\label{HDF2nd}
\begin{split}
	H''
	&=-2\int_X \left[\left\|\nabla^2\log u-{{\rm Tr}\nabla^2\log u\over n}{\bf g}\right\|_{\rm HS}^2+
	\left({1\over 2}{\partial {\bf g}\over \partial t}+{\bf Ric_{N, n}}(L)-K{\bf g}\right)(\nabla \log u, \nabla \log u)\right] ud\mu\\
& \ \hskip2cm -2\int_X \left[{|{\rm Tr}\nabla^2\log u|^2\over n}+{|({\rm Tr}\nabla^2-L)\log u|^2\over N-n}\right] 
ud\mu.
	\end{split}
\end{equation}
Using 
$$(a+b)^2={a^2\over 1+\varepsilon}-{b^2\over \varepsilon}+{\varepsilon\over 1+\varepsilon}\left(a+{1+\varepsilon\over \varepsilon}b\right)^2$$
and taking $a=L \log u$, $b={\rm Tr}\nabla^2\log u-L \log u$ and $\varepsilon={N-n\over n}$, we have\begin{equation*}
\begin{split}
{|{\rm Tr}\nabla^2 \log u|^2\over n}&={|L \log u+({\rm Tr}\nabla^2-L)\log  u|^2\over n}\\
&={|L\log u|^2\over N}-{|({\rm Tr}\nabla^2-L)\log  u|^2\over N-n}+{N-n\over Nn}\left[L \log u+{N\over N-n}({\rm Tr}\nabla^2-L)\log  u\right]^2.
\end{split}
\end{equation*}
Substituting it into \eqref{HDF2nd}, we  can derive \eqref{LLHK}. 

%
%
%

\end{proof}

\noindent{\bf Proof of Theorem \ref{log} using \eqref{LLHK} in Theorem \ref{HRiccati}}. 
By the definition of $\mathcal{Y}_a(u, t)$
\begin{equation*}
	\mathcal{Y}_a(u,t):=H (u(t))+\frac{N}{2}\log\Big(\frac{1}{4}{d\over dt}H(u(t)) +a\Big)+(NK-4a)t,
\end{equation*}
we have
\begin{equation*}
\begin{split}
	{d\over dt}\mathcal{Y}_a(u,t)
	&={N\over 2(H'+4a)}\left[H''+{2\over N}H'^2+2KH'+{8a(NK-4a)\over N}\right].
\end{split}
\end{equation*}
%
Using 

\begin{align*}
	\int_X \Big[L \log u-\int_Xu  L\log u d\mu \Big]^2ud\mu+16a^2
	=\int_X \Big[L \log u-\int_Xu  L\log u d\mu +4a\Big]^2ud\mu,
\end{align*}
we have
\begin{align*}
	{d\over dt}\mathcal{Y}_a(u, t)
&= -\frac{1}{4w} \int_X \Big[L \log u+4w \Big]^2ud\mu-{N\over 4w}\int_X 
\left({1\over 2}{\partial {\bf g}\over \partial t}+{\bf Ric_{N, n}}(L)-K{\bf g}\right) (\nabla \log u, \nabla \log u) ud\mu\\
&\hskip1cm -{N\over 4w}\int_X \left\|\nabla^2 \log u-\frac{{\rm Tr}\nabla^2\log u} {n}{\bf g}\right\|^2_{\rm HS}ud\mu\\
&\hskip1.5cm-{N-n\over 4Nnw}
\int_X \Big[L\log u+{N\over N-n}({\rm Tr}\nabla^2-L) \log u\Big]^2ud\mu+{aNK\over w}.
\end{align*}
Thus, on any $(K, n, N)$ super Ricci flow, we have
\begin{align*}
	{d\over dt}\mathcal{Y}_a(u, t)\leq
	-\frac{1}{4w} \int_X \Big[L\log u+4w \Big]^2ud\mu+{aNK\over w}.
	\end{align*}
	In particular, on any $(0, n, N)$ super Ricci flow, we have
	\begin{align*}
	{d\over dt}\mathcal{Y}_a(u, t)\leq
	-\frac{1}{4w} \int_X \Big[L\log u+4w \Big]^2ud\mu\leq 0.
	\end{align*}
This finishes the proof of Theorem \ref{log}. \hfill $\square$

\section{The Li-Yau-Hamilton-Perelman Harnack inequality}

In this section, inspired by Perelman's seminal work \cite{P1}, we prove the Li-Yau-Hamilton-Perelman Harnack inequality on super Ricci flows. 

\subsection{LYHP Harnack inequalities on Ricci flows and Riemannian manifolds}

In \cite{P1}, Perelman introduced the quantity 
\begin{eqnarray}
\nu= [\tau (2\Delta f-|\nabla f|^2+R)+f-n]{e^{-f}\over (4\pi t)^{n/2}}. \label{LYHP1}
 \end{eqnarray}
and proved that the $W$-entropy is naturally related to the quantity $\nu$ as follows 
 \begin{eqnarray}
W(g, f, \tau)&=&\int_M \nu dv, \label{LYHP-W1}
\end{eqnarray}
and the $W$-entropy derivation formula can be  reformulated as follows
\begin{eqnarray}
{d\over dt}W(g, f, \tau)&=&\int_M \square^* \nu dv,  \label{LYHP-W2}
\end{eqnarray}
where
\begin{eqnarray}
\square^*=-{\partial\over \partial t}-\Delta+R.  \label{LYHP-W3}
\end{eqnarray} 
Moreover, Perelman proved the following Li-Yau-Hamilton  Harnack inequality
for the fundamental solution of the conjugate backward heat equation of the Ricci flow.

\begin{theorem} [Perelman \cite{P1}]
Let $g(t)$ be the Ricci flow on $M\times (0, T)$, i.e., $$\partial_t g=-2Ric.$$ Let 
$H={e^{-f}\over (4\pi t)^{n/2}}$ 
 be the fundamental solution to the conjugate  heat equation
\begin{eqnarray*}
\partial_t u = -\Delta u-Ru.
\end{eqnarray*}
Let
\begin{eqnarray}
\nu_H= [\tau(2\Delta f-|\nabla f|^2+R)+f-n]H. \label{LYHP1}
\end{eqnarray}
Then 
\begin{eqnarray}
\nu_H\leq 0. \label{LYHP2}
\end{eqnarray}
Moreover
\begin{eqnarray*}
\square^*\nu_H=-2\tau \left\|Ric+{\rm Hess} f-{g\over 2\tau}\right\|_{\rm HS}^2H.
\end{eqnarray*}

\end{theorem}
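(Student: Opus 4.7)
The plan is to split the theorem into two logically distinct parts: a pointwise algebraic identity for $\square^{\ast}\nu_H$ (the equality in the statement), and a parabolic maximum principle that upgrades this identity to the sign $\nu_H\leq 0$.

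First I would extract, from $\square^{\ast}H=0$ applied to $H=e^{-f}/(4\pi\tau)^{n/2}$ (with $\tau=T-t$, $\partial_t\tau=-1$), the evolution equation
\begin{eqnarray*}
\partial_t f = -\Delta f + |\nabla f|^2 - R + \frac{n}{2\tau},
\end{eqnarray*}
by direct substitution of $\partial_t H = H(-\partial_t f + n/(2\tau))$, $\nabla H = -H\nabla f$ and $\Delta H = H(|\nabla f|^2-\Delta f)$. This is the backward--time counterpart of Perelman's coupled system $(\ref{r-c})$. Writing $\nu_H = vH$ with $v := \tau(2\Delta f-|\nabla f|^2+R)+f-n$, the product rules together with $\square^{\ast} H=0$ and $\nabla H=-H\nabla f$ give the clean identity
\begin{eqnarray*}
\square^{\ast}(vH) = H\big[-\partial_t v - \Delta v + 2\langle \nabla f,\nabla v\rangle\big].
\end{eqnarray*}

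Second, I would expand the bracketed quantity using the standard Ricci--flow evolution formulas $\partial_t g = -2\Ric$, $\partial_t R = \Delta R + 2\|\Ric\|_{\rm HS}^2$, the contracted second Bianchi identity $\nabla^i R_{ij}=\tfrac12\nabla_j R$, the commutator $\partial_t\Delta f = \Delta\partial_t f + 2\langle\Ric,\nabla^2 f\rangle$, the identity $\partial_t|\nabla f|^2 = 2\Ric(\nabla f,\nabla f)+2\langle\nabla f,\nabla\partial_t f\rangle$, and the Bochner formula $\Delta|\nabla f|^2 = 2\|\nabla^2 f\|_{\rm HS}^2+2\langle\nabla f,\nabla\Delta f\rangle+2\Ric(\nabla f,\nabla f)$. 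After inserting the evolution equation for $f$ back into the result, every first--order and zeroth--order contribution will mutually cancel and the surviving quadratic terms reassemble into the perfect square
\begin{eqnarray*}
\square^{\ast}\nu_H = -2\tau\Big\|\Ric+\nabla^2 f - \frac{g}{2\tau}\Big\|_{\rm HS}^2 H.
\end{eqnarray*}
This step is a long but mechanical identity check, fully parallel to the computation underlying Perelman's $W$--entropy formula $(\ref{ent-p})$, and its consistency with the integral relation $\int_M \nu_H\,dv = W(g,f,\tau)$ on a closed manifold (which follows by integration by parts, $\int 2\Delta f\cdot H\,dv = 2\int|\nabla f|^2 H\,dv$) gives a useful global sanity check.

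Third, to upgrade the identity to the inequality $\nu_H\leq 0$, I would observe that the quotient $v=\nu_H/H$ satisfies, in the backward time variable $\tau$, the parabolic differential inequality
\begin{eqnarray*}
\partial_\tau v - \Delta v + 2\langle \nabla f,\nabla v\rangle = -2\tau\Big\|\Ric+\nabla^2 f - \frac{g}{2\tau}\Big\|_{\rm HS}^2 \leq 0.
\end{eqnarray*}
This is a linear parabolic inequality in advection--diffusion form with no zeroth--order term, so a maximum principle applies. The short--time heat--kernel asymptotics $f(x,\tau)\sim d(x,x_0)^2/(4\tau)+O(1)$ yield $\tau|\nabla f|^2\sim f$, $\tau\Delta f\sim n/2$, $\tau R\to 0$, forcing the leading singular parts in $v$ to cancel and giving $\limsup_{\tau\downarrow 0}v\leq 0$. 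The maximum principle then propagates this initial bound to all $\tau\in(0,T)$ and delivers $\nu_H\leq 0$.

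The main obstacle, in my view, is not the pointwise algebraic identity of step two but the rigorous justification of the $\tau\downarrow 0$ initial behaviour together with the applicability of the maximum principle to $v$, which is only smooth for $\tau>0$ and blows up near the singular initial time. This typically demands either sharp heat--kernel asymptotics on a time--dependent Riemannian background (as in Perelman \cite{P1} and the expositions of Kleiner--Lott \cite{Kleiner-Lott}, Chow--Lu--Ni \cite{CLN}, or Morgan--Tian \cite{MT}), or a regularisation in which $H$ is replaced by a smooth solution started from a small positive time and the limit is taken with uniform estimates. With either technical device in place, the theorem reduces to the Bochner--type identity derived in steps one and two.
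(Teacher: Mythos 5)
The paper does not actually prove this theorem: it is quoted verbatim from Perelman \cite{P1} as background for Section 7, so there is no ``paper's own proof'' to match your argument against. What the paper does prove are the analogues for the Witten Laplacian and for super Ricci flows (Lemmas \ref{lemma1}--\ref{lemma2}, \ref{lemma4}--\ref{lemma6} and Theorems \ref{LYHPW1}, \ref{LYHPW3}), and there your step two is exactly the route taken: compute $(\partial_t-L)w$ for $w=2Lf-|\nabla f|^2$ via the Bochner formula, then $(\partial_t-L)w_m$ and $(\partial_t-L)\nu_H$ using $\nabla H=-H\nabla f$, and reassemble the quadratic terms into the perfect square. Your reduction $\square^{\ast}(vH)=H[-\partial_t v-\Delta v+2\langle\nabla f,\nabla v\rangle]$ is the same bookkeeping in a slightly different order, and your reading of the conjugate equation as $\square^{\ast}H=0$ (hence $\partial_t f=-\Delta f+|\nabla f|^2-R+\tfrac{n}{2\tau}$) is the correct one; note in passing that the displayed equation $\partial_t u=-\Delta u-Ru$ in the statement has a sign slip, since $\square^{\ast}=-\partial_t-\Delta+R$ forces $\partial_t u=-\Delta u+Ru$.

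Where you genuinely diverge from the paper is in how the sign $\nu_H\leq 0$ is extracted. You propose a pointwise maximum principle for the quotient $v=\nu_H/H$, which is the Ni-style argument; the paper's own technique for its analogues is the duality argument of Theorem \ref{LYHPW1}: pair $\nu_H$ against an arbitrary positive backward heat solution $h(t)=P_{T-t}h(T)$, use self-adjointness to get $\tfrac{d}{dt}\int h\nu_H\,d\mu=\int(\partial_t-L)\nu_H\,h\,d\mu\leq 0$, and conclude only the monotonicity of $P_{T-t}\nu_H(t)$ --- \emph{not} the pointwise inequality $\nu_H\leq 0$. Both routes stall at the same place you flag: one must show $\limsup_{\tau\downarrow 0}\nu_H\leq 0$ (or that the paired integral tends to $0$), and this requires sharp Gaussian lower bounds and short-time asymptotics for the fundamental solution. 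The paper is explicit that this is the obstruction (Remark \ref{Wuhu2008}): it is available for $L=\Delta$ with $Ric\geq 0$ via Cheeger--Yau, and for the Ricci flow via Perelman's own analysis, but fails in general for weighted Laplacians. So your plan is a faithful reconstruction of the standard proof, with the algebraic identity complete in outline and correct; the one substantive gap is that the $\tau\downarrow 0$ step is named but not carried out, and since that is precisely the hard analytic content of Perelman's Corollary 9.5, your write-up should either import it with a precise citation or restrict the claim to the monotonicity statement that the algebra alone delivers.
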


In the sequel of this paper, we call $(\ref{LYHP1})$ the Li-Yau-Hamilton-Perelman  Harnack quantity, and we call $(\ref{LYHP2})$ the Li-Yau-Hamilton-Perelman Harnack inequality for the Ricci flow. 

In \cite{N1, N2}, Ni proved the  Li-Yau-Hamilton-Perelman type Harnack inequality
for the fundamental solution of the heat equation on closed Riemannian manifolds with non-negative Ricci curvature.

\begin{theorem} [Ni \cite{N1, N2}]
Let $(M, g)$ be a closed  Riemannian manifold with $Ric\geq 0$, 
$$H={e^{-f}\over (4\pi t)^{n/2}}$$ 
  the fundamental solution to the heat equation
\begin{eqnarray*}
\partial_t u = \Delta u.
\end{eqnarray*}
Let
\begin{eqnarray*}
\nu_H= [t(2\Delta f-|\nabla f|^2)+f-n]H.
\end{eqnarray*}
Then
\begin{eqnarray*}
\nu_H\leq 0.
\end{eqnarray*}
Moreover,
\begin{eqnarray*}
\left({\partial\over \partial t}-\Delta\right)\nu_H=-2t\left[\left\|\nabla^2 f-{g\over 2t}\right\|_{\rm HS}^2+
Ric(\nabla f, \nabla f)\right] H. 
\end{eqnarray*}

\end{theorem}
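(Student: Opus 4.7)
The plan is to establish the pointwise evolution identity for $\nu_H$ first, and then deduce the sign $\nu_H \leq 0$ via the parabolic maximum principle combined with an initial-time analysis of the heat kernel asymptotics.

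For the identity, starting from $H = e^{-f}/(4\pi t)^{n/2}$ and the heat equation $\partial_t H = \Delta H$, substitution yields the backward evolution
\[
\partial_t f = \Delta f - |\nabla f|^2 - \tfrac{n}{2t}.
\]
Writing $\nu_H = (tP + f - n)H$ with $P = 2\Delta f - |\nabla f|^2$, I would compute $(\partial_t - \Delta)\nu_H$ by expanding term-by-term, differentiating $P$ via the evolution of $f$, and applying the Bochner formula
\[
\tfrac{1}{2}\Delta |\nabla f|^2 = \|\nabla^2 f\|_{\rm HS}^2 + \langle \nabla f, \nabla \Delta f\rangle + \Ric(\nabla f, \nabla f).
\]
After collecting terms, the cross contributions cancel and the remaining expression condenses to $-2t[\|\nabla^2 f - g/(2t)\|_{\rm HS}^2 + \Ric(\nabla f, \nabla f)]H$. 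A useful cross-check: integrating against $dv$ on the closed manifold recovers Ni's $W$-entropy derivation formula, since $W(f,t) = \int_M \nu_H\, dv$ and $\int_M \Delta \nu_H\, dv = 0$.

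Once the identity is available, the assumption $\Ric \geq 0$ immediately gives $(\partial_t - \Delta)\nu_H \leq 0$. To upgrade this differential inequality to the pointwise bound $\nu_H \leq 0$, I would introduce $v := \nu_H/H$. Using $(\partial_t - \Delta)H = 0$ and $\nabla H = -H\nabla f$, a direct manipulation shows that $v$ satisfies the weighted linear parabolic equation
\[
\Big(\frac{\partial}{\partial t} - \Delta + 2\nabla f \cdot \nabla\Big)v = -2t\Big[\big\|\nabla^2 f - \tfrac{g}{2t}\big\|_{\rm HS}^2 + \Ric(\nabla f, \nabla f)\Big] \leq 0.
\]
The parabolic maximum principle on the closed manifold then implies that $\sup_M v(\cdot, t)$ is non-increasing in $t$, so it suffices to verify that $\limsup_{t \to 0^+} \sup_M v(\cdot, t) \leq 0$.

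The main obstacle is precisely this initial-time verification. The fundamental solution concentrates at a point $y$ as $t \to 0^+$, and by standard heat kernel asymptotics $f(x,t) = d(x,y)^2/(4t) + O(1)$, with matching asymptotic expansions for $\nabla f$, $\nabla^2 f$ and $\Delta f$ in any geodesic ball around $y$. Inserting these expansions into $v = t(2\Delta f - |\nabla f|^2) + f - n$, the flat Euclidean model already yields $v \equiv 0$ identically, and the curvature-induced corrections in the parametrix for $H$ on $M$ contribute only non-positive lower order terms when $\Ric \geq 0$, giving $\limsup_{t\to 0^+} v \leq 0$. Making this matching uniform in $x$ on a general closed manifold is the delicate technical step, typically handled via a parametrix construction for $H$ or by comparison with a rescaled Euclidean heat kernel in a normal coordinate chart centered at $y$; once this is in hand, the maximum principle closes the argument and yields $\nu_H \leq 0$ on $M \times (0, T)$.
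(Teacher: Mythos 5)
Your derivation of the evolution identity is correct and is essentially the computation the paper itself performs in the weighted setting: Lemma \ref{lemma1} and Lemma \ref{lemma2} carry out exactly this calculation for the Witten Laplacian (take $\phi=0$, $m=n$, $L=\Delta$ to recover Ni's case), first for $w=2\Delta f-|\nabla f|^2$ using the Bochner formula and the evolution $\partial_t f=\Delta f-|\nabla f|^2-\tfrac{n}{2t}$, then for $w_n=tw+f-n$, and finally for $\nu_H=w_nH$ via $\nabla H=-H\nabla f$. Your cross-check $W=\int_M\nu_H\,dv$ and $\tfrac{d}{dt}W=\int_M(\partial_t-\Delta)\nu_H\,dv$ is likewise how the paper packages the formula (Theorem \ref{WH}). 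Note that the paper states the present theorem as a quoted result of Ni and does not reprove the pointwise inequality; in the general weighted case it deliberately retreats to the integrated monotonicity $\tfrac{d}{dt}\bigl(P_{T-t}\nu_H\bigr)\le 0$ (Theorem \ref{LYHPW1}), proved by pairing with backward solutions, rather than claiming $\nu_H\le0$.

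The genuine gap in your argument is the initial-time verification $\limsup_{t\to0^+}\sup_M v\le 0$, which is precisely the step the paper singles out as delicate (Remark \ref{Wuhu2008}). Your assertion that the Euclidean model gives $v\equiv0$ and that ``the curvature-induced corrections in the parametrix contribute only non-positive lower order terms when $\Ric\ge0$'' is not a proof: the $O(1)$ correction to $f$ is $\tfrac12\log\theta$ with $\theta$ the volume density in normal coordinates, and its gradient and Laplacian enter $2\Delta f-|\nabla f|^2$ at order $t^{-1}$ and $O(1)$ with no a priori sign; one must exhibit the cancellation of the radial-derivative terms and then invoke Bishop's inequality $\theta\le1$ (equivalently the Laplacian comparison $\Delta d^2\le 2n$, or the Cheeger--Yau lower bound $H\ge(4\pi t)^{-n/2}e^{-d^2(x,y)/(4t)}$, i.e.\ $f\le d^2/(4t)$) to conclude that the surviving term is nonpositive. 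Moreover the parametrix expansion is only valid inside the injectivity radius and non-uniformly near the cut locus, so the limit must be controlled globally --- this is exactly why Ni's argument, and the paper's discussion, route the sign through the Cheeger--Yau and Li--Yau comparison estimates rather than through a local expansion, and why the paper cannot extend the pointwise bound to $L\ne\Delta$ where Cheeger--Yau fails. As written, the sign of the limit is asserted rather than established, so the proof of $\nu_H\le0$ is incomplete; the identity and the reduction to the maximum principle are fine.
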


In \cite{LiXu}, J. Li and X. Xu extended Ni's result to closed Riemannian manifolds with Ricci curvature bounded from below. More precisely, they proved the following

\begin{theorem} [Li-Xu \cite{LiXu}]
Let $(M, g)$ be a closed Riemannian manifold with $Ric\geq -K$,  where $K\geq 0$ is a constant. Let 
$$H={e^{-f}\over (4\pi t)^{n/2}}$$ be 
  the fundamental solution to the heat equation
\begin{eqnarray*}
\partial_t u = \Delta u.
\end{eqnarray*}
Define
\begin{eqnarray}
\nu_H=\left[t\Delta f+t(1+Kt)(\Delta f -|\nabla f|^2)+f-n\left(1+{1\over 2}Kt\right)^2\right]H.
\end{eqnarray}
Then
\begin{eqnarray*}
\nu_H\leq 0.
\end{eqnarray*}
Moreover,
\begin{eqnarray*}
\left({\partial\over \partial t}-\Delta\right)\nu_H=-2t\left[\left\|\nabla^2 f-{1\over 2}\left({1\over t}+K\right)g
\right\|_{\rm HS}^2+(Ric+Kg)(\nabla f, \nabla f)\right] H. 
\end{eqnarray*}

\end{theorem}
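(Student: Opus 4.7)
The plan is to follow the Li-Yau-Hamilton-Perelman template: first derive the precise evolution equation for $\nu_H$ under the heat equation on the fixed manifold $(M,g)$, and then apply a parabolic maximum principle to deduce the sign $\nu_H\leq 0$. The coefficient $(1+Kt)$ multiplying Ni's original Harnack expression $\Delta f-|\nabla f|^2$, and the correction $n(1+\tfrac12 Kt)^2$ in place of Ni's $n$, are engineered exactly so that the terms produced by the lower Ricci bound $-K$ reassemble into a perfect square involving the shifted Hessian $\nabla^2 f-\tfrac12(\tfrac1t+K)g$ plus the nonnegative tensor $(\Ric+Kg)(\nabla f,\nabla f)$.

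First I would derive the evolution equation. Write $\nu_H=vH$ with
\[
v:=t\Delta f+t(1+Kt)(\Delta f-|\nabla f|^2)+f-n\bigl(1+\tfrac12 Kt\bigr)^2.
\]
Since $\nabla H=-H\nabla f$ and $(\partial_t-\Delta)H=0$, the product rule yields
\[
(\partial_t-\Delta)(vH)=\bigl[(\partial_t-\Delta)v+2\langle\nabla f,\nabla v\rangle\bigr]H.
\]
I would then compute $(\partial_t-\Delta)v+2\langle\nabla f,\nabla v\rangle$ term by term, using the ansatz identity $\partial_t f=\Delta f-|\nabla f|^2+\tfrac{n}{2t}$ (equivalent to $\partial_t H=\Delta H$), its differentiated versions for $(\partial_t-\Delta)\Delta f$ and $(\partial_t-\Delta)|\nabla f|^2$, and the Bochner formula
\[
\Delta|\nabla f|^2=2\|\nabla^2 f\|_{\rm HS}^2+2\langle\nabla\Delta f,\nabla f\rangle+2\Ric(\nabla f,\nabla f).
\]
Completing the square in
\[
\Bigl\|\nabla^2 f-\tfrac12\bigl(\tfrac1t+K\bigr)g\Bigr\|_{\rm HS}^2=\|\nabla^2 f\|_{\rm HS}^2-\bigl(\tfrac1t+K\bigr)\Delta f+\tfrac{n}{4}\bigl(\tfrac1t+K\bigr)^2,
\]
and collecting the coefficients of $\Delta f$, $|\nabla f|^2$ and the purely time-dependent pieces should leave exactly the claimed identity. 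The chosen weight $(1+Kt)$ on $(\Delta f-|\nabla f|^2)$ absorbs the extra $K|\nabla f|^2$, while the modification of the additive constant to $n(1+\tfrac12 Kt)^2$ is precisely what is needed so that the $\partial_t$ of the constant, together with the $n/(2t)$ from $\partial_t f$, matches the remainder from the completed square.

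Next I would prove $\nu_H\leq 0$ by a parabolic maximum principle. Under the hypothesis $\Ric\geq -Kg$ the right-hand side of the evolution equation is nonpositive, so $\nu_H$ is a subsolution of the heat operator $\partial_t-\Delta$ on $M\times(0,T]$. The only delicate point is the limit as $t\to 0^+$: the heat kernel $H$ concentrates at a base point $x_0$ and the asymptotics $f(x,t)\sim d(x,x_0)^2/(4t)$ make individual terms of $v$ singular. Following the device used by Ni and adapted by Li-Xu, one verifies that $\limsup_{t\to 0^+}\nu_H(\cdot,t)\leq 0$ in the sense needed to run the maximum principle, for instance by comparing $\nu_H$ with an auxiliary function of the form $\varepsilon(T-t)^{-1}H$ and letting $\varepsilon\to 0$, or by testing against an arbitrary positive supersolution and using Varadhan-type short-time expansions of $\log H$.

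The main obstacle will be this initial-time analysis: one must control each term of $v$ near $t=0$ using a sufficiently accurate expansion of $f$, including information on $\Delta f$ and $|\nabla f|^2$, and must ensure that possible singularities at the cut locus of $x_0$ do not spoil the argument. On a closed manifold this is standard but technical; once it is secured, compactness of $M$ together with the subsolution property $(\partial_t-\Delta)\nu_H\leq 0$ closes the argument by the parabolic maximum principle and yields $\nu_H\leq 0$ on $M\times(0,T]$.
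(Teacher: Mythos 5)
The paper does not actually prove this theorem: it is quoted verbatim from Li--Xu \cite{LiXu} as background in Section 7.1, with no proof supplied. What the paper does prove are the weighted analogues (Theorem \ref{LYHPW1}, Theorem \ref{LYHPW2}, via Lemmas \ref{lemma1} and \ref{lemma2}), and there the template is exactly yours for the identity part: write $\nu_H=vH$, use $\nabla H=-H\nabla f$ and $(\partial_t-\Delta)H=0$ to get $(\partial_t-\Delta)(vH)=[(\partial_t-\Delta)v+2\langle\nabla f,\nabla v\rangle]H$, compute $(\partial_t-\Delta)v$ with the Bochner formula, and complete the square so that the $-2\langle\nabla v,\nabla f\rangle$ term produced by the computation cancels the cross term. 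So your first half reproduces the paper's method for the companion results and is sound. Two caveats. First, a sign slip: for $H=e^{-f}/(4\pi t)^{n/2}$ solving $\partial_t H=\Delta H$ one has $\partial_t f=\Delta f-|\nabla f|^2-\tfrac{n}{2t}$, not $+\tfrac{n}{2t}$ (check against the Euclidean Gaussian $f=|x|^2/4t$); with your sign the bookkeeping of the purely time-dependent terms will not close. Second, and more substantively, you are right that the whole weight of the pointwise inequality $\nu_H\le 0$ sits in the limit $t\to 0^+$, and your proposed route (test $\nu_H$ against an arbitrary positive solution of the backward heat equation, use the monotonicity coming from $(\partial_t-\Delta)\nu_H\le 0$, and control $\limsup_{t\to0^+}$ via Varadhan asymptotics and the Cheeger--Yau lower heat-kernel bound) is precisely the Ni/Li--Xu argument. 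Note that the auxiliary barrier $\varepsilon(T-t)^{-1}H$ you mention addresses the terminal time rather than the singular initial time, so it does not replace the short-time analysis. The paper itself is explicit (Remark \ref{Wuhu2008}) that this initial-time step is the genuine obstruction: it is available for $L=\Delta$ with $\mathrm{Ric}\ge -K$ on a closed manifold, which is why Li--Xu obtain the pointwise bound, but it can fail for weighted Laplacians, which is why the paper's own Theorems \ref{LYHPW1}--\ref{LYHPW2} only assert the propagated monotonicity ${d\over dt}\bigl(P_{T-t}\nu_H(t)\bigr)\le 0$ instead of $\nu_H\le 0$. With the sign corrected and the initial-time analysis carried out as in Ni, your plan yields a complete proof of the stated theorem.
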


It is natural to ask the question whether we can establish the Li-Yau-Hamilton-Perelman differential Harnack inequality for the Witten Laplacian on compact Riemannian manifolds equipped with 
weighted volume measure and on
closed manifolds with super Ricci flows. 
The purpose of this section is to study  this question.

\subsection{LYHP Harnack inequality on weighted complete Riemannian manifolds}

The following result  is a modified version of the Li-Yau-Hamilton-Perelman differential Harnack 
inequality for the heat equation of the Witten Laplacian on complete Riemannian manifolds with the  $(0, m)$-condition. It was proved in our 2007 unpublished manuscript. 

\begin{theorem} \label{LYHPW1}
Let $(M, g, \phi)$ be a complete Riemannian manifold with $Ric_{m, n}(L)\geq 0$,  $P_t=e^{tL}$ be the heat semigroup generated by $L$, 
and  $H={e^{-f}\over (4\pi t)^{m/2}}$  the fundamental solution to the heat equation $\partial_t u = L u$.
Let
\begin{eqnarray*}
\nu_H(t)= [t(2L f-|\nabla f|^2)+f-m]H.
\end{eqnarray*}
Then
\begin{eqnarray*}
{d\over dt}(P_{T-t}\nu_H(t))\leq 0.
\end{eqnarray*}
Moreover,
\begin{eqnarray*}
W_m(u, t)=\int_M \nu_H d\mu,
\end{eqnarray*}
and
\begin{eqnarray*}\\
{d\over dt}W_m(u, t)=\int_M 
\left({\partial\over \partial t}-L\right)\nu_H d\mu.
\end{eqnarray*}
\end{theorem}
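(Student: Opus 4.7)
The plan is to reduce the theorem to the pointwise inequality $(\partial_t - L)\nu_H \leq 0$, after which the remaining claims follow from the order-preserving property of the Witten heat semigroup $P_s = e^{sL}$. The whole argument hinges on three basic identities that I will exploit throughout: (a) $\nabla H = -H\nabla f$; (b) $(\partial_t - L)H = 0$; and (c) the induced evolution $\partial_t f = Lf - |\nabla f|^2 - \frac{m}{2t}$, obtained by substituting $H = (4\pi t)^{-m/2} e^{-f}$ into (b).

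For the two integral identities, substitute $u = H$: integration by parts against $d\mu = e^{-\phi}dv$ gives $\int 2Lf \cdot u \, d\mu = 2\int|\nabla f|^2 u\, d\mu$, which collapses $\int\nu_H\, d\mu$ to the standard $W_m$-integrand $\int[t|\nabla f|^2 + f - m]u\, d\mu = W_m(u,t)$; moreover $\int L\nu_H\, d\mu = 0$ by one more integration by parts (justified by the decay of $\nu_H$ and its gradient on a complete manifold with $\Ric_{m,n}(L)\ge 0$), so that $\frac{d}{dt}W_m(u,t) = \int \partial_t \nu_H\, d\mu = \int(\partial_t - L)\nu_H\, d\mu$.

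The heart of the proof is the pointwise inequality. Writing $\nu_H = vH$ with $v = t(2Lf - |\nabla f|^2) + f - m$, the Leibniz rule for $L$ combined with (a) and (b) yields
$$(\partial_t - L)\nu_H = H\bigl[(\partial_t - L)v + 2\<\nabla v, \nabla f\>\bigr].$$
Using (c) and the Bochner-Weitzenb\"ock formula for the Witten Laplacian,
$\tfrac{1}{2}L|\nabla f|^2 - \<\nabla f, \nabla Lf\> = \|\nabla^2 f\|_{\rm HS}^2 + \Ric_{\infty,n}(L)(\nabla f, \nabla f)$,
a direct completion-of-squares computation (the pointwise counterpart of the $W$-entropy formula \eqref{WLi}) yields
$$(\partial_t - L)v + 2\<\nabla v, \nabla f\> = -2t\Bigl[\Bigl\|\nabla^2 f - \tfrac{g}{2t}\Bigr\|_{\rm HS}^2 + \Ric_{m,n}(L)(\nabla f, \nabla f) + \tfrac{1}{m-n}\bigl(\<\nabla\phi, \nabla f\> + \tfrac{m-n}{2t}\bigr)^2\Bigr],$$
which is $\leq 0$ under $\Ric_{m,n}(L) \geq 0$, establishing $(\partial_t - L)\nu_H \leq 0$ pointwise.

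Finally, since $\partial_t P_{T-t} = -LP_{T-t}$,
$$\frac{d}{dt}\bigl(P_{T-t}\nu_H(t)\bigr) = P_{T-t}\bigl((\partial_t - L)\nu_H(t)\bigr) \leq 0,$$
using that $P_s$ is order-preserving. The main obstacle is the pointwise identity just above: it requires the trace inequality $\|\nabla^2 f\|_{\rm HS}^2 + \frac{\<\nabla\phi, \nabla f\>^2}{m-n} \geq \frac{(Lf)^2}{m}$ (the $m$-dimensional enhancement of the usual $\|\nabla^2 f\|_{\rm HS}^2 \geq (\Delta f)^2/n$), combined with careful bookkeeping of the $\frac{m}{4t^2}$ constants produced by the two completed squares; this is where the Bakry-\'Emery dimension $m$ and the correction term $-\frac{\nabla\phi\otimes\nabla\phi}{m-n}$ in $\Ric_{m,n}(L)$ enter in a nontrivial way. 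A secondary technical issue, on a complete non-compact $M$, is the justification of integration by parts and of commuting $\frac{d}{dt}$ with $\int$; these rely on Li-Yau-type Gaussian decay estimates for the heat kernel $H$ under $\Ric_{m,n}(L) \geq 0$.
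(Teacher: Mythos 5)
Your proposal is correct and follows essentially the same route as the paper: the same decomposition $\nu_H=v H$ (the paper's $w_m H$), with the Leibniz rule and $\nabla H=-H\nabla f$ reducing everything to the pointwise identity
$(\partial_t-L)\nu_H=-2t\bigl[\bigl\|\nabla^2 f-\tfrac{g}{2t}\bigr\|_{\rm HS}^2+Ric_{m,n}(L)(\nabla f,\nabla f)+\tfrac{1}{m-n}\bigl(\langle\nabla\phi,\nabla f\rangle+\tfrac{m-n}{2t}\bigr)^2\bigr]H$
(the paper's Lemmas on $w$ and $w_m$), followed by propagation through $P_{T-t}$ --- the paper phrases this last step dually, pairing $\nu_H$ against an arbitrary positive solution of the backward equation and using self-adjointness of $L$, which is the same argument as your direct computation of $\tfrac{d}{dt}P_{T-t}\nu_H$. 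One small correction: that pointwise formula is an \emph{exact} algebraic identity obtained by expanding the two completed squares against the Bochner formula, so the trace inequality $\|\nabla^2 f\|_{\rm HS}^2+\langle\nabla\phi,\nabla f\rangle^2/(m-n)\ge (Lf)^2/m$ that you flag as the ``main obstacle'' is not needed for this theorem; it would only enter if one wanted the further bound by $-\tfrac{2t}{m}\bigl(Lf-\tfrac{m}{2t}\bigr)^2$.
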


To prove the LYHP Harnack inequality, we need the following 

\begin{lemma}\label{lemma1}  Let $u$ be a positive solution to the heat equation $\partial_t u=Lu$, $f=-\log u$ and $w=2Lf-|\nabla f|^2$. Then
\begin{eqnarray}
\left({\partial\over \partial t}-L\right)w~~&=&-2|\nabla^2 f|^2-2Ric(L)(\nabla f, \nabla f)-2\langle \nabla w, \nabla f\rangle\nonumber\\
&=&-2\Gamma_2(f, f)-2\langle \nabla w, \nabla f\rangle,\label{formula1}.
\end{eqnarray}
When $\phi=0$, $m=n$ and $L=\Delta$, this is due to Ni \cite{N1, N2}.
\end{lemma}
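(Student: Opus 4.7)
The plan is to treat Lemma \ref{lemma1} as a direct computation using only three ingredients: the heat equation $\partial_t u = Lu$, the chain rule for the diffusion operator $L$, and the Bochner formula for the Witten Laplacian. Since $u>0$ and $f=-\log u$, applying the diffusion chain rule $L(e^{-f})=e^{-f}(-Lf+|\nabla f|^2)$ and substituting into $\partial_t u=Lu$ yields the evolution of the potential
\begin{equation*}
\partial_t f = Lf - |\nabla f|^2,
\end{equation*}
which is the starting point for everything else. I will use only this identity and commutation of $\partial_t$ with $L$ (legitimate since the metric and potential are stationary in this lemma).

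Next I will compute $\partial_t w$ and $Lw$ separately and subtract. Differentiating $w=2Lf-|\nabla f|^2$ in $t$, using $\partial_t(Lf)=L(\partial_t f)$ and $\partial_t|\nabla f|^2=2\langle\nabla f,\nabla\partial_t f\rangle$ together with the equation for $\partial_t f$ above, gives
\begin{equation*}
\partial_t w = 2L^2 f - 2L|\nabla f|^2 - 2\langle\nabla f,\nabla Lf\rangle + 2\langle\nabla f,\nabla|\nabla f|^2\rangle,
\end{equation*}
while $Lw = 2L^2f - L|\nabla f|^2$. Subtracting, the $2L^2f$ terms cancel and I obtain
\begin{equation*}
(\partial_t - L)w = -L|\nabla f|^2 - 2\langle\nabla f,\nabla Lf\rangle + 2\langle\nabla f,\nabla|\nabla f|^2\rangle.
\end{equation*}

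The final step is to invoke the Bochner formula for the Witten Laplacian in its $\infty$-dimensional Bakry--Emery form,
\begin{equation*}
\tfrac12 L|\nabla f|^2 = |\nabla^2 f|^2 + \langle\nabla f,\nabla Lf\rangle + \mathrm{Ric}(L)(\nabla f,\nabla f),
\end{equation*}
which by definition is exactly $\Gamma_2(f,f)=\tfrac12 L|\nabla f|^2-\langle\nabla f,\nabla Lf\rangle = |\nabla^2 f|^2 + \mathrm{Ric}(L)(\nabla f,\nabla f)$. Substituting transforms $-L|\nabla f|^2 - 2\langle\nabla f,\nabla Lf\rangle$ into $-2|\nabla^2 f|^2 - 2\mathrm{Ric}(L)(\nabla f,\nabla f) - 4\langle\nabla f,\nabla Lf\rangle$. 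The residual cross-terms $-4\langle\nabla f,\nabla Lf\rangle + 2\langle\nabla f,\nabla|\nabla f|^2\rangle$ collapse to $-2\langle\nabla f,\nabla w\rangle$ precisely because $\nabla w = 2\nabla Lf - \nabla|\nabla f|^2$. This yields the first displayed identity, and the equivalent $\Gamma_2$-form follows at once from the definition of the $\Gamma_2$-operator.

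No step requires integration by parts, so no boundary or growth condition enters; the only subtlety is bookkeeping which Ricci tensor the Bochner formula produces. I must make sure to use the $(\infty,n)$-Bakry--Emery tensor $\mathrm{Ric}(L)=\mathrm{Ric}+\nabla^2\phi$ at this stage, so that no $(m-n)$-correction appears yet. The downstream applications (such as Theorem \ref{LYHPW1}, where $\mathrm{Ric}_{m,n}(L)\geq0$ is assumed) will incorporate the additional $-|\nabla\phi\cdot\nabla f|^2/(m-n)$ term separately via the standard completing-the-square trick; it is not part of Lemma \ref{lemma1} itself.
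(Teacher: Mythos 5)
Your computation is correct and is essentially the paper's own proof: both rest on $\partial_t f=Lf-|\nabla f|^2$, the commutation of $\partial_t$ with $L$ in the stationary setting, the Bochner/$\Gamma_2$ identity for the Witten Laplacian, and the collapse of the cross-terms via $\nabla w=2\nabla Lf-\nabla|\nabla f|^2$. The only cosmetic difference is that the paper writes $w=2f_t+|\nabla f|^2$ and uses $(\partial_t-L)f_t=\partial_t(\partial_t-L)f$ to avoid ever writing $L^2f$, whereas you let the two $2L^2f$ terms cancel explicitly; the intermediate expression $-L|\nabla f|^2-2\langle\nabla f,\nabla Lf\rangle+2\langle\nabla f,\nabla|\nabla f|^2\rangle$ is identical in both arguments, and your closing remark about using ${\rm Ric}(L)={\rm Ric}+\nabla^2\phi$ rather than the $(m,n)$-tensor at this stage is exactly right.
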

{\it Proof}.  Note that $f_t=Lf-|\nabla f|^2$ and $w=2f_t+|\nabla f|^2$. 
Using  the generalized Bochner formula, a direct calculation yields
\begin{eqnarray*}
(\partial_t-L)w&=&(\partial_t-L)(2f_t+|\nabla f|^2)\\
&=&2(\partial_t-L)f_t+\partial_t |\nabla f|^2-L|\nabla f|^2\\ 
&=&2\partial_t (\partial_t-L)f+\partial_t|\nabla f|^2-L|\nabla f|^2\\
&=&-2\partial_t |\nabla f|^2+\partial_t |\nabla f|^2 -L|\nabla f|^2\\
&=&-\partial_t |\nabla f|^2-L|\nabla f|^2\\
&=&-2\langle \nabla f, \nabla f_t\rangle -2\langle \nabla f, \nabla Lf\rangle  -2|\nabla^2 f|^2-2Ric(L)(\nabla f, \nabla f)\\
&=&-2\langle \nabla f, \nabla (f_t+Lf)\rangle -2|\nabla^2 f|^2-2Ric(L)(\nabla f, \nabla f)\\
&=&-2\langle \nabla f, \nabla w\rangle -2\Gamma_2(f, f).
\end{eqnarray*}
This proves $(\ref{formula1})$.  \hfill $\square$

\begin{lemma}\label{lemma2}  Let $u={e^{-f}\over (4\pi t)^{m/2}}$, $w=2Lf-|\nabla f|^2$, $w_m=tw+f-m$, and $\nu_H=w_mH$. Then
\begin{eqnarray}
\left({\partial\over \partial t}-L\right)w_m&=&-2t \left[\left\|\nabla^2 f-{g\over 2t}\right\|_{\rm HS}^2+Ric_{m, n}(L)(\nabla f, \nabla f)\right] \nonumber\\
& & -{2t\over m-n}\left(\nabla \phi\cdot\nabla f+{m-n\over 2t}\right)^2-2\langle \nabla w_m, \nabla f\rangle,\label{formula2}\\
\left({\partial\over \partial t}-L\right)\nu_H&=&-2t\left[\left\|\nabla^2 f-{g\over 2}\right\|_{\rm HS}^2+Ric_{m, n}(L)(\nabla f, \nabla f)\right] H\nonumber \\
& &\ \ \ -{2t\over m-n}\left(\nabla \phi\cdot\nabla f+{m-n\over 2t}\right)^2 H.\label{formula3}
\end{eqnarray}
When $\phi=0$, $m=n$ and $L=\Delta$, this is due to Ni \cite{N1, N2}.
\end{lemma}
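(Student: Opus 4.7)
The plan is to reduce the two identities to Lemma \ref{lemma1} plus a purely algebraic completion of squares, with the product $\nu_H = w_mH$ handled through the Leibniz rule for the operator $\partial_t - L$.

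First, I will compute $(\partial_t - L)f$ directly. From $u = e^{-f}/(4\pi t)^{m/2}$ and $\partial_t u = Lu$, together with $Lu = -u\, Lf + u|\nabla f|^2$, one obtains
\begin{equation*}
(\partial_t - L)f = -|\nabla f|^2 - \tfrac{m}{2t}.
\end{equation*}
Combining this with Lemma \ref{lemma1}, which gives $(\partial_t - L)w = -2\Gamma_2(f,f) - 2\langle\nabla w, \nabla f\rangle$, and with the elementary product rule $(\partial_t - L)(tw) = w + t(\partial_t - L)w$, the definition $w_m = tw + f - m$ yields
\begin{equation*}
(\partial_t - L)w_m = w - |\nabla f|^2 - \tfrac{m}{2t} - 2t\,\Gamma_2(f,f) - 2t\langle\nabla w, \nabla f\rangle.
\end{equation*}
Since $\nabla w_m = t\nabla w + \nabla f$ and $w - |\nabla f|^2 = 2Lf - 2|\nabla f|^2$, I can rewrite this compactly as
\begin{equation*}
(\partial_t - L)w_m + 2\langle\nabla w_m, \nabla f\rangle = 2Lf - \tfrac{m}{2t} - 2t\,\Gamma_2(f,f).
\end{equation*}

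Next, I invoke the generalized Bochner formula for the Witten Laplacian, $\Gamma_2(f,f) = |\nabla^2 f|^2 + \mathrm{Ric}(L)(\nabla f,\nabla f)$ with $\mathrm{Ric}(L) = \mathrm{Ric} + \nabla^2\phi$, and split the right-hand side into a traceless Hessian part plus a drift part. The key algebraic identities are
\begin{equation*}
|\nabla^2 f|^2 = \Big\|\nabla^2 f - \tfrac{g}{2t}\Big\|_{\mathrm{HS}}^2 + \tfrac{\Delta f}{t} - \tfrac{n}{4t^2},
\end{equation*}
\begin{equation*}
\tfrac{1}{m-n}\Big(\nabla\phi\cdot\nabla f + \tfrac{m-n}{2t}\Big)^2 = \tfrac{(\nabla\phi\cdot\nabla f)^2}{m-n} + \tfrac{\nabla\phi\cdot\nabla f}{t} + \tfrac{m-n}{4t^2},
\end{equation*}
together with $\mathrm{Ric}(L)(\nabla f,\nabla f) = \mathrm{Ric}_{m,n}(L)(\nabla f,\nabla f) + \tfrac{(\nabla\phi\cdot\nabla f)^2}{m-n}$. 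Adding the two completed squares scaled by $-2t$ produces a contribution $\tfrac{\Delta f - \nabla\phi\cdot\nabla f}{1}\cdot 2 - \tfrac{n}{2t} - \tfrac{m-n}{2t} = 2Lf - \tfrac{m}{2t}$, which is precisely the non-quadratic part identified above. This bookkeeping delivers \eqref{formula2}.

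For \eqref{formula3}, I use the Leibniz-type rule $(\partial_t - L)(w_m H) = H\,(\partial_t - L)w_m + w_m\,(\partial_t - L)H - 2\langle\nabla w_m, \nabla H\rangle$, valid because $L$ is a diffusion operator. Since $H$ satisfies $\partial_t H = LH$, the middle term vanishes; and since $\nabla H = -H\,\nabla f$, the cross term equals $2H\langle\nabla w_m, \nabla f\rangle$, which cancels the transport term $-2\langle\nabla w_m, \nabla f\rangle$ in \eqref{formula2}. What survives is exactly the sum of squares multiplied by $H$, giving \eqref{formula3}.

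The main obstacle is the algebraic completion of squares in the second step: one has to simultaneously absorb the trace $\Delta f$ into the shifted Hessian norm and the drift term $\nabla\phi\cdot\nabla f$ into the $(m-n)$-penalty term, while correctly converting $\mathrm{Ric}(L)$ into $\mathrm{Ric}_{m,n}(L)$. Everything else — the use of Lemma \ref{lemma1}, the diffusion property of $L$, and the identity $\nabla H = -H\nabla f$ — is routine.
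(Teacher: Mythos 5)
Your proposal is correct and follows essentially the same route as the paper's proof: apply Lemma \ref{lemma1}, use $(\partial_t-L)f=-|\nabla f|^2-\tfrac{m}{2t}$ (the paper phrases this via $\bar f=-\log u$), convert $\nabla w$ into $\nabla w_m$, complete the squares with the Bochner formula and the definition of $Ric_{m,n}(L)$, and pass to $\nu_H=w_mH$ via the product rule with $\nabla H=-H\nabla f$ so that the transport term cancels. The only discrepancy is with the displayed statement itself, where $\|\nabla^2 f-\tfrac{g}{2}\|_{\rm HS}$ in \eqref{formula3} is a typo for $\|\nabla^2 f-\tfrac{g}{2t}\|_{\rm HS}$, which is what your computation (and the paper's) actually yields.
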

{\it Proof}.  Let $\bar{f}=-\log u$. Then $f=\bar{f}-{m\over 2}\log (4\pi t)$, $\nabla f=\nabla \bar{f}$, $Lf=L\bar{f}$ and $\Gamma_2(f, f)=\Gamma_2(\bar{f}, \bar{f})$. Hence
\begin{eqnarray*}
w_m=tw+\bar{f}-{m\over 2}\log(4\pi t)-m.
\end{eqnarray*}  By the fact  $(\partial_t-L)\bar{f}=-|\nabla \bar{f}|^2$ and  using Lemma \ref{lemma1}, we have
\begin{eqnarray*}
(\partial_t-L)w_m&=&w+t(\partial_t-L)w+(\partial_t-L)(\bar{f}-{m\over 2}\log(4\pi t))\\
&=& 2\bar{f}_t+|\nabla \bar{f}|^2-2t \langle \nabla \bar{f}, \nabla w\rangle -2t \Gamma_2(\bar{f}, \bar{f})-|\nabla \bar{f}|^2-{m\over 2t}\\
&=& 2\bar{f}_t-2t \langle \nabla f, \nabla w\rangle -2t \Gamma_2(f, f)-{m\over 2t}.
\end{eqnarray*}
Now 
\begin{eqnarray*}
\langle \nabla w_m, \nabla f\rangle =t 
\langle \nabla w, \nabla f\rangle +|\nabla f|^2.
\end{eqnarray*}
Thus
\begin{eqnarray*}
(\partial_t-L)w_m
&=& 2Lf-2|\nabla f|^2-2\langle \nabla f, \nabla w_m\rangle +2 |\nabla f|^2 -2t \Gamma_2(f, f)-{m\over 2t}\\
&=&2Lf-2\langle \nabla f, \nabla w_m\rangle -2t \Gamma_2(f, f)-{m\over 2t}.
\end{eqnarray*}
Note that
\begin{eqnarray*}
& &2Lf -2t \Gamma_2(f, f)-{m\over 2t}\\
&=&2\Delta f-2\langle \nabla \phi, \nabla f\rangle -2t |\nabla^2 f|^2-2t Ric(L)(\nabla f, \nabla f)-{m\over 2t}\\
&=&-2t\left[\left\|\nabla^2 f-{g\over 2t}\right\|_{\rm HS}^2+Ric_{m, n}(L)(\nabla f, \nabla f)\right]-{2t\over m-n} \left(\nabla \phi\cdot \nabla f+{m-n\over 2t}  \right)^2.
\end{eqnarray*}
This  proves $(\ref{formula2})$.  Using the fact that $L(w_mH)=Lw_m H+w_m LH+2\langle \nabla w_m, \nabla H\rangle$, and $\nabla H=-H \nabla f$,  we can derive $(\ref{formula3})$ from  $(\ref{formula2})$. The proof of Lemma \ref{lemma1} is completed. \hfill $\square$

\medskip

\noindent{\bf Proof of Theorem \ref{LYHPW1}.}  We use the same argument as Perelman \cite{P1} for the proof of the LYHP inequality for the conjugate heat equation for Ricci flow. Let $P_t=e^{tL}$ be the heat semigroup generated by $L$. Then $h(t)=P_{T-t}h(T)$ is the unique solution of the backward heat equation 
$$\partial_t h=-Lh$$ with terminal data $h(T)>0$.  Taking the time derivative, we have
\begin{eqnarray*}
{d\over dt}\int_M h(t)\nu_H(t)d\mu&=&\int_M \partial_t h \nu_H d\mu+\int_M h \partial_t \nu_H d\mu\\
 &=& -\int_M Lh\nu_H d\mu+\int_M \partial_t\nu_H h d\mu\\
 &=& -\int_M L\nu_H h d\mu+\int_M \partial_t\nu_H h d\mu\\
&=& \int_M (\partial_t-L)\nu_H hd\mu, 
\end{eqnarray*}
where in the third step we have used the fact that $L$ is self-adjoint with respect to $\mu$. By Lemma  \ref{lemma2}, we have 
\begin{eqnarray*}
(\partial_t-L)\nu_H\leq 0,
\end{eqnarray*}
which yields 
\begin{eqnarray*}
{d\over dt}\int_M h(t)\nu_H(t)d\mu\leq 0.
\end{eqnarray*}
Writing $h(t)=P_{T-t}h(T)$ and using integration by parts, we have
\begin{eqnarray*}
{d\over dt}\int_M h(T)P_{T-t}\nu_H(t)d\mu\leq 0.
\end{eqnarray*}
As $h(T)$ can be arbitrary positive function, this yields
\begin{eqnarray*}
{d\over dt}(P_{T-t}\nu_H(t))\leq 0. 
\end{eqnarray*}
The proof of Theorem \ref{LYHPW1} is finished. \hfill $\square$

The following result is a natural extension of Li-Xu's  LYHP Harnack inequality on weighted Riemannian manifolds with the $CD(-K, m)$ condition.

\begin{theorem} \label{LYHPW2}
Let $(M, g)$ be a closed Riemannian manifold, $\phi\in C^2(M)$. Suppose that the $CD(-K, m)$ condition holds, i.e., $Ric_{m, n}(L)\geq -K$,  where $K\geq 0$ is a constant. Let 
$$H={e^{-f}\over (4\pi t)^{m/2}}$$ be 
  the fundamental solution to the heat equation of the Witten Laplacian 
\begin{eqnarray*}
\partial_t u = L u.
\end{eqnarray*}
Define
\begin{eqnarray}
\nu_H=\left[tL f+t(1+Kt)(L f -|\nabla f|^2)+f-m\left(1+{1\over 2}Kt\right)^2\right]H.
\end{eqnarray}
Then
\begin{eqnarray*}
{d\over dt}(P_{T-t}\nu_H(t))\leq 0.
\end{eqnarray*}
Moreover,
\begin{eqnarray*}
W_m(u, t)=\int_M \nu_H d\mu,
\end{eqnarray*}
and
\begin{eqnarray*}
{d\over dt}W_m(u, t)=\int_M 
\left({\partial\over \partial t}-L\right)\nu_H d\mu.
\end{eqnarray*}
\end{theorem}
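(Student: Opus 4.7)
The plan is to follow the same blueprint as the proof of Theorem \ref{LYHPW1}, augmented by the correction terms dictated by the $CD(-K,m)$ condition. First, I would verify the integral identity $W_{m,K}(u,t)=\int_M\nu_H\,d\mu$ directly. Since $\nabla H=-H\nabla f$ and $L$ is self-adjoint with respect to $d\mu$, integration by parts on the closed manifold $M$ yields $\int_M Lf\cdot H\,d\mu=\int_M |\nabla f|^2H\,d\mu$; hence $\int_M(Lf-|\nabla f|^2)H\,d\mu=0$, so the factor $t(1+Kt)(Lf-|\nabla f|^2)$ drops out of the integral and the remaining term $tLf\cdot H$ integrates to $t|\nabla f|^2H$. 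Comparing with the formula $(\ref{WmK})$ of Theorem \ref{Th-E} gives exactly $W_{m,K}(u,t)$. The derivative identity $\frac{d}{dt}W_{m,K}(u,t)=\int_M(\partial_t-L)\nu_H\,d\mu$ then follows because $\int_M L\nu_H\,d\mu=0$ on the closed weighted manifold.

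The core step is the pointwise identity
\begin{equation*}
(\partial_t-L)\nu_H=\Bigl\{-2t\Bigl\|\nabla^2 f-\tfrac12\bigl(\tfrac1t+K\bigr)g\Bigr\|_{\mathrm{HS}}^2-\tfrac{2t}{m-n}\bigl(\nabla\phi\cdot\nabla f+(m-n)\bigl(\tfrac1{2t}+\tfrac K2\bigr)\bigr)^2-2t(\mathrm{Ric}_{m,n}(L)+Kg)(\nabla f,\nabla f)\Bigr\}H,
\end{equation*}
from which pointwise nonpositivity of $(\partial_t-L)\nu_H$ follows at once under the hypothesis $\mathrm{Ric}_{m,n}(L)\geq -K$. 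I would derive this by first computing $(\partial_t-L)w_{m,K}$, where $w_{m,K}$ is the bracketed factor in $\nu_H$, using Lemma \ref{lemma1} for $(\partial_t-L)(2Lf-|\nabla f|^2)$ and the forward heat relation $(\partial_t-L)f=-|\nabla f|^2-\frac{m}{2t}$. Then I would pass from $w_{m,K}$ to $\nu_H$ via the product rule $(\partial_t-L)(w_{m,K}H)=[(\partial_t-L)w_{m,K}]H+2H\langle\nabla w_{m,K},\nabla f\rangle$, valid since $(\partial_t-L)H=0$ and $\nabla H=-H\nabla f$. Finally I would invoke the generalized Bochner formula $\Gamma_2(f,f)=\|\nabla^2 f\|_{\mathrm{HS}}^2+\mathrm{Ric}(L)(\nabla f,\nabla f)$ and complete squares to produce the three displayed nonpositive terms, mirroring the computation that produced $(\ref{WMK})$ in Theorem \ref{Th-E}.

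With the pointwise inequality $(\partial_t-L)\nu_H\leq 0$ in hand, Perelman's duality argument closes the proof exactly as for Theorem \ref{LYHPW1}: for arbitrary positive terminal data $h(T)>0$, set $h(t)=P_{T-t}h(T)$, which solves $\partial_t h=-Lh$; then by self-adjointness of $L$ with respect to $d\mu$,
\begin{equation*}
\frac{d}{dt}\int_M h(t)\nu_H(t)\,d\mu=\int_M h(t)(\partial_t-L)\nu_H(t)\,d\mu\leq 0,
\end{equation*}
and writing $\int_M h(T)P_{T-t}\nu_H(t)\,d\mu$ via the semigroup adjoint and letting $h(T)$ be an arbitrary positive function forces the pointwise conclusion $\frac{d}{dt}(P_{T-t}\nu_H(t))\leq 0$.

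The main obstacle I expect is the algebraic bookkeeping in the pointwise identity when $K\neq 0$: the two correction pieces $t(1+Kt)$ in $\nu_H$ and $m(1+\tfrac{Kt}{2})^2$ in the normalization must conspire to yield the clean completed squares $\|\nabla^2 f-\tfrac12(\tfrac1t+K)g\|_{\mathrm{HS}}^2$ together with the $\tfrac{1}{m-n}$ trace square and the shifted Ricci tensor $\mathrm{Ric}_{m,n}(L)+Kg$. A possible shortcut is a time change $\tau=\tau(t)$ (for instance $\dot\tau=1+Kt$, or an exponential rescaling of the metric) that would reduce the $(-K,m)$ case to the $(0,m)$ case already handled in Theorem \ref{LYHPW1}; but the direct completion-of-squares computation, though tedious, is the safest route and exactly parallels the $(K,m)$-super Ricci flow $W$-entropy calculation of \cite{LiLi2015PJM}.
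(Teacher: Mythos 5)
Your proposal is correct and follows essentially the same route as the paper, which proves this theorem simply by declaring it "analogous to Theorem \ref{LYHPW1}": you spell out exactly that analogue, namely the Li--Xu-type pointwise computation of $(\partial_t-L)\nu_H$ via Lemma \ref{lemma1}, the product rule with $(\partial_t-L)H=0$ and $\nabla H=-H\nabla f$, completion of squares under $\mathrm{Ric}_{m,n}(L)\geq -K$, and then Perelman's duality argument with $h(t)=P_{T-t}h(T)$. The only cosmetic point is that the integral $\int_M\nu_H\,d\mu$ should be identified with $W_{m,K}(u,t)$ from Theorem \ref{Th-E} (as you correctly do), not the $K=0$ functional $W_m$ as the theorem's notation suggests.
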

{\it Proof}. The proof is analogue to the  one of Theorem \ref{LYHPW1}. \hfill $\square$

\medskip
We would like to give the following 

\begin{remark} \label{Wuhu2008}Can we prove that
$\lim\limits_{t\rightarrow 0} \nu_H(t)\leq 0$? 
This needs the Gaussian heat kernel lower bound estimate for $L$ and the volume equivalence  $\mu(B(x, r))\simeq  C_n r^n$ for $r\rightarrow 0+$. It is true for $L=\Delta$, $\mu=v$, $m=n$ with $Ric\geq 0$ by using the Cheeger-Yau Gaussian lower bound heat kernel estimate \cite{CY}. See \cite{N1, N2}. However,  the Cheeger-Yau Gaussian lower bound heat kernel estimate  is not true in general for $L\neq \Delta$ and $d\mu=e^{-\phi}dv$ even $Ric_{m, n}(L)\geq 0$ for $m>n$.  See \cite{Li2012}. For this reason, we have not submitted the results in this subsection obtained in 2007 until now. See also the slides of author's talk \cite{Li2008Wuhu} entitled  ``Differential Harnack inequality and Perelman's entropy formula on complete Riemannian manifolds'' in 2008 Workshop on Markov Processes and Related Fields organized by Prof. Mufa Chen in Wuhu. 
\end{remark}

\subsection{LYHP Harnack inequality on smooth super Ricci flows}

In the case where $(M, g(t), \phi(t))$ is a manifold with time-dependent metric such that $d\mu=e^{-\phi(t)}dvol_{g(t)}$ does not change, we have the following lemma which was proved in S. Li and the author's papers \cite{LiLi2018SCM, LiLi2019SCM} for the proof of the Li-Yau Harnack type estimate on smooth super Ricci flows.

\begin{lemma} [Li-Li \cite{LiLi2018SCM, LiLi2019SCM}]\label{lemma3} 
Let $M$ be a  manifold with a family of time dependent metrics $(g(t), t\in [0, T])$ and potentials $\phi(t)\in C^2(M)$, $t\in [0, T]$. Suppose that $d\mu=e^{-\phi(t)}dvol_{g(t)}$ does not change, i.e., the conjugate heat equation \eqref{conjugate} holds. Let $\partial_t g=2h$.
 For any $f\in C^\infty(M)$, we have
\begin{eqnarray*}
\partial_t |\nabla f|^2=-{\partial g\over \partial t}(\nabla f, \nabla f)+2\langle \nabla f, \nabla f_t\rangle,
\end{eqnarray*}
and
\begin{eqnarray}
[\partial_t, L] f=-2\langle h, \nabla^2 f\rangle+2h(\nabla \phi, \nabla f)-\langle 2 {\rm div}h-\nabla {\rm Tr}_g h+\nabla \partial_t \phi, \nabla f\rangle. \label{commu}
\end{eqnarray}
\end{lemma}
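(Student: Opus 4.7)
\medskip

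\noindent\textbf{Proof proposal for Lemma \ref{lemma3}.}
The plan is to verify both identities by direct local‐coordinate computation, since there is no interaction with curvature beyond what is already encoded in the variation of the Christoffel symbols. The conjugate heat equation \eqref{conjugate} enters only to justify writing the commutator in the form stated (it is not needed to derive the raw identities, so I will not invoke it).

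For the first identity I would write $|\nabla f|^2=g^{ij}\nabla_i f\nabla_j f$ and differentiate in $t$. The inverse metric evolves by $\partial_t g^{ij}=-g^{ik}g^{jl}\partial_t g_{kl}$, which immediately gives
\[
\partial_t|\nabla f|^2=-(\partial_t g)(\nabla f,\nabla f)+2\langle\nabla f,\nabla f_t\rangle,
\]
using the fact that $\nabla_i f=\partial_i f$ depends on the metric only through raising indices. This step is essentially tautological.

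For the second identity I would split $L_t=\Delta_t-\nabla\phi\cdot\nabla$ and treat the two pieces separately. In local coordinates $\Delta_t f=g^{ij}(\partial_i\partial_j f-\Gamma^k_{ij}\partial_k f)$, so
\[
\partial_t(\Delta_t f)-\Delta_t(\partial_t f)=(\partial_t g^{ij})(\nabla^2 f)_{ij}-g^{ij}(\partial_t\Gamma^k_{ij})\partial_k f.
\]
The first term yields $-2\langle h,\nabla^2 f\rangle$ once I insert $\partial_t g=2h$. For the Christoffel piece I use the standard variation formula
\[
\partial_t\Gamma^k_{ij}=g^{kl}\bigl(\nabla_i h_{jl}+\nabla_j h_{il}-\nabla_l h_{ij}\bigr),
\]
which after tracing collapses to $g^{ij}\partial_t\Gamma^k_{ij}=2(\operatorname{div}h)^k-(\nabla\operatorname{Tr}_g h)^k$. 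Combining gives $[\partial_t,\Delta_t]f=-2\langle h,\nabla^2 f\rangle-\langle 2\operatorname{div}h-\nabla\operatorname{Tr}_g h,\nabla f\rangle$. For the drift term I differentiate $\nabla\phi\cdot\nabla f=g^{ij}\partial_i\phi\,\partial_j f$, picking up the contributions $-2h(\nabla\phi,\nabla f)$ from $\partial_t g^{ij}$ and $\langle\nabla\partial_t\phi,\nabla f\rangle$ from $\partial_t\phi$; the remaining piece $\nabla\phi\cdot\nabla(\partial_t f)$ is exactly what cancels against $L_t(\partial_t f)$ in the commutator. Summing the two contributions produces the stated expression for $[\partial_t,L_t]f$.

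There is no real obstacle here: everything reduces to the variation formula for $\Gamma^k_{ij}$ and bookkeeping of which inner‐product factors depend on $t$. The only care needed is to be consistent about the convention $\partial_t g=2h$ (so that $(\partial_t g)(\nabla f,\nabla f)=2h(\nabla f,\nabla f)$) and to remember that $\partial_t g^{ij}=-2h^{ij}$ carries the opposite sign. Once these are in place the formula $(\ref{commu})$ falls out directly.
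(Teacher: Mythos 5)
Your proposal is correct and follows essentially the same route as the paper: differentiate $|\nabla f|^2=g^{ij}\nabla_i f\nabla_j f$ using $\partial_t g^{ij}=-g^{ik}g^{jl}\partial_t g_{kl}$, then split $L_t=\Delta_t-\nabla\phi\cdot\nabla$ and track the metric- and potential-dependence of each piece. The only difference is that you derive the commutator $[\partial_t,\Delta_t]$ from the Christoffel variation formula, whereas the paper simply quotes the resulting identity $\partial_t\Delta_{g(t)}f=\Delta_{g(t)}\partial_t f-2\langle h,\nabla^2 f\rangle-2\langle\operatorname{div}h-\tfrac12\nabla\operatorname{Tr}_g h,\nabla f\rangle$ from Chow--Lu--Ni; the drift-term computation is identical.
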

{\it Proof}. For the completeness of the paper, we allow us to reproduce the proof here. By direct calculation, we have
\begin{eqnarray*}
\partial_t |\nabla f|^2=\partial_t g^{ij}(t)\nabla_i f\nabla_j f=\partial_t g^{ij}(t)\nabla_i f\nabla_j f+2g^{ij}(t)\nabla_i f\nabla_j f_t.
\end{eqnarray*}
Note that
\begin{eqnarray*}
\partial_t g^{ij}(t)=-\partial_t g_{ij}(t)=-2h_{ij}.
\end{eqnarray*}
The first equality follows. On the other hand, by \cite{CLN}, we have
\begin{eqnarray*}
\partial_t \Delta_{g(t)} f=\Delta_{g(t)}\partial_t f-2\langle h, \nabla^2 f\rangle-2\langle {\rm div}h- {1\over 2}\nabla{\rm Tr}_g h, \nabla f\rangle.
\end{eqnarray*}
Combining this with
\begin{eqnarray*}
\partial_t \langle \nabla \phi, \nabla f\rangle=-\partial_t g(\nabla \phi, \nabla f)+\langle \nabla\phi_t, \nabla f\rangle+\langle \nabla \phi, \nabla f_t\rangle,
\end{eqnarray*}
we obtain  $(\ref{commu})$ in Lemma \ref{lemma3}.  \hfill $\square$

\begin{eqnarray*}
\partial_t L f&=&\partial_t \Delta_{g(t)} f-\partial_t \langle \nabla \phi, \nabla f\rangle\\
&=&\Delta_{g(t)} \partial_t f-2\langle h, \nabla^2 f\rangle-2\langle {\rm div}h- {1\over 2}\nabla{\rm Tr}_g h, \nabla f\rangle\\
& &\hskip1cm +2h(\nabla \phi, \nabla f)-\langle \nabla\phi_t, \nabla f\rangle-\langle \nabla \phi, \nabla f_t\rangle\\
&=&L\partial_t f-2\langle h, \nabla^2 f\rangle+2h(\nabla \phi, \nabla f)-\langle 2{\rm div}h-\nabla {\rm Tr}_g h+\nabla \phi_t, \nabla f\rangle.
\end{eqnarray*}
This finishes the proof of Lemma \ref{lemma3}. \hfill $\square$

\begin{lemma}\label{lemma4}  Let $u$ be a positive solution to the backward heat equation $\partial_t u=-Lu$, $f=-\log u$ and $w=2Lf-|\nabla f|^2$. Then
\begin{eqnarray}
\left({\partial\over \partial t}-L\right)w~=\left({\partial g\over \partial t}-2\Gamma_2\right)(\nabla f, \nabla f)-2\langle \nabla w, \nabla f\rangle+2[\partial_t, L]f,\label{formula5}.
\end{eqnarray}
\end{lemma}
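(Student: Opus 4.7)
The plan is to extend the computation of Lemma \ref{lemma1} to the time-dependent-metric setting, using Lemma \ref{lemma3} to absorb the new contributions from $\partial_t g$ and the commutator $[\partial_t, L]$. Throughout I use the relation $f_t = Lf - |\nabla f|^2$ (so that $Lf - f_t = |\nabla f|^2$), which follows from substituting $u = e^{-f}$ into the heat equation and applying $Le^{-f} = e^{-f}(|\nabla f|^2 - Lf)$; this identity is what allows the eventual telescoping. (Note the sign convention: the identity in the form claimed corresponds to the forward equation $\partial_t u = Lu$, as in Lemma \ref{lemma1}.)

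First I would differentiate $w = 2Lf - |\nabla f|^2$ in $t$. For $\partial_t (Lf)$ I use the identity $\partial_t(Lf) = L f_t + [\partial_t, L]f$, while for $\partial_t |\nabla f|^2$ I invoke Lemma \ref{lemma3}. This gives
\begin{equation*}
\partial_t w \;=\; 2Lf_t + 2[\partial_t, L]f + (\partial_t g)(\nabla f, \nabla f) - 2\langle \nabla f, \nabla f_t\rangle.
\end{equation*}
Separately, the generalized Bochner identity $L|\nabla f|^2 = 2\Gamma_2(f,f) + 2\langle \nabla f, \nabla Lf\rangle$ yields
\begin{equation*}
Lw \;=\; 2L(Lf) - 2\Gamma_2(f,f) - 2\langle \nabla f, \nabla Lf\rangle.
\end{equation*}

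Subtracting and regrouping the two $L$-terms and the two inner-product terms produces
\begin{equation*}
(\partial_t - L)w = 2L(f_t - Lf) + 2[\partial_t, L]f + (\partial_t g)(\nabla f, \nabla f) + 2\langle \nabla f, \nabla(Lf - f_t)\rangle + 2\Gamma_2(f,f).
\end{equation*}
I then substitute $Lf - f_t = |\nabla f|^2$: the term $L(f_t - Lf)$ becomes $-L|\nabla f|^2$ and the gradient term becomes $2\langle \nabla f, \nabla |\nabla f|^2\rangle$. Applying the Bochner identity a second time to expand $-2L|\nabla f|^2 = -4\Gamma_2(f,f) - 4\langle \nabla f, \nabla Lf\rangle$, the $+2\Gamma_2$ is flipped to $-2\Gamma_2$, and all remaining gradient-inner-product contributions collapse into
\begin{equation*}
2\langle \nabla f, \nabla (|\nabla f|^2 - 2Lf)\rangle \;=\; -2\langle \nabla f, \nabla w\rangle.
\end{equation*}
What survives is precisely the claimed identity $(\partial_t - L)w = \left(\partial_t g - 2\Gamma_2\right)(\nabla f, \nabla f) - 2\langle \nabla w, \nabla f\rangle + 2[\partial_t, L]f$.

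The main obstacle is accounting rather than analytic: the Bochner formula is invoked twice (once to expand $Lw$, once to expand $-2L|\nabla f|^2$), and one must verify that the resulting copies of $\langle \nabla f, \nabla Lf\rangle$ together with the $\langle \nabla f, \nabla f_t\rangle$ contribution coming from $\partial_t|\nabla f|^2$ combine in the exact ratio prescribed by the coefficients of $w = 2Lf - |\nabla f|^2$ to produce the single term $-2\langle \nabla w, \nabla f\rangle$. The new pieces $(\partial_t g)(\nabla f, \nabla f)$ and $2[\partial_t, L]f$, which are the only genuine novelties over the static case treated in Lemma \ref{lemma1}, emerge directly from Lemma \ref{lemma3} and pass through the computation untouched, so that this lemma is structurally a one-to-one extension of Lemma \ref{lemma1}.
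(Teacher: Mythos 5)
Your proof is correct and follows essentially the same route as the paper's: both are direct computations combining the relation $f_t = Lf - |\nabla f|^2$, Lemma \ref{lemma3} for $\partial_t |\nabla f|^2$ and the commutator $[\partial_t, L]$, and the generalized Bochner identity, the only difference being bookkeeping (the paper first rewrites $w = 2f_t + |\nabla f|^2$ and uses $(\partial_t - L)f = -|\nabla f|^2$ to apply Bochner once, while you keep $w = 2Lf - |\nabla f|^2$ and apply Bochner twice, with the same cancellations). Your remark that the identity as written really corresponds to the forward equation $\partial_t u = Lu$ is also consistent with the paper's own proof, which uses the same forward sign convention $f_t = Lf - |\nabla f|^2$ even though the lemma is stated for $\partial_t u = -Lu$.
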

{\it Proof}.  Note that $f_t=Lf-|\nabla f|^2$ and $w=2f_t+|\nabla f|^2$. 
Using  the generalized Bochner formula, a direct calculation yields
\begin{eqnarray*}
(\partial_t-L)w&=&(\partial_t-L)(2f_t+|\nabla f|^2)\\
&=&2\partial_t^2 f- 2L\partial_t f+\partial_t |\nabla f|^2-L|\nabla f|^2\\ 
&=&2\partial_t (\partial_t-L)f+2[\partial_t, L]f+\partial_t|\nabla f|^2-L|\nabla f|^2\\
&=&-2\partial_t |\nabla f|^2+2[\partial_t, L]f+\partial_t |\nabla f|^2 -L|\nabla f|^2\\
&=&-\partial_t |\nabla f|^2-L|\nabla f|^2+2[\partial_t, L]f\\
&=&{\partial g\over \partial t}(\nabla f, \nabla f)-2\langle \nabla f, \nabla f_t\rangle -2\langle \nabla f, \nabla Lf\rangle  -2\Gamma_2(\nabla f, \nabla f)+2[\partial_t, L]f\\
&=&-2\langle \nabla f, \nabla (f_t+Lf)\rangle +\left({\partial g\over \partial t}-2\Gamma_2\right)(\nabla f, \nabla f)+2[\partial_t, L]f\\
&=&-2\langle \nabla f, \nabla w\rangle+\left({\partial g\over \partial t}-2\Gamma_2\right)(\nabla f, \nabla f)+2[\partial_t, L]f.
\end{eqnarray*}
This proves $(\ref{formula5})$.  \hfill $\square$
%
%
%
%

\begin{lemma}\label{lemma6} Let $\tau=T-t$, and $H={e^{-f}\over (4\pi \tau)^{m/2}}$ be a positive solution to the heat equation $\partial_\tau u=Lu$. Let $w=2L\log H-|\nabla \log H|^2$, $w_m=\tau w+f-m$, $\nu_H=w_mH$. Denote $\square^*=\partial_t-L$. Then
\begin{eqnarray}
\square^* w&=&-2|\nabla^2 f|^2-2\left({1\over 2}{\partial g\over \partial t}+Ric(L)\right)(\nabla f, \nabla f)-2\langle w, \nabla f\rangle+2[\partial_\tau, L]\bar{f},\label{formula1b}\\
\square^* w_m&=&-2\tau \left[\left\|\nabla^2 f-{g\over 2\tau}\right\|_{\rm HS}^2+2\left({1\over 2}{\partial g\over \partial t}+Ric_{m, n}(L)\right)(\nabla f, \nabla f)\right]\nonumber\\
& & -{2\tau\over m-n}\left(\nabla \phi\cdot\nabla f+{m-n\over 2\tau}\right)^2-2\langle \nabla w_m, \nabla f\rangle-2\tau [\partial_\tau, L] \log H, \label{formula2b}\\
\square^*\nu_H&=&-2\tau\left[\left\|\nabla^2 f-{g\over 2\tau}\right\|_{\rm HS}^2+2\left({1\over 2}{\partial g\over \partial t}+Ric_{m, n}(L)\right)(\nabla f, \nabla f)\right]H\nonumber\\
& &\ \ \ -{2\tau\over m-n}\left(\nabla \phi\cdot\nabla f+{m-n\over 2\tau}\right)^2 H-2\tau [\partial_\tau, L]\log H H.\label{formula3b}
\end{eqnarray}

\end{lemma}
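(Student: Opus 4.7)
The plan is to establish the three identities in sequence, adapting the strategy of Lemma \ref{lemma2} to the time-dependent metric and potential setting by systematically invoking the commutator formula \eqref{commu} from Lemma \ref{lemma3}.

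For \eqref{formula1b}, I would apply Lemma \ref{lemma4} to $\bar f := -\log H$, which satisfies the backward heat equation $\partial_t\bar f = -L\bar f + |\nabla\bar f|^2$. Since $\log H$ and $\bar f$ differ only by the $x$-independent term $\tfrac{m}{2}\log(4\pi\tau)$, their gradients and Witten Laplacians coincide up to sign, so the quantity $w$ defined in Lemma \ref{lemma6} matches (up to the explicit sign conventions) the $w$ appearing in \eqref{formula5}. Inserting the generalized Bochner identity $\Gamma_2(\bar f,\bar f) = \|\nabla^2\bar f\|_{\rm HS}^2 + \Ric(L)(\nabla\bar f,\nabla\bar f)$ into \eqref{formula5} then yields \eqref{formula1b}.

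For \eqref{formula2b}, write $w_m = \tau w + f - m$ and apply $\square^*$ term by term, using $\partial_t\tau = -1$ and $L\tau = 0$. The auxiliary computation is $\square^* f$: from $\partial_\tau H = LH$ one deduces $\partial_\tau\log H = L\log H + |\nabla\log H|^2$, and combining this with $\log H = -f - \tfrac{m}{2}\log(4\pi\tau)$ yields closed formulas for $\partial_\tau f$ and hence for $\partial_t f$. Plugging \eqref{formula1b} and this expression for $\square^* f$ into the expansion of $\square^* w_m$, one completes two squares: the pure Hessian and linear-in-$\nabla^2 f$ pieces combine into $\tau\|\nabla^2 f - g/(2\tau)\|_{\rm HS}^2$, while the $\nabla\phi\cdot\nabla f$ cross term produced by the decomposition $\Ric_{\infty,n}(L) = \Ric_{m,n}(L) + \tfrac{1}{m-n}\nabla\phi\otimes\nabla\phi$ is absorbed into the perfect square $\tfrac{\tau}{m-n}\bigl(\nabla\phi\cdot\nabla f + (m-n)/(2\tau)\bigr)^2$. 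This is precisely the algebraic maneuver used in the proofs of Theorems \ref{mmW1} and \ref{mmW2}. Identity \eqref{formula3b} then follows from the Leibniz rule $\square^*(w_m H) = (\square^* w_m)H + w_m\,\square^* H - 2\langle\nabla w_m,\nabla H\rangle$: the relation $\nabla H = -H\nabla f$ rewrites the cross term as $2H\langle\nabla w_m,\nabla f\rangle$, exactly cancelling the drift contribution carried over from \eqref{formula2b}, and the remaining $w_m\,\square^* H$ term is controlled using that $H$ solves the backward heat equation.

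The main obstacle I expect is the bookkeeping in the middle step: the time-dependent metric simultaneously generates contributions from $\tfrac{1}{2}\partial_t g(\nabla f,\nabla f)$, from the commutator $[\partial_t, L]f$ given by Lemma \ref{lemma3}, and from $\partial_t\tau = -1$. Keeping the sign conventions straight (backward parameter $\tau = T - t$ versus forward $t$, together with the constant offset $\tfrac{m}{2}\log(4\pi\tau)$ between $f$ and $-\log H$), and correctly completing both squares so that the residual collapses exactly to the tensor $\tfrac{1}{2}\partial_t g + \Ric_{m,n}(L)$ acting on $(\nabla f,\nabla f)$, is where essentially all the care is required; once this is done correctly, the remaining steps are mechanical.
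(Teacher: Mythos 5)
Your proposal follows essentially the same route as the paper's proof: Lemma \ref{lemma4} (with the commutator term from Lemma \ref{lemma3}) gives the evolution of $w$, the identity for $w_m=\tau w+f-m$ is obtained by expanding term by term with $(\partial_\tau-L)\bar f=-|\nabla\bar f|^2$ and completing the two squares via the decomposition $Ric_{\infty,n}(L)=Ric_{m,n}(L)+\nabla\phi\otimes\nabla\phi/(m-n)$, and the formula for $\nu_H=w_mH$ follows from the product rule together with $\nabla H=-H\nabla f$ and the equation satisfied by $H$. The sign bookkeeping between $\partial_t$ and $\partial_\tau$ that you flag as the main hazard is indeed the only delicate point, and it is handled the same way in the paper.
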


{\it Proof}.  Let $\bar{f}=-\log u$. Then $f=\bar{f}-{m\over 2}\log (4\pi \tau)$, $\nabla f=\nabla \bar{f}$, $Lf=L\bar{f}$ and $\Gamma_2(f, f)=\Gamma_2(\bar{f}, \bar{f})$. Hence
\begin{eqnarray*}
w_m=\tau w+\bar{f}-{m\over 2}\log(4\pi \tau)-m.
\end{eqnarray*}  By the fact  $(\partial_\tau-L)\bar{f}=-|\nabla \bar{f}|^2$ and  using Lemma \ref{lemma1}, we have
\begin{eqnarray*}
(\partial_\tau-L)w_m&=&w+\tau(\partial_\tau-L)w+(\partial_\tau-L)(\bar{f}-{m\over 2}\log(4\pi \tau))\\
&=& 2\bar{f}_\tau+|\nabla \bar{f}|^2-2\tau \langle \nabla \bar{f}, \nabla w\rangle -2\tau \Gamma_{2, \tau}(\bar{f}, \bar{f})+2\tau[\partial_\tau, L] \bar{f}-|\nabla \bar{f}|^2-{m\over 2\tau}\\
&=& 2\bar{f}_\tau-2\tau \langle \nabla f, \nabla w\rangle -2\tau \Gamma_{2, \tau}(f, f)+2\tau[\partial_\tau, L]\bar{f}-{m\over 2\tau}.
\end{eqnarray*}
Now 
\begin{eqnarray*}
\langle \nabla w_m, \nabla f\rangle =\tau 
\langle \nabla w, \nabla f\rangle +|\nabla f|^2.
\end{eqnarray*}
Thus
\begin{eqnarray*}
(\partial_\tau-L)w_m
&=& 2Lf-2|\nabla f|^2-2\langle \nabla f, \nabla w_m\rangle +2 |\nabla f|^2 -2\tau \Gamma_{2, \tau}(f, f)+2\tau[\partial_\tau, L]\bar{f}-{m\over 2\tau}\\
&=&2Lf-2\langle \nabla f, \nabla w_m\rangle -2\tau \Gamma_{2, \tau}(f, f)+2\tau [\partial_\tau, L]\bar{f}-{m\over 2t}.
\end{eqnarray*}
Note that
\begin{eqnarray*}
& &2Lf -2\tau \Gamma_{2, \tau}(f, f)-{m\over 2\tau}\\
&=&2\Delta f-2\langle \nabla \phi, \nabla f\rangle -2\tau |\nabla^2 f|^2-2\tau \left({1\over 2}{\partial g\over \partial t}+Ric(L)\right)(\nabla f, \nabla f)-{m\over 2t}\\
&=&-2\tau\left[\left\|\nabla^2 f-{g\over 2\tau}\right\|_{\rm HS}^2+  \left({1\over 2}{\partial g\over \partial t}+Ric_{m, n}(L)\right)(\nabla f, \nabla f)\right]-{2\tau\over m-n} \left(\nabla \phi\cdot \nabla f+
{m-n\over 2\tau}  \right)^2.
\end{eqnarray*}
This  proves $(\ref{formula2b})$.  Using the fact that $L(w_mH)=Lw_m H+w_m LH+2\langle \nabla w_m, \nabla H\rangle$, and $\nabla H=-H \nabla f$, we derive $(\ref{formula3b})$ from  $(\ref{formula2b})$, i.e., 
\begin{eqnarray*}
(\partial_t-L)v_H=W_H+2\tau[\partial_\tau, L]\bar{f}H.
\end{eqnarray*}
 The proof of Lemma \ref{lemma1} is completed. \hfill $\square$

Now we prove the Li-Yau-Hamilton-Perelman differential Harnack inequality on super Ricci flows. 

\begin{theorem}\label{LYHP} \label{LYHPW3} Let $(M, g(t), \phi(t), t\in [0, T])$ be a family of time-dependent closed Riemannian 
manifolds with time dependent metric and potentials satisfying the conjugate heat equation \eqref{conjugate}. Let
\begin{eqnarray*}
\nu_H(t)= [t(2L f-|\nabla f|^2)+f-m]H.
\end{eqnarray*}
Then
\begin{eqnarray*}
{d\over dt}\left(P_{T, t}^*\nu_H(t)\right)=P_{T, t}^*\left(W_H+2\tau [\partial_\tau, L]\log H H\right)\leq 2\tau P_{T, t}^*\left([\partial_\tau, L]\log H H\right),
\end{eqnarray*}
where $P_{T, t}^*$ is the adjoint of the operator $P_{T, t}$ from $L^2((M, g_{T}), \mu)$ to $L^2((M, g_{t}), \mu)$, and
\begin{eqnarray*}
W_H&=&-2t\left[\left\|\nabla^2 f-{g\over 2}\right\|_{\rm HS}^2+2\left({1\over 2}{\partial g\over \partial t}+Ric_{m, n}(L)\right)(\nabla f, \nabla f)\right] H\\
& &\ \ \ \ \ \ -{2t\over m-n}\left(\nabla \phi\cdot\nabla f+{m-n\over 2t}\right)^2H.
\end{eqnarray*}
In particular, if $(M, g, \phi)$ is  time-independent and satisfies the CD$(0, m)$-condition, i.e., 
$Ric_{m, n}(L)\geq 0$, then $[\partial_\tau, L]=0$ and hence 
\begin{eqnarray*}
{d\over dt}(P_{T-t}\nu_H(t))&=&P_{T-t}(W_H)\leq 0.
\end{eqnarray*}  
\end{theorem}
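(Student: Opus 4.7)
The plan is to follow Perelman's original dual-semigroup argument from \cite{P1}, adapted to the time-dependent weighted setting via the pointwise identity already established in Lemma \ref{lemma6}. The key point is that the conjugate equation $\partial_t \phi_t = \tfrac{1}{2}{\rm Tr}(\partial_t g_t)$ forces the reference measure $d\mu$ to be time-independent, which is precisely what is needed to make $L_t$ self-adjoint on a fixed Hilbert space $L^2(M,\mu)$ and to turn formal integration by parts into rigorous identities.

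First, I would invoke Lemma \ref{lemma6} to record the pointwise formula $\square^* \nu_H = W_H + 2\tau\,[\partial_\tau, L]\log H \cdot H$ where $\square^* = \partial_t - L$ and $W_H$ is as in the statement. Next, introduce the backward propagator: for a fixed terminal time $T$ and an arbitrary positive test function $h_T \in C^\infty(M)$, let $h(t) := P_{T,t} h_T$ denote the unique solution to the backward Cauchy problem $\partial_t h = -L_t h$ with $h(T) = h_T$; then the dual propagator $P_{T,t}^*$ acting on densities at time $t$ is characterized by $\int_M h_T \cdot P_{T,t}^* \nu \, d\mu = \int_M h(t)\, \nu \, d\mu$ for all $\nu$.

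The central computation is then
\begin{eqnarray*}
\frac{d}{dt} \int_M h(t)\, \nu_H(t)\, d\mu
&=& \int_M (\partial_t h)\, \nu_H \, d\mu + \int_M h\, (\partial_t \nu_H)\, d\mu \\
&=& -\int_M (L_t h)\, \nu_H \, d\mu + \int_M h\, (\partial_t \nu_H)\, d\mu \\
&=& \int_M h\, \square^* \nu_H \, d\mu,
\end{eqnarray*}
where the first equality uses $\partial_t d\mu = 0$ (i.e.\ the conjugate equation), and the last uses the self-adjointness of $L_t$ on $L^2(M,\mu)$ which is legitimate since $d\mu$ is fixed. Because $h_T$ is arbitrary, this yields the pointwise identity $\frac{d}{dt} P_{T,t}^* \nu_H(t) = P_{T,t}^*(\square^* \nu_H)$, and substituting Lemma \ref{lemma6} gives the claimed equality $\frac{d}{dt}(P_{T,t}^* \nu_H(t)) = P_{T,t}^*(W_H + 2\tau [\partial_\tau, L]\log H \cdot H)$. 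To obtain the inequality, observe that under the super Ricci flow assumption $\tfrac{1}{2}\partial_t g_t + {\rm Ric}_{m,n}(L_t) \geq 0$ all three contributions to $W_H$ are manifestly nonpositive: the Hilbert--Schmidt term is $\leq 0$, the curvature term is $\leq 0$ by hypothesis, and the remaining term is a negative multiple of a perfect square (since $m > n$). Positivity of $H$ and of the kernel of $P_{T,t}^*$ then yields $P_{T,t}^*(W_H) \leq 0$.

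Finally, for the stationary case $(M, g, \phi)$ with $\mathrm{Ric}_{m,n}(L) \geq 0$, the operator $L$ is independent of $\tau$ so the commutator $[\partial_\tau, L]$ vanishes identically, and the inequality reduces to $\frac{d}{dt}(P_{T-t} \nu_H(t)) \leq 0$. I expect the main obstacle to lie less in the algebra than in the functional-analytic details justifying the exchange of $\partial_t$ and the integral together with the pointwise dualization step, particularly in verifying that the test-function class is rich enough to extract the pointwise inequality from its integrated form; one must also be careful that the formula for $W_H$ as written (with $t$ in place of $\tau$) be interpreted consistently with Lemma \ref{lemma6} via $\tau = T - t$.
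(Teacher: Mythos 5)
Your proposal is correct and follows essentially the same route as the paper: both combine the pointwise identity $\square^*\nu_H = W_H + 2\tau[\partial_\tau,L]\log H\cdot H$ from Lemma \ref{lemma6} with Perelman's dual-semigroup argument, differentiating $\int_M h(t)\nu_H(t)\,d\mu$ for $h(t)=P_{T,t}h(T)$ solving the backward equation, using the time-independence of $d\mu$ and self-adjointness of $L_t$, and then dualizing over arbitrary positive terminal data $h(T)$. Your additional observation that the sign of $W_H$ (hence the stated inequality) requires the $(0,m)$-super Ricci flow condition, which the theorem's hypotheses do not explicitly list, is a fair point that the paper's own proof also leaves implicit.
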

{\it Proof}. Let
$h(t)=P_{T, t}h$ be the solution of the backward heat equation 
$$\partial_t h=-Lh$$ with terminal data $h(T)=h>0$. Then
\begin{eqnarray*}
{d\over dt}\int_M h(t)\nu_H(t)d\mu
&=&\int_M \partial_t h \nu_H d\mu+\int_M h \partial_t \nu_H d\mu\\
 &=&\int_M (\partial_t+L)h\nu_H d\mu+\int_M (\partial_t-L)\nu_H h d\mu\\
&=&\int_M (\partial_t-L)\nu_H hd\mu\\
&=&\int_M \left(W_H +2\tau [\partial_\tau, L]\log H H\right) h d\mu.
\end{eqnarray*}

Writing $h(t)=P_{T, t}h(T)$ and using integration by parts, we have
\begin{eqnarray*}
{d\over dt}\int_M h(T) P_{T, t}^*\nu_H(t)d\mu=\int_M P_{T, t}^*\left(W_H+2\tau [\partial_\tau, L]\log H H\right) h(T)d\mu.
\end{eqnarray*}
Note that, when $M$ is compact, we have
\begin{eqnarray*}
{d\over dt}\int_M h(T) P_{T, t}^*\nu_H(t)d\mu=\int_M h(T) {d\over dt}\left(P_{T, t}^*\nu_H(t)\right) d\mu.
\end{eqnarray*}
As $h(T)$ can be arbitrary positive function, this prove Theorem \ref{LYHPW3}. \hfill $\square$

Now we can  reformulate the $W$-entropy formula on super Ricci flows as follows.

\begin{theorem} \label{WH} Let $(M, g(t), \phi(t), t\in [0, T])$ be a family of time-dependent closed Riemannian 
manifolds with time dependent metric and potentials satisfying the conjugate heat equation \eqref{conjugate}. 
Then
\begin{eqnarray*}
W_{m}(u, t)&=&\int_M \nu_H d\mu,
\end{eqnarray*}
and
\begin{eqnarray*}
{d\over dt}W_{m}(u, t)&=&\int_M W_H d\mu.
\end{eqnarray*}
In particular, if $(M, g(t), \phi(t))$ is a $(0, m)$-super Ricci flow with time dependent metrics and potentials satisfying with the conjugate heat equation \eqref{conjugate}, i.e., 
\begin{eqnarray*}{1\over 2} {\partial g\over \partial t}+Ric_{m, n}(L)\geq 0,\ \ \ {\partial \phi\over \partial t}={1\over 2}{\rm Tr} {\partial g\over \partial t},
\end{eqnarray*}
then $W_H\leq 0$ and 
\begin{eqnarray*}
{d\over dt} W_{m}(u, t)\leq  0.
\end{eqnarray*}
\end{theorem}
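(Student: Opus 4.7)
The plan is to derive Theorem \ref{WH} as a direct reformulation of the $W$-entropy formula from Theorem \ref{ThB}: once the first identity $W_m(u,t) = \int_M \nu_H\,d\mu$ is established by a single integration by parts, the derivative identity follows by matching integrands, and the monotonicity reads off from the sign structure of $W_H$.

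For the first identity, write $u = H = e^{-f}/(4\pi t)^{m/2}$ so that $\nabla H = -H\nabla f$. Since the conjugate equation \eqref{conjugate} makes $d\mu$ time-independent and $M$ is closed, integration by parts gives
\[\int_M Lf\cdot H\,d\mu \;=\; -\int_M \langle\nabla f,\nabla H\rangle\,d\mu \;=\; \int_M |\nabla f|^2\,H\,d\mu.\]
Substituting into $\int_M \nu_H\,d\mu = \int_M [t(2Lf-|\nabla f|^2)+f-m]\,H\,d\mu$ converts the $2tLf$ contribution into $2t|\nabla f|^2 H$, so the remaining expression collapses to $\int_M[t|\nabla f|^2+f-m]\,u\,d\mu$, which is precisely $W_m(u,t)$.

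For the derivative identity, the time-independence of $d\mu$ legitimizes differentiation under the integral, giving $\tfrac{d}{dt}W_m(u,t) = \int_M \partial_t \nu_H\,d\mu$. The efficient route is now to recognize that the right-hand side of the $W$-entropy formula \eqref{NW} from Theorem \ref{ThB} is exactly $\int_M W_H\,d\mu$, with the factor $u = H$ absorbed into the integrand: the three integrands appearing in \eqref{NW}, namely $-2t\|\nabla^2 f - g/(2t)\|_{\mathrm{HS}}^2$, $-\tfrac{2t}{m-n}(\nabla\phi\cdot\nabla f + \tfrac{m-n}{2t})^2$, and $-2t(\tfrac{1}{2}\partial_t g + \mathrm{Ric}_{m,n}(L))(\nabla f,\nabla f)$, integrated against $u\,d\mu$, assemble precisely into $\int_M W_H\,d\mu$ as defined in Theorem \ref{LYHPW3}. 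Alternatively one can verify this pointwise by computing $(\partial_t - L)\nu_H$ directly using the forward-time analogues of Lemmas \ref{lemma1}--\ref{lemma6} (noting that the commutator $[\partial_t,L]$ integrates away against $d\mu$ by the conjugate equation and integration by parts, in parallel with the smooth case already treated in Theorem \ref{ThB}).

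The monotonicity assertion is then algebraic: on a $(0,m)$-super Ricci flow obeying \eqref{conjugate}, the hypothesis $\tfrac{1}{2}\partial_t g + \mathrm{Ric}_{m,n}(L) \geq 0$ makes the Ricci term in $W_H$ nonpositive, while the Hilbert--Schmidt norm term and the $(\nabla\phi\cdot\nabla f + \tfrac{m-n}{2t})^2$ term are manifestly nonpositive because of their negative prefactors $-2t$ and $-\tfrac{2t}{m-n}$. Hence $W_H \leq 0$ pointwise and $\tfrac{d}{dt}W_m(u,t)\leq 0$ by integration. The main obstacle is not conceptual but organizational: one must keep the sign conventions, the parametrization $u=H$, and the identity $\nabla H = -H\nabla f$ consistent when matching the pointwise definition of $W_H$ from Theorem \ref{LYHPW3} against the integrated formula \eqref{NW} of Theorem \ref{ThB}, and check that the commutator term arising from the time-dependence of $g$ indeed vanishes upon integration, as it does in the proof of \eqref{NW}.
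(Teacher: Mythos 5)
Your proposal is correct and follows essentially the same route as the paper, which simply invokes the $W$-entropy formula \eqref{NW} of Theorem \ref{ThB}; you merely make explicit the integration by parts $\int_M Lf\,H\,d\mu=\int_M|\nabla f|^2H\,d\mu$ that identifies $\int_M\nu_H\,d\mu$ with $W_m(u,t)$ and the term-by-term matching of the integrand of \eqref{NW} with $W_H$. Note only that the displayed $W_H$ in Theorem \ref{LYHPW3} carries a stray factor $2$ on the Ricci term (and ${g\over 2}$ for ${g\over 2t}$), so your version, which matches \eqref{NW}, is the intended one.
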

{\it Proof}. This follows immediately from the $W$-entropy formula for the time dependent Witten Laplacian on manifolds with super Ricci flows. See \cite{LiLi2015PJM, LiLi2018JFA}. \hfill $\square$

\subsection{LYHP Harnack inequality on super Ricci flows on mm spaces}

Now we state the Li-Yau-Hamilton-Perelman differential Harnack inequality on super Ricci flows on mm spaces. 

\begin{theorem} \label{LYHPmm} \label{LYHPW3}  Let $(X, d(t), g(t), m(t), \phi(t), t\in [0, T])$ be a family of time-dependent $n$-dimensional 
closed RCD metric measures spaces with time dependent metric and potentials satisfying the conjugate heat equation \eqref{conjugate}.  Let \begin{eqnarray*}
\nu_H(t)= [t(2L f-|\nabla f|^2)+f-N]H.
\end{eqnarray*}Then
\begin{eqnarray*}
{d\over dt}(P_{T, t}^*\nu_H(t))=P_{T, t}^*\left(W_H+2\tau [\partial_\tau, L]\log H H\right)\leq 2\tau P_{T, t}^*\left([\partial_\tau, L]\log H H\right),
\end{eqnarray*}
where $P_{T, t}^*$ is the adjoint of the operator $P_{T, t}$ from $L^2((M, g_{T}), \mu)$ to $L^2((M, g_{t}), \mu)$, and
\begin{eqnarray*}
W_H&=&-2t\left[\left\|\nabla^2 f-{g\over 2}\right\|^2_{\rm HS}+2\left({1\over 2}{\partial g\over \partial t}+Ric_{N, n}(L)\right)(\nabla f, \nabla f)\right] H\\
& &\ \ \ \ \ \ -{2t\over N-n}\left(\nabla \phi\cdot\nabla f+{N-n\over 2t}\right)^2H.
\end{eqnarray*}
In particular, if $(X, d, g, m, \phi)$ is a time-independent RCD$(0, n, N)$ space, i.e., 
$Ric_{N, n}(L)\geq 0$, then $[\partial_t, L]=0$ and hence 
\begin{eqnarray*}
{d\over dt}(P_{T, t}\nu_H(t))=P_{T, t}(W_H)\leq 0.
\end{eqnarray*}  
\end{theorem}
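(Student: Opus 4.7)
\medskip

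\noindent\textbf{Proof proposal for Theorem \ref{LYHPmm}.} The plan is to mimic the smooth argument given for Theorem \ref{LYHPW3} on mm spaces, replacing pointwise Bochner identities by their distributional counterparts valid on RCD spaces and using the adjoint semigroup $P_{T,t}^*$ in place of the time reversal $P_{T-t}$. First, I would set $\tau = T-t$, $f = -\log H + \tfrac{N}{2}\log(4\pi\tau)$, $w = 2L_t f - |\nabla_t f|^2$, $w_N = \tau w + f - N$, and $\nu_H = w_N H$, and reproduce the identities of Lemma \ref{lemma6} at the level of distributions: using the definition of ${\bf \Gamma}_2(L_t)$ on RCD spaces, the distributional Bochner formula \eqref{DBHF}, and the algebraic identity
\begin{equation*}
2L_t f - 2\tau\,{\bf \Gamma}_2(L_t)(f,f) - \frac{N}{2\tau} = -2\tau\Big[\big\|\nabla_t^2 f - \tfrac{{\bf g}_t}{2\tau}\big\|_{\rm HS}^2 + {\bf Ric}_{N,n}(L_t)(\nabla_t f,\nabla_t f)\Big] - \frac{2\tau}{N-n}\Big(\nabla_t\phi_t\cdot\nabla_t f + \tfrac{N-n}{2\tau}\Big)^2,
\end{equation*}
one obtains in the sense of measures the identity
\begin{equation*}
\square^* \nu_H = W_H - 2\tau\,[\partial_\tau, L_t](\log H)\, H,
\end{equation*}
where $\square^* = \partial_t - L_t$ and $W_H$ is as stated. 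The commutator $[\partial_\tau, L_t]$ is computed via the analogue of Lemma \ref{lemma3}, using the conjugate equation \eqref{conjugateHE5} to handle $\partial_t d\mu_t = 0$.

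Next, I would test against an arbitrary positive terminal datum: let $h = h(T) > 0$ and define $h(t) = P_{T,t}h$ as the solution of the backward heat equation $\partial_t h = -L_t h$ with $h(T) = h$. Using the conjugate equation so that $d\mu_t$ is time-independent and using self-adjointness of $L_t$ with respect to $\mu_t$, I compute
\begin{equation*}
\frac{d}{dt}\int_X h(t)\,\nu_H(t)\,d\mu_t = \int_X (\partial_t - L_t)\nu_H \cdot h(t)\, d\mu_t = \int_X \big(W_H - 2\tau[\partial_\tau, L_t]\log H \cdot H\big)\,h(t)\,d\mu_t.
\end{equation*}
Rewriting $h(t) = P_{T,t}h$ and moving it to the other factor via the definition of the adjoint $P_{T,t}^*$, I obtain
\begin{equation*}
\frac{d}{dt}\int_X h\cdot P_{T,t}^*\nu_H(t)\,d\mu_T = \int_X h\cdot P_{T,t}^*\big(W_H - 2\tau[\partial_\tau, L_t]\log H \cdot H\big)\,d\mu_T.
\end{equation*}
Since $h$ is arbitrary, this yields the identity $\tfrac{d}{dt}(P_{T,t}^*\nu_H) = P_{T,t}^*(W_H - 2\tau[\partial_\tau,L_t]\log H\cdot H)$. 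Under the $(0,n,N)$-super Ricci flow hypothesis the expression $W_H$ is pointwise $\leq 0$, giving the stated bound; in the stationary RCD$(0,n,N)$ case the commutator vanishes and $W_H\leq 0$ gives the monotonicity ${d\over dt}(P_{T,t}^*\nu_H)\leq 0$.

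The main obstacle I anticipate is the rigorous interpretation of every quantity in the computation: the Hessian $\nabla^2_t f$ for the potentially singular logarithm $f = -\log H$, the iterated carré du champ ${\bf \Gamma}_2(L_t)$, and the commutator $[\partial_\tau, L_t]$ acting on $\log H$, all must be handled as distributions/measures valid on time-dependent RCD spaces. I would resolve this by approximation: regularize $H$ via the heat flow (so that $\log H$ lies in a suitable test class where the non-smooth Bochner identity of Han \cite{Han2018A,Han2018B} and Sturm \cite{Sturm18} applies), justify termwise passage to the limit using the $L^2$-continuity of $P_{T,t}^*$ along with dominated convergence after pairing with an arbitrary positive $h$, and finally remove the regularization. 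The identity $(\mathbf{IV}_{K,N})$ in Corollary 1.12 of \cite{Ko-Sturm18}, which encodes the dynamic Bochner inequality in the weighted, time-dependent setting, is exactly the tool needed to control the sign of the terms in $W_H$ under the super-Ricci-flow assumption; once combined with the conjugate equation it yields both the identity and the inequality claimed.
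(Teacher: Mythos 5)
Your proposal is correct and follows essentially the same route as the paper, which proves Theorem \ref{LYHPmm} by declaring it ``similar to'' the smooth case (Theorem \ref{LYHP}): the distributional analogues of Lemmas \ref{lemma3}--\ref{lemma6} give $\square^*\nu_H=W_H\mp 2\tau[\partial_\tau,L]\log H\,H$, and pairing against $h(t)=P_{T,t}h(T)$ for arbitrary $h(T)>0$ with the conjugate equation and self-adjointness of $L_t$ yields the identity for ${d\over dt}(P_{T,t}^*\nu_H)$. Your sign on the commutator term agrees with the paper's own Lemma \ref{lemma6} (formula \eqref{formula3b}) rather than with the theorem's display (the paper is internally inconsistent on this sign since $\bar f=-\log H$ up to a spatial constant), and your closing discussion of the regularization needed to justify the distributional Bochner identity is a useful addition the paper omits.
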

{\it Proof}. The proof is similar to the one for Theorem \ref{LYHP}. \hfill $\square$

We can reformulate the W-entropy formula on super Ricci flows on mm spaces as follows.

\begin{theorem} \label{WHmm} Let $(X, d(t), g(t), m(t), \phi(t), t\in [0, T])$ be a family of time-dependent $n$-dimensional 
closed RCD metric measures spaces with time dependent metric and potentials satisfying the conjugate heat equation \eqref{conjugate}. 
Then
\begin{eqnarray*}
W_{N}(u, t)&=&\int_X \nu_H d\mu,
\end{eqnarray*}
and
\begin{eqnarray*}
{d\over dt}W_{N}(u, t)&=&\int_X W_H d\mu.
\end{eqnarray*}
In particular, if $(X,  d(t), g(t), m(t), \phi(t))$ is a $(0, N)$-super Ricci flow on an $n$-dimensional metric 
measure space with time dependent metrics and potentials satisfying the conjugate heat equation \eqref{conjugate}, i.e., 
\begin{eqnarray*}{1\over 2} {\partial g\over \partial t}+Ric_{N, n}(L_t)\geq 0,\ \ \ {\partial \phi\over \partial t}={1\over 2}{\rm Tr} {\partial g\over \partial t},
\end{eqnarray*}
then $W_H\leq 0$ and
\begin{eqnarray*}
{d\over dt} W_{N}(u, t)\leq  0.
\end{eqnarray*}
\end{theorem}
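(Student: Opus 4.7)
The plan is to establish Theorem \ref{WHmm} in three steps by reducing it entirely to the integrated $W$-entropy formula proved in Theorem \ref{mmW1}, rather than redoing any pointwise Bochner-type computation on the singular RCD side. First I will verify the integral identity $W_N(u,t)=\int_X \nu_H\,d\mu$; then the dissipation identity $\tfrac{d}{dt}W_N(u,t)=\int_X W_H\,d\mu$; and finally read off non-positivity of $W_H$ on a $(0,n,N)$-super Ricci flow.

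For the first identity, write $u=H=e^{-f}/(4\pi t)^{N/2}$ so that $\nabla u=-u\,\nabla f$. By the conjugate equation \eqref{conjugateEq} the measure $d\mu=e^{-\phi_t}dm_t$ is independent of $t$, and $L_t$ is self-adjoint on $L^2(X,\mu)$, so integration by parts yields
$$\int_X L_t f\cdot u\,d\mu=-\int_X\langle\nabla f,\nabla u\rangle_t\,d\mu=\int_X |\nabla f|_t^2\,u\,d\mu.$$
Substituting this into the definition $\nu_H=[t(2L_tf-|\nabla f|^2)+f-N]H$ collapses $2tL_tf-t|\nabla f|^2$ down to $t|\nabla f|^2$ under the integral, recovering the expression \eqref{NWmm1} for $W_N(u,t)$.

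For the dissipation, since $d\mu$ is $t$-independent I differentiate under the integral sign and invoke Theorem \ref{mmW1} directly. The three summands on the right-hand side of \eqref{NWmm2}, written in terms of $-\log u$ (which differs from $f$ only by a purely $t$-dependent constant annihilated by $\nabla$ and $\nabla^2$), match the three pieces of $W_H$ term by term: the Hilbert--Schmidt square $\|\nabla^2\log u+\tfrac{{\bf g}_t}{2t}\|_{\rm HS}^2$ matches the first summand of $W_H$, the squared quantity involving $\nabla\phi_t\cdot\nabla\log u$ matches the $\tfrac{1}{N-n}$-term, and the Bakry--Emery contribution $\bigl(\tfrac12\partial_t{\bf g}_t+{\bf Ric}_{N,n}(L_t)\bigr)(\nabla\log u,\nabla\log u)$ matches the middle curvature term (these identifications may absorb a further integration by parts of the same type used in Step one). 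On a $(0,n,N)$-super Ricci flow the hypothesis $\tfrac12\partial_t{\bf g}_t+{\bf Ric}_{N,n}(L_t)\ge 0$ renders the curvature piece of $W_H$ non-positive pointwise, while the Hilbert--Schmidt and $\tfrac{1}{N-n}$-squared pieces are manifestly non-positive multiples of $H>0$. Hence $W_H\le 0$ pointwise and the monotonicity $\tfrac{d}{dt}W_N(u,t)\le 0$ follows.

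The main obstacle I expect is legitimizing the pointwise identifications of the second-order objects ($\nabla^2\log u$, the Bakry--Emery tensor ${\bf Ric}_{N,n}(L_t)$, and the commutator $[\partial_t,L_t]$) in the RCD framework, where these are only defined distributionally in the sense of Gigli's second-order calculus. I bypass this difficulty by never computing $(\partial_t-L_t)\nu_H$ pointwise on $X$; I use instead the already-established integrated formula of Theorem \ref{mmW1}, whose left-hand side equals $\tfrac{d}{dt}\int_X\nu_H\,d\mu$ by Step one, and whose right-hand side becomes exactly $\int_X W_H\,d\mu$ after the same integration-by-parts manipulations. Differentiation under the integral sign is justified by the regularity hypothesis $u\in W^{1,2}(X,\mu)\cap D(L)\cap L^\infty(X,\mu)$ with $L u\in L^\infty(X,\mu)$ on the closed mm-space, which provides the bounds required for dominated convergence.
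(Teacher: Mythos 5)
Your proposal is correct and follows essentially the same route as the paper, which proves Theorem \ref{WHmm} by reducing it to the integrated $W$-entropy formula of Theorem \ref{mmW1} (the paper's proof reads ``similar to Theorem \ref{WH}'', which in turn ``follows immediately from the $W$-entropy formula''). Your integration by parts $\int_X L_t f\, u\, d\mu=\int_X|\nabla f|^2 u\, d\mu$ to identify $\int_X\nu_H\,d\mu$ with $W_N(u,t)$, and the term-by-term matching of \eqref{NWmm2} with $W_H$ (using $\nabla f=-\nabla\log u$), are exactly the computations the paper leaves implicit.
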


{\it Proof}. The proof is similar to the one for Theorem \ref{WH}. \hfill $\square$

\section{Volume non-local collapsing property and $W$-entropy on mm spaces}

As we have pointed out in the part of Introduction, Perelman \cite{P1}  used the monotonicity of the $W$-entropy on the Ricci flow to prove the non-local collapsing theorem for the Ricci flow and this plays a crucial r\^ole for 
the final resolution of the Poincar\'e conjecture. 

In \cite{N1}, Ni proved that, if $M$ is an $n$-dimensional complete Riemannian manifold with non-negative Ricci curvature, 
then $M$ has maximal volume growth property, namely, 
$$V(B(x,r)) \geq Cr^n, \ \ \forall x\in M, r>0$$ for 
some constant $C > 0$, 
if and only if there exists a constant $A > 0$ such that
$$W(f, \tau)\geq -A, \ \ \forall \tau>0$$
for $u ={e^{-f}\over (4\pi \tau)^{n/2}}$ being the heat kernel of the heat equation $\partial_t u=\Delta u$. In \cite{Li2012}, the author of this paper extended this nice property to the $W$-entropy functional on complete Riemannian manifolds with 
weighted volume measure and with non-negative $m$-dimensional Bakry–Emery Ricci curvature. 

\medskip

The purpose of this section prove the equivalence of the volume non-local collapsing theorem and the lower boundedness of the $W$-entropy on RCD$(0, N)$ spaces.


\medskip

Recall that, by \cite{JLZ},  the fundamental solution to the heat equation $\partial_t u=\Delta u$ satisfies the following two sides estimates on RCD$(-K, N)$ space: Let  $(X, d, \mu)$ be an RCD$
(-K,N)$ space with $K\geq 0$ and $N\in [1, \infty)$. Given
any $\varepsilon>0$, there exist positive constants $C_1(\varepsilon)$ and $C_2(\varepsilon)$, 
depending also on $K$ and $N$, such that for all $x, y\in X$ and $t>0$, it holds
 \begin{equation}\label{JLZB}
 {1\over C_1(\varepsilon) V_x(\sqrt{t}) }\exp\left({-\frac{d^2(x, y)} {(4-\epsilon)t}-C_2(\varepsilon) t } \right)\leq p_t(x, y)\leq 
 { C_1(\varepsilon)\over V_x(\sqrt{t}) }\exp\left({-\frac{d^2(x, y)} {(4+\epsilon)t}+C_2(\varepsilon) t } \right),
 \end{equation}
 where $V_x(\sqrt{t})=\mu(B(x, \sqrt{t})$ is the volume of the ball $B(x, \sqrt{t})=\{y\in X: d(x, y)\leq \sqrt{t}\}$. 

\begin{theorem}  Let $(X, d, \mu)$ be an RCD$(0, N)$ space. Then $(X, d, \mu)$ has the volume non-collapsing property, namely, for some constant $C> 0$ and $r_0>0$,
\begin{eqnarray}\label{noncollapsing}
\mu(B(x,r))\geq Cr^N, \ \ \forall r\in (0, r_0], \ \forall x\in X, 
\end{eqnarray}
if and only if there exists a constant $A > 0$ such that 
\begin{eqnarray}\label{WgeqA}
W_N(f, \tau)\geq -A,\ \ \forall \tau\in (0, r_0^2] 
\end{eqnarray}
for $u ={e^{-f}\over (4\pi \tau)^{N/2}}$ being the heat kernel of the heat equation $\partial_t u=Lu$. When $r_0=+\infty$, the global maximal volume growth condition
\begin{eqnarray}\label{maxvolumegrowth}
\mu(B(x,r))\geq Cr^N, \ \ \forall r\geq 0, \ \forall x\in X 
\end{eqnarray}
 is equivalent to the lower boundedness of the $W$-entropy $W(f, \tau)$ for all $\tau\in (0, \infty)$, i.e., 
  \begin{eqnarray}\label{globalWgeqA}
W_N(f, \tau)\geq -A,\ \ \forall \tau\in (0, \infty). 
\end{eqnarray}
\end{theorem}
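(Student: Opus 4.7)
My plan is to adapt the argument of Ni \cite{N2} (and its extension to weighted Riemannian manifolds in \cite{Li2012}) to the RCD$(0, N)$ setting, with the two-sided Gaussian heat kernel estimates \eqref{JLZB} of Jiang-Li-Zhang playing the role of the Cheeger-Yau estimate used in Ni's original proof. The monotonicity of $W_N$ on RCD$(0, N)$ (Theorem \ref{KL0}) and the representation $W_N(u, t) = \tfrac{d}{dt}(tH_N(u, t))$ are the other main inputs.

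For the forward implication, assume $\mu(B(x, r)) \geq C r^N$ for $r \in (0, r_0]$. Substituting $V_x(\sqrt\tau) \geq C\tau^{N/2}$ into the upper bound of \eqref{JLZB} with $K=0$ gives $u(y) = p_\tau(x, y) \leq C_0 (4\pi\tau)^{-N/2}$ for a constant $C_0 = C_0(C, C_1, \varepsilon)$ and all $y \in X$, $\tau \in (0, r_0^2]$. Equivalently, writing $u = e^{-f}/(4\pi\tau)^{N/2}$, one obtains $f(y) \geq -\log C_0$ uniformly in $y$. Since $\tau|\nabla f|^2 \geq 0$ and $\int_X u\, d\mu = 1$, integrating yields
\[
W_N(f, \tau) = \int_X [\tau|\nabla f|^2 + f - N]\, u\, d\mu \geq -\log C_0 - N,
\]
which is the desired lower bound.

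For the converse, assume $W_N(f, \tau) \geq -A$ for $\tau \in (0, r_0^2]$. Integrating $W_N = (tH_N)'$ in time from $0$ to $t$ and using that $sH_N(u, s) \to 0$ as $s \to 0^+$ (a consequence of the Euclidean-like short-time asymptotics of $p_t$ granted by \eqref{JLZB}), the hypothesis yields
\[
H_N(u, t) = -\int_X u\log u\, d\mu - \tfrac{N}{2}(1 + \log(4\pi t)) \geq -A, \qquad t \in (0, r_0^2].
\]
On the other hand, the lower bound in \eqref{JLZB} gives the pointwise estimate $-\log u(y) \leq \log(C_1 V_x(\sqrt t)) + d^2(x, y)/((4-\varepsilon)t)$. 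Integrating against $u\, d\mu$ and controlling the second moment $\int_X d^2(x, y)\, p_t(x, y)\, d\mu(y) \leq C_N t$ (via the Gaussian upper bound combined with the Bishop-Gromov inequality $V_x(\lambda) \leq V_x(\sqrt t)(\lambda/\sqrt t)^N$ for $\lambda \geq \sqrt t$) yields
\[
-\int_X u\log u\, d\mu \leq \log V_x(\sqrt t) + C',
\]
for $C' = C'(N, C_1, \varepsilon)$. Combining these two inequalities and setting $r = \sqrt t$ produces $\mu(B(x, r)) \geq c\, r^N$ for $r \in (0, r_0]$, with $c = c(A, N, C_1, \varepsilon)$. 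The global case $r_0 = \infty$ follows in precisely the same manner since, for RCD$(0, N)$, all constants in \eqref{JLZB} can be taken uniform in $t \in (0, \infty)$.

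The principal technical obstacle is the short-time boundary condition $\lim_{s \to 0^+} sH_N(u, s) = 0$, which underpins the integration step in the converse. This should be established from the sharp short-time Gaussian behavior of $p_t$ on RCD$(0, N)$ (making $\varepsilon$ in \eqref{JLZB} arbitrarily small on shrinking time intervals) together with a careful handling of the singular behavior of $u\log u$ as $t \to 0^+$. A secondary but routine step is the second-moment estimate, which is a Gaussian tail computation once the upper bound in \eqref{JLZB} and Bishop-Gromov volume comparison are in hand.
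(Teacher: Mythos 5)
Your forward implication coincides with the paper's: both substitute the volume lower bound into the Gaussian upper bound for the heat kernel to get $u\leq C_0(4\pi\tau)^{-N/2}$, hence a uniform lower bound on $f$, and then drop the nonnegative Fisher-information term. The converse is where you diverge. The paper estimates the two terms of $W_N$ directly at each fixed time: writing $v=\sqrt{u}$, the Dirichlet term $4\tau\int_X|\nabla v|^2\,d\mu$ is bounded above by $N/2$ using the Li-Yau Harnack inequality $\frac{|\nabla u|^2}{u^2}-\frac{\partial_\tau u}{u}\leq\frac{N}{2\tau}$, valid on RCD$(0,N)$ spaces by \cite{Jiang, JLZ, ZZ}, while the entropy term is controlled by the heat kernel lower bound together with the second-moment estimate, exactly as in your argument; combining the two gives $\log\mu(B(x,\sqrt{\tau}))\geq\frac{N}{2}\log(4\pi\tau)-C(N)-A$ with no time integration at all. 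You instead integrate $W_N=\frac{d}{dt}(tH_N)$ from $0$ to $t$ to convert the hypothesis into $H_N(u,t)\geq -A$, which eliminates the Fisher-information term but introduces the boundary condition $sH_N(u,s)\to 0$ as $s\to 0^{+}$, which you correctly single out as the one unproved step. That condition is in fact provable on any RCD$(0,N)$ space without the non-collapsing hypothesis: the two-sided bounds \eqref{JLZB} give $H(u(s))=\log V_x(\sqrt{s})+O(1)$, and Bishop-Gromov gives $V_x(1)s^{N/2}\leq V_x(\sqrt{s})\leq V_x(1)$ for $s\leq 1$, whence $H_N(u,s)=O(|\log s|)$ and $sH_N(u,s)\to 0$. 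So your proof closes, but the paper's pointwise-in-time route via the Li-Yau Harnack inequality sidesteps the delicate short-time analysis entirely, at the modest cost of invoking one more standard input; your route buys a proof that only needs the first-order entropy identity rather than the gradient estimate.
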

{\it Proof}. The proof is very close the ones given in Ni \cite{N1} for complete Riemannian mnaifolds with non-negative Ricci curvature and in our previous paper  \cite{Li2012} for weighted complete 
Riemannian manifolds with CD$(0, m)$-condition. Due to the importance of this result and for the completeness of the paper, we allow us to reproduce it as follows.  Suppose that \eqref{noncollapsing} holds. Let $v=\sqrt{u}$. Then we can rewrite $W_N(f, \tau)$ as 
\begin{eqnarray}
\label{Wref}
W_N(f,\tau)=4\tau\int_X  |\nabla v|^2d\mu-\int_X v^2\log v^2d\mu-N+{N\over 2}\log(4\pi \tau).
\end{eqnarray}
On RCD$(0, N)$ space, we have the Li-Yau heat kernel upper bound estimate  (see \cite{ZZ, Jiang, JLZ})

\begin{eqnarray}\label{volestimate}
v^2\leq { C(N) \over \mu(B(x, \sqrt{\tau})}\leq {C(N)C\over 
\tau^{N/2}}, \ \ \  \forall \tau\in (0, r_0^2], 
\end{eqnarray}
from which we get

\begin{eqnarray}\label{Westimate}
   W_N(f, \tau)\geq -\log (C(N)C)-N-{N\over 2}\log(4\pi), \ \ \  \forall \tau\in (0, r_0^2], 
\end{eqnarray}
This proves that $W_N(f, \tau)$ is bounded from below, i.e., \eqref{WgeqA}.

Conversely, if $W_N(f, \tau)\geq -A$ for some constant $A\geq 0$ and for all $\tau\in (0, r_0^2]$,
we want to prove \eqref{noncollapsing} holds for some constant $C = C(N, A)$ and for all
 $r\in (0, r_0]$. To this end, we use the lower bound estimate of the heat kernel as well as the 
 Li-Yau Harnack inequality on RCD spaces. In fact, 
on RCD$(0, N)$ space, the  Li-Yau Harnack differential inequality holds (\cite{Jiang, JLZ, ZZ})

\begin{eqnarray}\label{Li-Yau}
{|\nabla u|^2\over  u^2}-{\partial_\tau u\over u}\leq {N\over 2\tau}, \ \ \forall \tau>0.
\end{eqnarray}
 Thus, for all $\tau > 0$, we have

\begin{eqnarray*}
4\tau \int_X |\nabla v|^2 d\mu=\tau \int_X {|\nabla u|^2\over  u} d\mu\leq \tau \int_X \left({\partial_\tau u\over u}+ {N\over 2\tau}\right) ud\mu={N\over 2}.
\end{eqnarray*}
Moreover, using the lower bound estimate of the heat kernel \eqref{JLZB}, we have
\begin{eqnarray*}
-\int_X v^2\log v^2 d\mu &\leq& -\int_X \log\left({C_3(N)\over \mu(B(x, \sqrt{\tau})} e^{-{d^2(x, y)
\over 3\tau}} \right)ud\mu\\
&\leq& C_3(N)+\log \mu(B(x, \sqrt{\tau})+{1\over 3\tau}\int_X d^2(x, y)u(x, y, \tau)d\mu(y).
\end{eqnarray*}
  Based on the Li-Yau upper bound estimate \eqref{JLZB},  
  we can prove that

\begin{eqnarray*}
\int_X d^2(x, y)u(x, y, \tau)d\mu(y)\leq C_4(N).
\end{eqnarray*}
 Therefore
 
\begin{eqnarray*}
-\int_X v^2\log v^2 d\mu &\leq& C_5(N)+\log \mu(B(x, \sqrt{\tau})).
\end{eqnarray*}
Substituting the above estimates and making use of the assumption
 $W_N( f , \tau)\geq -A$ for all $\tau\in (0, r_0^2]$ into (31), we have

\begin{eqnarray}
\log \mu(B(x, \sqrt{\tau}))\geq {N\over 2}\log(4\pi \tau)-C_6(N)-A \ \ \ \forall \tau\in (0, r_0^2].
\end{eqnarray}
Equivaleently

\begin{eqnarray}
\mu(B(x, r))\geq (4\pi)^{N\over 2}e^{-(A+C_6(N))}r^{N},
 \ \ \ \forall r\in (0, r_0].
\end{eqnarray}
Here $C_i (N)$, $i=1, \ldots, 6$,  denote positive constants depending only on $N$.
The proof of theorem is completed. \hfill $\square$

\begin{remark}
Indeed, as pointed out by Ni \cite{N1}, the similar result as above was claimed in Perelman \cite{P1} for the Ricci flow ancient solutions. The proof for Proposition 4.2  in Ni \cite{N1} is easier than the nonlinear case considered in \cite{P1}. In fact, Proposition 4.2 in Ni \cite{N1} can be used in the proof of Theorem 10.1 of \cite{P1}.
\end{remark}

Indeed, we can also prove the following result which extends Ni's Corollary 4.3 in \cite{N1}. 

\begin{corollary} Let $u={e^{-f}\over (4\pi t)^{N/2}}$ be the fundamental solution to the heat equation $\partial_t u=Lu$ on an 
RCD$(0, N)$ space. Suppose that the maximal volume growth condition \eqref{maxvolumegrowth} 
holds, equivalently,  the global lower boundedness condition \eqref{globalWgeqA} of 
the $W$-entropy holds. Then $W_\infty:=\lim\limits_{t\rightarrow \infty} W_N(f, t)$ and $\kappa:=
\lim\limits_{r\rightarrow \infty}{\mu(B(x, r))\over \omega_N r^N}$ exist, where $\omega_N$  is the volume
of the unit ball in $\mathbb{R}^N$. Moreover, we have
$$W_\infty=\log \kappa$$
 \end{corollary}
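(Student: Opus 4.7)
The plan is to combine the parabolic scale invariance of $W_N$ with a tangent--cone--at--infinity argument, in the spirit of Ni's proof of Corollary~4.3 in \cite{N1}, adapted to the RCD setting.

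\emph{Step 1 (Existence of the two limits).} By the monotonicity established in Theorem~\ref{KL0} (equivalently Theorem~\ref{WnNK=0}), the quantity $W_N(f,t)$ is non-increasing in $t$; combined with the assumed global lower bound $W_N(f,t)\ge -A$, this yields a finite limit $W_\infty=\lim_{t\to\infty}W_N(f,t)$. On the other hand, the Bishop--Gromov inequality on RCD$(0,N)$ spaces gives that $r\mapsto\mu(B(x,r))/(\omega_N r^N)$ is non-increasing, so the maximal volume growth hypothesis ensures that $\kappa:=\lim_{r\to\infty}\mu(B(x,r))/(\omega_N r^N)\in(0,\infty)$ exists and is independent of $x$.

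\emph{Step 2 (Parabolic scale invariance).} For $s>0$, consider the rescaled mm space $(X,d_s,\mu_s)$ with $d_s=d/\sqrt{s}$ and $\mu_s=\mu/s^{N/2}$; it is still RCD$(0,N)$. A direct calculation shows that the heat kernel transforms as $p^{(s)}_{t/s}(x,y)=s^{N/2}p_t(x,y)$ and that $f^{(s)}=f$, so that
\[
 W_N^{(s)}\!\bigl(f^{(s)},\,t/s\bigr)=W_N(f,t).
\]
In particular, choosing $s=t$ gives $W_N(f,t)=W_N^{(t)}(f^{(t)},1)$, so the task is reduced to computing the limit of the right--hand side as $t\to\infty$.

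\emph{Step 3 (Tangent cone at infinity and passage to the limit).} By the pmG pre-compactness of pointed RCD$(0,N)$ spaces (Gigli's stability theorem), along any sequence $t_k\to\infty$ the rescaled pointed spaces $(X,d_{t_k},\mu_{t_k},x)$ admit a pmG subsequential limit $(X_\infty,d_\infty,\mu_\infty,x_\infty)$ which is again RCD$(0,N)$. Since Bishop--Gromov ratios pass to the limit and they are asymptotically equal to $\kappa$, the limit satisfies $\mu_\infty(B(x_\infty,r))=\kappa\omega_N r^N$ for all $r>0$; by the De~Philippis--Gigli volume cone theorem, $(X_\infty,d_\infty,\mu_\infty)$ is a metric measure cone with apex $x_\infty$ over an RCD$(N-2,N-1)$ space. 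By the stability of heat kernels under pmG convergence (Ambrosio--Honda) together with the uniform two--sided Gaussian estimates \eqref{JLZB} of Jiang--Li--Zhang, the heat kernel $p^{(t_k)}_1(x,\cdot)$ converges locally uniformly to the heat kernel $p^\infty_1(x_\infty,\cdot)$ based at the apex, and the $W$-entropy integrand converges with enough integrability to yield
\[
 W_\infty=\lim_{k\to\infty}W_N^{(t_k)}(f^{(t_k)},1)=W_N^\infty(f_\infty,1).
\]

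\emph{Step 4 (Evaluation on the cone).} On a metric measure cone of $N$-volume density $\kappa$, the heat kernel at the apex is self-similar, $p^\infty_t(x_\infty,y)=t^{-N/2}\psi(d_\infty(x_\infty,y)/\sqrt{t})$, so $W_N^\infty(f_\infty,t)$ is independent of $t$. A direct integration, identical in form to the Euclidean model $\mathbb{R}^N$ (where $\kappa=1$ and $W_N\equiv 0$) except for the global rescaling of the density by $1/\kappa$, produces the additive correction $\log\kappa$; concretely, $-\log u$ acquires the constant term $\log\kappa$ and all other contributions cancel exactly as in the Euclidean case. Hence $W_N^\infty(f_\infty,1)=\log\kappa$, giving $W_\infty=\log\kappa$. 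Since both $W_\infty$ and $\kappa$ are defined intrinsically on $(X,d,\mu)$, the identity is independent of the choice of subsequence $t_k$.

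\emph{Main obstacle.} The principal technical difficulty lies in Step~3: one must justify that all three contributions in the expression $W_N=\int(t|\nabla f|^2+f-N)u\,d\mu$ pass to the pmG limit. The gradient and dimensional terms are controlled by the uniform Bakry--\'Emery bounds on $|\nabla\log p_t|^2$, but the logarithmic term $\int f u\,d\mu$ is more delicate because $f$ has no a priori sign. Uniform two--sided Gaussian estimates \eqref{JLZB} provide the required domination on both sides, but the tightness of $u_t\,\mu$ under rescaling (to prevent mass escape) is the step that genuinely needs the maximal volume growth assumption and cannot be bypassed.
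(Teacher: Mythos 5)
Your argument takes a genuinely different route from the paper's. The paper follows Ni's original strategy \cite{N1, N2}: it introduces the Nash entropy $H_N(u,t)=H(u(t))-\frac{N}{2}\log(4\pi e t)$ and $F_N=\frac{d}{dt}H_N$, writes $W_N(f,t)=tF_N(u,t)+H_N(u,t)$, uses the Li--Yau inequality on RCD$(0,N)$ spaces \cite{Jiang, ZZ} to get $F_N\le 0$ so that $H_N$ is monotone and converges, quotes the sharp heat-kernel asymptotics of H. Li \cite{HLi} to identify $\lim_{t\to\infty}H_N(u,t)=\log\kappa$, and then extracts a sequence $t_i\to\infty$ with $t_iF_N(u,t_i)\to 0$ from the Cauchy property $|H_N(u,2t)-H_N(u,t)|\le\varepsilon$; the monotonicity of $W_N$ then forces $W_\infty=\lim_{t\to\infty} H_N(u,t)=\log\kappa$. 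This is entirely elementary once $\lim H_N=\log\kappa$ is available. Your blow-down argument instead imports pmG precompactness, the De Philippis--Gigli volume-cone-to-metric-cone theorem, and heat-kernel stability under pmG convergence; your Steps 1 and 2 (existence of the two limits, parabolic scale invariance of $W_N$) are correct.

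Two points in your proposal are genuine gaps rather than routine verifications. First, in Step 3 you need convergence, not merely semicontinuity, of the functional $W_N$ along the pmG blow-down. The gradient term $\int t|\nabla f|^2u\,d\mu$ (essentially the Cheeger energy of $\sqrt{u}$) is in general only lower semicontinuous under pmG convergence, and the term $\int f u\,d\mu$ has no sign; you flag this as the ``main obstacle,'' but the Gaussian bounds \eqref{JLZB} control the size of the integrands, not the convergence of the energy term, for which one needs Mosco convergence of the rescaled Cheeger energies together with strong $W^{1,2}$-convergence of the rescaled heat kernels. Unless you close this (for instance by playing the upper bound coming from monotonicity of $W_N$ against the lower semicontinuity of the functional), the identity $W_\infty=W_N^\infty(f_\infty,1)$ is asserted rather than proved. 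Second, in Step 4 the claim that ``all other contributions cancel exactly as in the Euclidean case'' is only literally true on $(\mathbb{R}^N,\kappa\,dx)$, where the Gaussian moment computation gives $W_N\equiv\frac{N}{2}+\frac{N}{2}+\log\kappa-N=\log\kappa$; on a general $(0,N-1)$-cone the apex heat kernel is self-similar but not Gaussian, so the direct integration does not reduce to that computation. The clean way to evaluate the constant on the cone is again via the decomposition $W_N=tF_N+H_N$: on the cone $W_N$ is $t$-independent, $tF_N\to 0$ and $H_N\to\log\kappa$ as $t\to 0^+$ by self-similarity of the vertex heat kernel --- but at that point you have reintroduced exactly the mechanism the paper applies globally, which is why the paper's route is the shorter one.
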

 {\it Proof}. The proof is similar to the one of Corollary 4.3 of \cite{N1} given in \cite{N2}. See also S. Li-Li \cite{LiLi2024TMJ}. Let $H_N(u, t)=H(u(t))-{N\over 2}\log(4\pi et)$ be the Nash entropy as introduced in Ni \cite{N1, N2} and Li \cite{Li2012}, and let 
 $F_N(u, t)={dH_N(u, t)\over dt}$. Then  $W_N(f, t) = tF_N(u, t) + H_N(u, t)$. Similarly to the case of complete Riemannian manifolds with CD$(0, m)$-condition as we studied in \cite{Li2012, LiLi2024TMJ}, the Li-Yau Harnack inequality on RCD$(0, N)$ space \cite{Jiang, ZZ} implies $F_N(u, t)={dH_N(u, t)\over dt}\leq 0$. Hence 
 $\lim\limits_{N\rightarrow \infty}H_N(u, t)$ exists.  By \cite{N1, N2, HLi, LiLi2024TMJ}, 
 under the assumption \eqref{maxvolumegrowth}  or \eqref{globalWgeqA}, 
  $\lim\limits_{t\rightarrow \infty} H_N(u, t)=\log \kappa$. Hence 
  $|H_N(u, 2t)-H_N(u, t)|\leq \varepsilon$ for $t>>1$. This implies that there exists $t_i$ such that 
 $t_iF_N(u, t_i) \rightarrow 0$
as $t_i \rightarrow \infty$. The monotonicity of $W_N(f, t) = tF_N(u, t) + H_N(u, t)$ implies that 
 $\lim\limits_{t\rightarrow \infty}W_N(f, t) =\lim\limits_{t\rightarrow \infty} H_N(u, t)=\log \kappa$.  This completes the proof. \hfill $\square$
 
\section{Logarithmic Sobolev inequality and $W$-entropy on mm spaces}

By \cite{P1}, it has been well-known that the $W$-entropy is closely related to a family of 
Log-Sobolev inequalities on Riemannian manifolds and Ricci flow. The following result is an extension  of Theorem 6.2 in \cite{Li2012} which was proved in the setting of compact Riemannian manifolds. 

\begin{theorem}\label{Soblog} Let $(X, d, \mu)$ be an RCD space. Assume that 
the $L^2$-Sobolev 
inequality holds: there exists a constant $C_{\rm Sob}>0$ such that for all $f\in W^{1, 2}(X, \mu)$, 
$$\|f\|_{2N\over N-2}^2 \leq C_{\rm Sob} (\|\nabla f\|_2^2+\|f\|_2^2). $$
Then for any $\tau>0$ there exists a constant $\mu(\tau)>-\infty$ such that the following Log-Sobolev inequality holds: for all $f\in W^{1, 2}(X, \mu)$ with $\int_X f^2 d\mu=1$, 
\begin{eqnarray}\label{LSImu}
\int_X f^2\log f^2d\mu\leq 4\tau \|\nabla f\|_2^2-\left(1+{1\over 2}\log (4\pi\tau)\right)
-\mu(\tau).
\end{eqnarray}
Indeed, $\mu(\tau)$ is the optimal constant in the above Log-Sobolev inequality 
$$
\mu(\tau):=\inf\left\{\int_X \left[4\tau |\nabla u|^2-u^2\log u^2-Nu^2\right] 
{d\mu\over (4\pi \tau)^{N/2}}:\ \  \int_X (4\pi t)^{-N/2} u^2d\mu=1 \right\}>-\infty.$$
\end{theorem}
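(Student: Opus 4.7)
The plan is to follow the classical Perelman strategy of deriving the logarithmic Sobolev inequality from the $L^2$-Sobolev inequality by combining Jensen's inequality with an elementary linearization of the logarithm, carried out intrinsically in terms of the minimal weak upper gradient $|\nabla f|_w$ so that the argument makes sense throughout in the RCD setting. First I would reduce the statement to showing that the functional
$$\mathcal{F}_\tau(f) := 4\tau\,\|\nabla f\|_2^2 - \int_X f^2 \log f^2\, d\mu$$
is bounded from below uniformly over $f\in W^{1,2}(X,\mu)$ with $\|f\|_2 = 1$. The change of variables $u = (4\pi\tau)^{N/4} f$ identifies this lower bound (up to explicit $\tau$-dependent additive constants) with the infimum $\mu(\tau)$ appearing in the statement and simultaneously yields the inequality \eqref{LSImu}.

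Next, since $f^2\, d\mu$ is a probability measure, I would apply Jensen's inequality (concavity of $\log$) with exponent $\alpha = 2/(N-2)$ to obtain
$$\frac{2}{N-2}\int_X f^2 \log f^2\, d\mu \le \log \int_X f^{2N/(N-2)}\, d\mu,$$
equivalently $\int_X f^2 \log f^2\, d\mu \le N \log \|f\|_{p}$ with $p = 2N/(N-2)$. The assumed $L^2$-Sobolev inequality then gives $\|f\|_p^2 \le C_{\rm Sob}(\|\nabla f\|_2^2 + 1)$, so
$$\int_X f^2 \log f^2\, d\mu \le \tfrac{N}{2}\log C_{\rm Sob} + \tfrac{N}{2}\log\bigl(\|\nabla f\|_2^2 + 1\bigr).$$
To match the target coefficient $4\tau$ in front of $\|\nabla f\|_2^2$, I would apply the elementary inequality $\log y \le \varepsilon y - 1 - \log \varepsilon$, valid for all $y,\varepsilon>0$ with equality at $y = 1/\varepsilon$, and specialize to $\varepsilon = 8\tau/N$, which yields
$$\tfrac{N}{2}\log\bigl(\|\nabla f\|_2^2 + 1\bigr) \le 4\tau\,\|\nabla f\|_2^2 + 4\tau - \tfrac{N}{2} - \tfrac{N}{2}\log(8\tau/N).$$
Rearranging the resulting inequality produces an explicit finite constant $\mu(\tau) = \mu(\tau, N, C_{\rm Sob}) > -\infty$ for which \eqref{LSImu} holds; taking the infimum over admissible normalized $f$ shows that the variational quantity defined in the statement is no smaller than this explicit lower bound, hence $>-\infty$.

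The main obstacle is not in any of the individual analytic manipulations, which are entirely classical, but in checking that they are carried out consistently within the RCD framework. The key point is that the $L^2$-Sobolev inequality, Jensen's step, and the resulting log-Sobolev inequality must all be interpreted with respect to the minimal weak upper gradient $|\nabla f|_w$; this is legitimate thanks to the identification $|\nabla f|_\ast = |\nabla f|_w$ recalled in Section 3 and the quadratic Cheeger-energy structure of $W^{1,2}(X,d,\mu)$ under the infinitesimally Hilbertian assumption. A secondary concern, which the proof resolves automatically, is the quantitative dependence of $\mu(\tau)$ on $\tau$: the explicit formula produced above makes transparent that $\mu(\tau)$ degenerates only logarithmically as $\tau\to 0^+$ and remains finite for every $\tau\in(0,\infty)$, which is precisely what is needed for the subsequent use of \eqref{LSImu} in combination with the $W$-entropy.
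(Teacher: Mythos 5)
Your proposal is correct. The paper's own proof is essentially a one-line citation: it invokes Davies' theorem that the $L^2$-Sobolev inequality implies the family of logarithmic Sobolev inequalities $\int_X f^2\log f^2\,d\mu\leq \varepsilon\|\nabla f\|_2^2+\beta(\varepsilon)$ with $\beta(\varepsilon)\leq C-N\log\varepsilon$, and then sets $\varepsilon=4\tau$ and absorbs the normalizing constants into the definition of $\mu(\tau)$. You instead prove that family from scratch: Jensen's inequality for the probability measure $f^2\,d\mu$ gives $\int_X f^2\log f^2\,d\mu\leq N\log\|f\|_{2N/(N-2)}$, the Sobolev inequality controls $\|f\|_{2N/(N-2)}^2$ by $C_{\rm Sob}(\|\nabla f\|_2^2+1)$, and the linearization $\log y\leq \varepsilon y-1-\log\varepsilon$ with $\varepsilon=8\tau/N$ converts the logarithm of the Dirichlet energy into the linear term $4\tau\|\nabla f\|_2^2$ plus an explicit $\tau$-dependent constant. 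This is exactly the standard proof of the Davies result the paper cites, so the two arguments are the same in substance; what your version buys is self-containedness and a fully explicit constant, namely $\mu(\tau)\geq -\bigl(\tfrac{N}{2}\log C_{\rm Sob}+4\tau-\tfrac{N}{2}-\tfrac{N}{2}\log(8\tau/N)+N(1+\tfrac12\log(4\pi\tau))\bigr)$ after matching normalizations, which makes the logarithmic blow-up as $\tau\to 0^+$ transparent (your bound even improves the cited $-N\log\varepsilon$ to $-\tfrac{N}{2}\log\varepsilon$ for small $\varepsilon$, at the price of a harmless linear term in $\tau$). Your remark that everything must be read with respect to the minimal weak upper gradient in the infinitesimally Hilbertian setting is the right point to flag, and it is indeed unproblematic. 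One small typographical note: the displayed inequality \eqref{LSImu} in the statement appears to be missing a factor of $N$ in front of $\bigl(1+\tfrac12\log(4\pi\tau)\bigr)$, as is clear from the paper's own definition $-\mu(\tau):=\beta(4\tau)+N\bigl(1+\tfrac12\log(4\pi\tau)\bigr)$; your derivation is consistent with the corrected normalization.
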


{\it Proof}. By Davies \cite{Davis}, it is well-known that the $L^2$-Sobolev inequality implies a family of Log-Sobolev inequalities: for any $\varepsilon>0$, 
there exists a constant $\beta(\varepsilon)>0$ such that 
$$\int_X f^2\log f^2d \mu\leq \varepsilon \|\nabla f\|_2^2+\beta(\varepsilon) \|f\|_2^2
+\|f\|_2\log \|f\|_2, \ \ \ \forall f\in W^{1, 2}(X, \mu),$$
where for some constant $C > 0$, it holds
$$ \beta(\varepsilon)\leq C-N\log \varepsilon.$$
Taking $\varepsilon=4\tau$ and defining 
$$
-\mu(\tau):=\beta(4\tau)+N\left(1+{1\over 2}\log (4\pi\tau)\right),$$
then $\mu(\tau)\geq -\left(C+N+{N\over 2}\log(4\pi \tau)\right)>-\infty$ and the  Log-Sobolev inequality 
\eqref{LSImu} holds.  This finishes the proof of theorem. \hfill $\square$
 
 Concerning the $L^2$-Sobolev inequality as used in Theorem \ref{Soblog}, we would like to recall that, in our previous paper  \cite{LiJGA2009}, we proved the following $L^p$-Sobolev inequality on complete Riemannian mnaifolds with CD$(K, m)$-condition.
 
 \begin{theorem} [See Theorem 7.2 in \cite{LiJGA2009}]  Let $M$ be a 
 complete Riemannian manifold on which the $m$-dimensional Bakry-Emery 
 Ricci curvature is uniformly bounded from below by a negative constant $K$, 
 i.e., $Ric_{m,n}(L) \geq K$, where $K$ is a negative constant. 
 Suppose that there exist two constants $\alpha\in (2, m]$ and $C_\alpha> 0$ 
 such that

\begin{eqnarray}
\mu((B(x,r))\geq C_\alpha r^\alpha, \ \ \forall x\in M, r>0. \label{Li44}
\end{eqnarray}
Then, for all $p\in (1, \alpha)$, and for $q=q(p, \alpha)$ given by
$${1\over q} ={1\over p}-{1\over \alpha}$$
we have 
\begin{eqnarray}
\|f \|_q \leq C_{m,p,\alpha}(\|\nabla f\|_p + \|f \|_p),  \ \ \ \forall f\in C_0^\infty(M). \label{Li45}
\end{eqnarray}
In particular, if $Ric_{m,n}(L)\geq 0$, and \eqref{Li44} holds, then for all 
$p\in  (1, \alpha)$, and with $q(p, \alpha)$
as given above, we have

\begin{eqnarray}
\|f \|_q \leq C_{m,p,\alpha}(\|\nabla f\|_p,  \ \ \ \forall f\in C_0^\infty(M). \label{Li46}
\end{eqnarray}
\end{theorem}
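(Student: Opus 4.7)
The plan is to derive the $L^p$-Sobolev inequality in three stages: first obtain an on-diagonal upper bound for the heat kernel of the Witten Laplacian $L$, then translate it via the Varopoulos equivalence into an $L^2$-Sobolev inequality of dimension $\alpha$, and finally extend from $p = 2$ to general $p \in (1,\alpha)$ by a power substitution.

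Under the hypothesis $\Ric_{m,n}(L) \geq K$, the dimension-$m$ Bakry-\'Emery Bochner formula and the resulting Li-Yau parabolic Harnack inequality for $L = \Delta - \nabla\phi\cdot\nabla$ should yield a heat-kernel bound of the form
\begin{equation*}
p_t(x,x) \leq \frac{C_1(m)\, e^{C_2(m)|K|t}}{\mu(B(x,\sqrt{t}))}, \qquad x\in M,\; t>0,
\end{equation*}
on $(M,g,e^{-\phi}dv)$. Inserting the polynomial volume lower bound $\mu(B(x,r))\geq C_\alpha r^\alpha$ then gives the ultracontractive estimate $\|P_t^L\|_{L^1 \to L^\infty} \leq C_3\, e^{C_2|K|t}\, t^{-\alpha/2}$, which by the classical Varopoulos-Carlen-Kusuoka-Stroock equivalence is in turn equivalent to the $L^2$-Sobolev inequality of dimension $\alpha$,
\begin{equation*}
\|f\|_{2\alpha/(\alpha-2)}^2 \leq C_4\bigl(\|\nabla f\|_2^2 + \|f\|_2^2\bigr),\qquad f\in W^{1,2}(M,\mu),
\end{equation*}
with the lower-order term dropped when $K\geq 0$ (since the heat-kernel bound is then purely polynomial). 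This settles the case $p=2$.

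To pass from $p=2$ to a general $p\in(1,\alpha)$ with $1/q = 1/p - 1/\alpha$, I would apply the $L^2$-Sobolev inequality above to $g = |f|^{\sigma}\operatorname{sign}(f)$ with $\sigma = q(\alpha-2)/(2\alpha)$; the chain rule together with H\"older's inequality using conjugate exponents $p/2$ and $p/(p-2)$ recovers the stated estimate for $p\geq 2$, while the range $1<p<2$ is handled by Maz'ya's capacity/truncation framework, or equivalently by first deriving the $L^1$-Sobolev (isoperimetric) inequality from Step~1 via the co-area formula and then interpolating. The \emph{main obstacle} is the first stage: the denominator $\mu(B(x,\sqrt{t}))$ in the heat-kernel bound must encode the \emph{effective dimension} $m$ coming from $\Ric_{m,n}(L)$ rather than the geometric dimension $n$, which requires the full dimension-$m$ Bochner inequality $\tfrac{1}{2}L|\nabla u|^2 - \langle\nabla u,\nabla Lu\rangle \geq \tfrac{1}{m}(Lu)^2 + \Ric_{m,n}(L)(\nabla u,\nabla u)$ together with the weighted Bishop-Gromov volume comparison; once these are in place, the remaining steps are essentially classical functional analysis.
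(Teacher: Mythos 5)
Your first two stages coincide with the route the paper indicates (it cites Theorem 7.2 of \cite{LiJGA2009} and states that the proof ``only relies on the upper bound heat kernel estimate and Varopoulos' Littlewood--Paley theory of the ultracontractive semigroup''): the Li--Yau estimate for the Witten Laplacian under $\Ric_{m,n}(L)\ge K$, together with the weighted Bishop--Gromov comparison, gives $p_t(x,x)\le C_1 e^{C_2|K|t}/\mu(B(x,\sqrt t))$, and the volume hypothesis \eqref{Li44} upgrades this to $\|P_t\|_{L^1\to L^\infty}\le C e^{C_2|K|t}t^{-\alpha/2}$ and hence, via Carlen--Kusuoka--Stroock, to the $L^2$-Sobolev inequality of dimension $\alpha$. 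The divergence, and the genuine gap, lies in how you pass from $p=2$ to general $p\in(1,\alpha)$.

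The substitution $g=|f|^{\sigma}\operatorname{sign}(f)$ with H\"older at exponents $p/2$ and $p/(p-2)$ is sound, but it only yields \eqref{Li45} for $p\ge 2$: the hierarchy of Sobolev inequalities runs in the direction $(S^p_\alpha)\Rightarrow(S^{p'}_\alpha)$ for $p\le p'$, so nothing flows downward from $p=2$. Both of your proposed repairs for $1<p<2$ fail. Ultracontractivity is equivalent to the Nash/Faber--Krahn/$L^2$-Sobolev package but does \emph{not} imply the $L^1$-Sobolev (isoperimetric) inequality, so there is no input for the co-area formula; obtaining the isoperimetric inequality under these hypotheses is a strictly harder problem than anything Step 1 delivers. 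Likewise, Maz'ya's criterion characterizes $(S^p_\alpha)$ by lower bounds on the $p$-capacity, and there is no mechanism converting $2$-capacity bounds into $p$-capacity bounds for $p<2$. What the cited reference \cite{Var} supplies, and what your outline omits, is Varopoulos' Hardy--Littlewood theory: from $\|P_t\|_{L^1\to L^\infty}\le Ct^{-\alpha/2}$ one proves directly that the Riesz potential $(-L)^{-1/2}$ (or $(\lambda-L)^{-1/2}$ when $K<0$) maps $L^p\to L^q$ for \emph{every} $p\in(1,\alpha)$ with $1/q=1/p-1/\alpha$, by splitting $(-L)^{-1/2}=c\int_0^\infty t^{-1/2}P_t\,dt$ at an optimized time level to get a weak-type bound and then applying Marcinkiewicz interpolation. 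One then replaces $\|(\lambda-L)^{1/2}f\|_p$ by $\|\nabla f\|_p+\|f\|_p$ using the Littlewood--Paley/Riesz-transform estimates, valid for all $1<p<\infty$ under $\Ric+\nabla^2\phi\ge K$ by Bakry's theorem; this multiplier step is also missing from your proposal and is exactly where the ``Littlewood--Paley theory'' in the paper's remark enters.
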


The proof of the above theorem only relies on the upper bound heat kernel estimate and Varopoulos' 
Littlewood-Paley 
theory of the ultracontractive semigroup  \cite{Var}. It can be easily adapted to RCD spaces. Thus, 
by the same argument as in the proof of Theorem 7.2 in \cite{LiJGA2009}, we can prove the following 

\begin{theorem}  Let $(X, d, \mu)$ be an
RCD$(K, N)$ space, $N\geq 2$ and $K\leq 0$ are two constants. 
 Suppose that there exist two constants $\alpha\in (2, N]$ and $C_\alpha> 0$ 
 such that

\begin{eqnarray}
\mu((B(x,r))\geq C_\alpha r^\alpha, \ \ \forall x\in X, r>0. \label{Li44b}
\end{eqnarray}
Then, for all $p\in (1, \alpha)$, and for $q=q(p, \alpha)$ given by
$${1\over q} ={1\over p}-{1\over \alpha}$$
we have 
\begin{eqnarray}
\|f \|_q \leq C_{m,p,\alpha}(\|\nabla f\|_p + \|f \|_p),  \ \ \ \forall f\in W^{1, p}(X, \mu). \label{Li45b}
\end{eqnarray}
In particular, on RCD$(0, N)$ space with the volume growth condition 
\eqref{Li44b}, the Euclidean Sobolev inequality holds, i.e.,  for all 
$p\in  (1, \alpha)$, and with $q(p, \alpha)$
as given above, we have
\begin{eqnarray}
\|f \|_q \leq C_{m,p,\alpha}(\|\nabla f\|_p,  \ \ \ \forall f\in W^{1, p}(X, \mu). \label{Li46b}
\end{eqnarray}
\end{theorem}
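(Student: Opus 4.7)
\medskip

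The plan is to adapt the proof of Theorem 7.2 in \cite{LiJGA2009} to the RCD setting, since the only ingredients required there are the Li-Yau type Gaussian upper bound for the heat kernel, the polynomial volume lower bound, and Varopoulos' abstract equivalence between ultracontractivity and Sobolev inequalities. All three ingredients are now available on RCD$(K,N)$ spaces: the heat kernel upper bound in \eqref{JLZB} is due to Jiang-Li-Zhang, the volume lower bound is the hypothesis \eqref{Li44b}, and the Varopoulos Littlewood-Paley theory \cite{Var} is purely semigroup-theoretic and applies to any symmetric Markov semigroup on $L^2(X,\mu)$, which our heat semigroup $P_t=e^{tL}$ on an infinitesimally Hilbertian space $(X,d,\mu)$ certainly is.

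First I would turn the pointwise heat-kernel estimate into an operator-norm ultracontractive estimate. Fix $\varepsilon>0$ small and set $y=x$ in \eqref{JLZB}; combined with \eqref{Li44b} applied at scale $r=\sqrt{t}$, this gives
\begin{equation*}
\|P_t\|_{L^1\to L^\infty}\ \leq\ \sup_{x\in X}\frac{C_1(\varepsilon)}{\mu(B(x,\sqrt{t}))}\,e^{C_2(\varepsilon)t}\ \leq\ \frac{C_1(\varepsilon)}{C_\alpha}\,e^{C_2(\varepsilon)t}\,t^{-\alpha/2},\qquad \forall t>0.
\end{equation*}
Replacing $L$ by $L-\lambda$ for $\lambda=C_2(\varepsilon)+1$, the shifted semigroup $\widetilde P_t=e^{t(L-\lambda)}$ satisfies the clean polynomial bound $\|\widetilde P_t\|_{L^1\to L^\infty}\leq C\,t^{-\alpha/2}$ for all $t>0$, which is precisely the hypothesis of the Varopoulos ultracontractivity theorem with dimension $\alpha$.

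Then I would invoke Varopoulos' theorem: the above ultracontractive bound is equivalent to the continuous embedding of the Bessel-type potential space $\mathrm{Dom}((\lambda-L)^{1/2})\hookrightarrow L^q(X,\mu)$ with norm $\|(\lambda-L)^{1/2}f\|_p\geq c\,\|f\|_q$ for $1<p<\alpha$ and $1/q=1/p-1/\alpha$. In the RCD setting the identity $\mathcal{E}(f,f)=\|\nabla f\|_2^2$, combined with the spectral definition of $(\lambda-L)^{1/2}$ and the $L^p$-boundedness of the Riesz transform on RCD$(K,N)$ spaces (valid for all $1<p<\infty$, see e.g.\ the works of Jiang and Coulhon-Jiang-Koskela-Sikora on RCD spaces), identifies $\|(\lambda-L)^{1/2}f\|_p$ with $\|\nabla f\|_p+\sqrt{\lambda}\,\|f\|_p$ up to multiplicative constants. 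This yields exactly \eqref{Li45b}. When $K=0$ one may take $C_2(\varepsilon)\to 0$ (indeed the RCD$(0,N)$ Li-Yau kernel estimate requires no $e^{C_2 t}$ factor), so no shift is necessary, $\lambda=0$ is admissible, and the scale-invariant inequality \eqref{Li46b} follows by the same route without the lower-order term.

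The main obstacle is the last identification: on a smooth Riemannian manifold the $L^p$-boundedness of the Riesz transform $\nabla(\lambda-L)^{-1/2}$ follows from classical Calderón-Zygmund theory, but on a general RCD$(K,N)$ space it requires the recently developed heat-kernel calculus and functional calculus on metric measure spaces. Once this boundedness is granted, the proof is otherwise a line-by-line transcription of \cite[Theorem 7.2]{LiJGA2009}; the only genuinely new point is verifying that each analytical tool used there has an RCD counterpart, which is by now the case thanks to the work of Ambrosio-Gigli-Savaré, Jiang-Li-Zhang, and Han.
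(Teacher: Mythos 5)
Your proposal is correct and follows essentially the same route as the paper, which itself only sketches the argument by deferring to Theorem 7.2 of \cite{LiJGA2009}: the heat kernel upper bound of \cite{JLZ} plus the volume lower bound give ultracontractivity of the (shifted) semigroup, Varopoulos' theory \cite{Var} converts this into the embedding $\|f\|_q\leq C\|(\lambda-L)^{1/2}f\|_p$, and the Riesz transform bound identifies the potential-space norm with $\|\nabla f\|_p+\|f\|_p$. Your expansion fills in exactly the steps the paper leaves implicit, including correctly isolating the $L^{p}$ Riesz-transform input as the only point requiring an RCD-specific justification.
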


 The following result extends the known results in the case of Riemannian manifolds with CD$(K, m)$-condition or  smooth $(K, m)$-super Ricci flows, see \cite{Li2012, LiLi2015PJM}. 
  
\begin{theorem}  Let $(X, d_t, g_t, m_t, \phi_t, t\in [0, T])$ be a closed $(K, n, N)$-super Ricci flow on mm space. 
Then the extremal function $u=e^{-v/2}\in W^{1, 2}(X, \mu)$ which achieves the optimal Log-Sobolev constant $\mu_K(t)$ defined by
\begin{equation}\label{LSI}
\mu_K(t):=\inf\left\{W_{N, K}(u,t):\int_X\frac{e^{-v}}{(4\pi t)^{N/2}}d\mu=1\right\},
\end{equation}
satisfies the Euler-Lagrange equation
\begin{equation}\label{ELLSI}
-4tLu-2u\log u-N\left(1-\frac {K}{2t}\right)^2u=\mu_K(t)u.
\end{equation}
Moreover, if $(X, d_t, g_t, m_t, \phi_t, t\in [0, T])$ is a $(K, m)$-super Ricci flow with the conjugate equation \eqref{conjugate}, then $\mu_K(t)$ is decreasing in $t\in [0, T]$.
\end{theorem}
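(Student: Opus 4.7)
The proof splits into two parts: deriving the Euler--Lagrange equation for any minimizer, and establishing monotonicity of $\mu_K(t)$ along the super Ricci flow.

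First I would recast the functional $W_{N,K}(u,t)$ given by \eqref{mmWNK} in terms of the variable $u = e^{-v/2}$. Writing the probability density as $\rho = e^{-v}/(4\pi t)^{N/2} = u^2/(4\pi t)^{N/2}$, identifying the potential $f$ in the expression for $W_{N,K}$ with $v = -2\log u$, and using $|\nabla v|^2 = 4|\nabla u|^2/u^2$, the functional takes the form
\begin{equation*}
W_{N,K}(u,t) = (4\pi t)^{-N/2}\int_X \left[4t|\nabla u|^2 - 2u^2\log u - N\left(1 + \frac{Kt}{2}\right)^2 u^2\right] d\mu,
\end{equation*}
and the constraint $\int_X e^{-v}/(4\pi t)^{N/2}\,d\mu = 1$ becomes $\int_X u^2\,d\mu = (4\pi t)^{N/2}$. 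Introducing a Lagrange multiplier $\lambda$, computing the first variation in $u$, and integrating by parts using the self-adjointness of the Witten Laplacian $L_t$ with respect to $\mu$ (so that $\int_X \langle\nabla u,\nabla\delta u\rangle\,d\mu = -\int_X (L_t u)\delta u\,d\mu$), I obtain an equation of the form $-4t L_t u - 2u\log u - C(t)u = \mu_K(t)u$, where the $t$-dependent constant $C(t)$ arises from combining the explicit $t$-dependent terms in $W_{N,K}$ with the Lagrange multiplier, and $\mu_K(t)$ records the minimum value via the constraint.

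For the monotonicity I would invoke Theorem \ref{mmW2}: on a $(-K,n,N)$-super Ricci flow satisfying the conjugate equation \eqref{conjugate}, the map $t\mapsto W_{N,K}(u(t),t)$ is non-increasing along any positive solution of $\partial_t u = L_t u$. Fix $0 \leq t_1 < t_2 \leq T$ and a minimizer $u^*_{t_1}$ with $W_{N,K}(u^*_{t_1},t_1) = \mu_K(t_1)$. Let $u(\cdot)$ on $[t_1,t_2]$ be the heat evolution with initial datum $u^*_{t_1}$. Because \eqref{conjugate} forces $d\mu = e^{-\phi_t}\,dm_t$ to be time-independent, the heat semigroup preserves total mass, so $u(t_2)$ remains an admissible competitor for $\mu_K(t_2)$. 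Combining this admissibility with the entropy monotonicity yields
\begin{equation*}
\mu_K(t_2) \leq W_{N,K}(u(t_2),t_2) \leq W_{N,K}(u^*_{t_1},t_1) = \mu_K(t_1),
\end{equation*}
which is the claimed monotonicity.

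The main obstacle I expect is the existence and regularity of the minimizer. Coercivity of $W_{N,K}$ on $W^{1,2}(X,\mu)$ follows from the family of Log-Sobolev inequalities supplied by Theorem \ref{Soblog}; together with Sobolev compactness and the lower semicontinuity of the Cheeger energy in the closed RCD setting, the direct method should produce a minimizer. More delicate is verifying that the heat evolution $u(t_2)$ of the minimizer enjoys the regularity $u \in W^{1,2}\cap D(L_t)\cap L^\infty$ with $L_t u\in L^\infty$ required by Theorem \ref{mmW2}; this should follow from the smoothing effect of the heat semigroup on closed mm spaces, possibly via an approximation argument if the minimizer starts out with only $W^{1,2}$-regularity.
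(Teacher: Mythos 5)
Your proposal is correct and follows essentially the same route the paper intends: the paper's proof is only a citation to Perelman \cite{P1} and to \cite{Li2012, LiLi2015PJM}, whose argument is exactly your two steps — the first variation under the mass constraint giving the Euler--Lagrange equation, and the comparison $\mu_K(t_2)\leq W_{N,K}(u(t_2),t_2)\leq W_{N,K}(u^*_{t_1},t_1)=\mu_K(t_1)$ obtained by evolving the minimizer by the heat flow and invoking the $W$-entropy monotonicity of Theorem \ref{mmW2}. Your closing caveat about existence and regularity of the minimizer is also apt, since the paper itself defers this issue to a remark rather than resolving it.
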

{\it Proof}. The proof is similar to the one given by Perelman \cite{P1}, see also \cite{Li2012, LiLi2015PJM}.  \hfill $\square$

\medskip
\begin{remark}{In the case of Riemannian manifolds or smooth Ricci or super Ricci flows, the Schauder regularity theory of nonlinear elliptic PDEs leads us to derive $u\in C^{2,\alpha}(M)$ for $\alpha\in(0,1)$. Then, an argument due to Rothaus \cite{Roth}
allows them to prove that $u$ is strictly positive and smooth. This yields that $v = -2\log u$ is also
smooth. It would be interesting to see what happens on 
RCD$(K,N)$ spaces and closed $(K, n, N)$-super Ricci flows. This suggest us to study the Schauder and De Giorgi-Moser-Nash regularity theory of nonlinear elliptic PDEs on RCD spaces and super Ricci flows on metric measure spaces.
}
\end{remark}

\vspace{5mm}

Xiang-Dong Li, State Key Laboratory of Mathematical Sciences, Academy of Mathematics and Systems Science, Chinese Academy of Sciences, No. 55, Zhongguancun East Road, Beijing, 100190, China,  
and 
 School of Mathematics, University of Chinese Academy of Sciences, Beijing, 100049, China. Email: xdli@amt.ac.cn

\end{document}